\tikzset{ 
table/.style={
  matrix of nodes,
  row sep=-\pgflinewidth,
  column sep=-\pgflinewidth,
  nodes={rectangle,text width=7em,align=center},
  text depth=1.25ex,
  text height=2.5ex,
  nodes in empty cells
},
%row 1/.style={nodes={fill=green!10,text depth=0.4ex,text height=2ex}},
%row 6/.style={nodes={text depth=0.4ex,text height=2ex}},
%column 1/.style={nodes={fill=green!10}},
}
\newtheorem{thm}{Theorem}[section]
\newtheorem{lemma}[thm]{Lemma}
\newtheorem{prop}[thm]{Proposition}
\newtheorem{defn}[thm]{Definition}
\newtheorem{example}[thm]{Example}
\newtheorem{remark}[thm]{Remark}
\newtheorem{assumption}[thm]{Assumption}
\numberwithin{equation}{section}
\newcommand{\bL}{\mathbb{L}}
\newcommand{\Z}{\mathbb{Z}}
\newcommand{\R}{\mathbb{R}}
\newcommand{\C}{\mathbb{C}}
\newcommand{\AI}{A_\infty}
\newcommand{\WT}[1]{\widetilde{#1}}
\newcommand{\one}{\mathbf{1}}
\begin{document}

\author[Hansol Hong]{Hansol Hong}
\address{Department of Mathematics \\ Yonsei University \\ 50 Yonsei-Ro \\ Seodaemun-Gu \\ Seoul 03722 \\ Korea} 
\email{hansolhong@yonsei.ac.kr}

\title{Maurer-Cartan deformation of Lagrangians}

\begin{abstract}
The Maurer-Cartan algebra of a Lagrangian $L$ is the algebra that encodes the deformation of the Floer complex $CF(L,L;\Lambda)$ as an $A_\infty$-algebra. 
We identify the Maurer-Cartan algebra with the $0$-th cohomology of the Koszul dual dga of $CF(L,L;\Lambda)$. Making use of the identification, we prove that there exists a natural isomorphism between the Maurer-Cartan algebra of $L$ and a suitable subspace of the completion of the wrapped Floer cohomology of another Lagrangian $G$ when $G$ is \emph{dual} to $L$ in the sense to be defined. In view of mirror symmetry, this can be understood as specifying a local chart associated with $L$ in the mirror rigid analytic space. We examine the idea by explicit calculation of the isomorphism for several interesting examples.

%
%we provide a new point of view on the Maurer-Cartan space of $L$ in relation with the Koszul duality.
\end{abstract}
\maketitle

\tableofcontents

%%%%%%%%%%%%%%%%%%%%%%%%%%%%%%%%%%%%%%%%%%%%%%%%%%%%%%%%%%%%%%%%%%

\section{Introduction}

Deformation of a Lagrangian submanifold has been a central problem in symplectic geometry. The geometric deformation of a Lagrangian $L$ is locally parametrized by $H^1 (L)$, and McLean's classical result \cite{Mc} showed that the deformation of a special Lagrangian is unobstructed in the sense that deformation parametrized by  $H^1 (L)$ automatically satisfies a special Lagrangian condition. 
Such a deformation problem draws more attention after Strominger-Yau-Zaslow \cite{SYZ96} asserts that the mirror complex manifold of a symplectic manifold should be obtained as a complexified deformation space of a special Lagrangian torus.

On the other hand, in view of homological mirror symmetry, a Lagrangian (together with some additional structure) may be identified as an object of the Fukaya category. In this spirit, one can instead study deformation of the endomorphism algebra of a Lagrangian regarded as an object of the $A_\infty$-category, which is, in fact, the core of Lagrangian Floer theory of the Lagrangian. 
Such a deformation may admit further nontrivial obstructions from holomorphic disks bounded by the Lagrangian. 
More specifically, for given a compact Lagrangian $L$, one can study deformation of the $A_\infty$-algebra $CF(L,L)$ following \cite{FOOO}. The deformation is parametrized by $b \in CF^1(L,L) \cong H^1(L)$ (using the canonical model) that solves the following nonlinear equation
\begin{equation}\label{eqn:mcintro}
m_0(1) + m_1 (b) + m_2 (b,b) + m_2(b,b,b)+ \cdots =0
\end{equation}
called the \emph{Maurer-Cartan} equation, and $b$ satisfying the equation kills the obstruction $m_0$ for $L$ being an well-defined object in the Fukaya category. The set of solutions, denoted by $\mathcal{MC} (L)$, is called the Maurer-Cartan space of $L$.

Through a series of papers \cite{CHL}, \cite{CHL-toric} and \cite{CHL_gl}, we propose a mirror construction by gluing via quasi-isomorphisms local mirrors obtained as Maurer-Cartan spaces of Lagrangians (or strictly speaking, weak Maurer-Cartan spaces to produce a Landau-Ginzburg mirror). It often happens that a single Lagrangian already has big enough deformation to produce a full mirror, especially when we deform the Floer theory of an immersed Lagrangian $L$ by generators from self-intersections. 
While the Maurer-Cartan deformation of immersed Lagrangians can be nicely formulated in the realm of  immersed Floer theory \cite{AJ}, their geometric deformation seems a very delicate problem that has not been fully understood.

Another benefit of using $\mathcal{MC} (L)$ is that the construction is essentially algebraic once the necessary part of the $A_\infty$-structure on the Floer complex is known. This enables us to extend the deformation into noncommutative directions without much further effort, yet find new interesting geometric phenomena (see \cite{CHL2}). In this regard, it is more natural to extract the \emph{Maurer-Cartan algebra} $A_L$ of $L$ from \eqref{eqn:mcintro} (see Definition \ref{def:mcalgell}) rather than the space, which supposedly serves as the ring of functions on $\mathcal{MC}(L)$.

The first half of the article aims to reformulate the mirror construction via $\mathcal{MC} (L)$ in terms of well-known homological algebra tools, bar/cobar (denoted by $B$/$\Omega$) construction. The upshot is to realize the Maurer-Cartan algebra $A_L$ as the Koszul dual of the $A_\infty$-algebra $CF(L,L)$. 

\begin{thm}[Proposition \ref{prop:mcalgequalbvvee} and Proposition \ref{prop:omevbvd}]\label{thm:kdmcalg}
Let $L$ be a compact graded Lagrangian in a symplectic manifold $M$ such that $CF^0 (L,L)$ is spanned by the unit class. Then the Maurer-Cartan algebra $A_L$ can be identified as the $0$-th cohomology of the dga $\mathcal{A}_L:=\hom_{CF(L,L)} (\Lambda,\Lambda)$ where $\Lambda$ becomes a $CF(L,L)$-module via the augmentation $CF(L,L) \to \Lambda \cong CF^0 (L,L)$ given as the projection. This dga can be computed equivalently as either $\Omega \left(CF(L,L)^\vee \right)$ or $(B\, CF(L,L))^\vee$, where $(-)^\vee$ is the topological dual of a normed vector space. 
\end{thm}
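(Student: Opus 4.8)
The statement bundles two identifications, and the plan is to treat them in turn: first, that $\mathcal{A}_L=\hom_{CF(L,L)}(\Lambda,\Lambda)$ is computed by the dualized bar construction, hence equals both $(B\,CF(L,L))^\vee$ and $\Omega(CF(L,L)^\vee)$; and second, that its $0$-th cohomology recovers the Maurer-Cartan algebra $A_L$ of Definition \ref{def:mcalgell}. Throughout write $A=CF(L,L)$, let $\epsilon\colon A\to\Lambda$ be the augmentation given by projection onto $CF^0(L,L)=\Lambda\cdot\mathbf{1}$ (this is where the hypothesis on $CF^0$ enters), and let $\bar A=CF^{\geq 1}(L,L)=\ker\epsilon$ be the augmentation ideal.

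For the first identification I would resolve $\Lambda$ as an $A$-module by the two-sided bar resolution $A\otimes T(\bar A[1])$, check that it is a semifree (cofibrant) resolution of $\Lambda$ via $\epsilon$, and compute $\hom_A(-,\Lambda)$ on it. Since $\hom_A\big(A\otimes(\bar A[1])^{\otimes n},\Lambda\big)=\big((\bar A[1])^{\otimes n}\big)^\vee$, the resulting complex is exactly the topological dual $(BA)^\vee$ of the bar coalgebra $BA=T^c(\bar A[1])$, with differential dual to the bar codifferential; this is the content of Proposition \ref{prop:omevbvd}. That $(BA)^\vee$ coincides with the cobar $\Omega(A^\vee)$ is then the standard interchange of the bar construction with linear duality: dualizing $T^c(\bar A[1])$ termwise gives the completed tensor algebra $\widehat T\big((\bar A[1])^\vee\big)=\widehat T\big(\bar A^\vee[-1]\big)$, which is precisely the completed cobar $\Omega(A^\vee)$, the codifferential of $BA$ turning into the cobar differential. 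Here the topological dual $(-)^\vee$ produces the product/completion, so the two constructions agree on the nose rather than merely up to completion.

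For the second identification I would exploit the grading forced by the hypothesis. Because $\bar A$ is concentrated in cohomological degrees $\geq 1$, the bar coalgebra $BA$ lives in degrees $\geq 0$, so its dual $\mathcal{A}_L$ lives in degrees $\leq 0$; in particular $\mathcal{A}_L^{1}=0$ and therefore $H^0(\mathcal{A}_L)=\mathcal{A}_L^0/\operatorname{im}\big(d\colon\mathcal{A}_L^{-1}\to\mathcal{A}_L^0\big)$ is a quotient. A degree count identifies $\mathcal{A}_L^0$ with the completed free associative algebra $R:=\widehat T\big((CF^1)^\vee\big)$ on variables $x_i$ dual to a basis $\{e_i\}$ of $CF^1(L,L)$ (only all-$A^1$ tensor words survive in degree $0$), and identifies $\mathcal{A}_L^{-1}$ with $R\,\widehat{\otimes}\,(CF^2)^\vee\,\widehat{\otimes}\,R$ (words with exactly one $A^2$-letter). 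The remaining point, which is Proposition \ref{prop:mcalgequalbvvee}, is to match the differential with the Maurer-Cartan equation: forming the universal element $b=\sum_i e_i\otimes x_i\in CF^1\,\widehat{\otimes}\,R$ and the universal obstruction $\sum_{n\geq 0}m_n(b^{\otimes n})=\sum_f f\otimes P_f$ with $f$ ranging over a basis of $CF^2(L,L)$, one checks that the dual differential sends $f^\vee\mapsto P_f$. Consequently $\operatorname{im}(d)$ is the closed two-sided ideal $I$ generated by the components $P_f$, i.e. the Maurer-Cartan ideal, and $H^0(\mathcal{A}_L)=R/I=A_L$.

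The main obstacle is analytic rather than formal. Over the Novikov field $\Lambda$ every construction in sight is infinite — the bar complex is an infinite direct sum, its dual an infinite product, and the universal obstruction $\sum_n m_n(b^{\otimes n})$ an infinite series — so the argument only makes sense after fixing the right topology: the $(-)^\vee$ must be the topological dual of a normed space, the tensor products must be completed accordingly, and one must verify that the energy (valuation) filtration makes $\sum_n m_n(b^{\otimes n})$ converge and the ideal $I$ closed, so that $R/I$ is the intended completed object. I expect the bulk of the work to be checking that these completions are mutually compatible across the three descriptions of $\mathcal{A}_L$, with the Koszul sign bookkeeping in the bar-to-cobar dualization a secondary but necessary chore.
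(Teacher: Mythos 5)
Your proposal is correct and follows essentially the same route as the paper: the identification of $\hom_{CF(L,L)}(\Lambda,\Lambda)$ with $(B\,CF(L,L))^\vee$ via the (implicit) bar resolution is Proposition \ref{prop:evequalsbv}, the termwise dualization $T^c(\bar A[1])^\vee\cong$ completed tensor algebra is Lemma \ref{lem:bartvhat1} and Proposition \ref{prop:omevbvd}, and the matching of the degree $(-1)\to 0$ differential with the universal Maurer--Cartan obstruction $\sum_n m_n(b^{\otimes n})$ is exactly the paper's proof of Proposition \ref{prop:mcalgequalbvvee}. The only point to sharpen is that the topological dual of the bar coalgebra is not the full completed tensor algebra $\widehat T$ but the subspace $\overline{T}$ of series supported in finitely many degrees with valuations bounded below, which is precisely what makes the image of the differential coincide with the closed ideal $\langle\langle f_1,\dots,f_{N'}\rangle\rangle$ of \eqref{eqn:glfilhlfirst}.
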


Abstractly, the above can be understood as an adaption of the result of Efimov-Lunts-Orlov \cite{ELO} to the $T$-adic setting, which tells us that the derived deformation of an $A_\infty$-algebra is represented by the Koszul dual algebra.
For an $A_\infty$-algebra $A$ over a base ring $\Bbbk$, the dga $\hom_A (\Bbbk,\Bbbk)$ is referred to as the Koszul dual, following \cite{LPWZ}.
There are many different versions of Koszul duals in different generalities, and the one presented here is most adapted to the filtered $A_\infty$-setting. In particular, we use the topological dual as mentioned in the statement, and this is closely related with convergence in $T$-adic topology.

Observe that \eqref{eqn:mcintro} is an infinite sum in general. Therefore any construction rooted from the Maurer-Cartan deformation (of a compact Lagrangian) is necessarily accompanied with the convergence issue. In Floer theory, it is usually handled by introducing the field $\Lambda$ with a non-Archimedean valuation, recording the areas of contributing holomorphic disks as exponents of $T$. Moreover, $b$ in \eqref{eqn:mcintro} should have a positive $T$-adic valuation to ensure the convergence.
Accordingly, the Koszul dual will be carried out over $\Lambda$-coefficient in our setting.
%, keeping track of the $T$-adic topology induced by the valuation on $\Lambda$ and modules over it. 
This may seem as merely changing the coefficient ring, but we will see that the areas of holomorphic disks provide crucial information about the location and the size of $\mathcal{MC}(L)$ in the mirror space. 
In particular, it turns $\mathcal{MC}(L)$ into an analytic neighborhood in the mirror space for immersed $L$, not just a formal one that one would obtain over $\C$ . 

Under some additional assumptions, an $A_\infty$-algebra and its Koszul dual dga share important homological algebraic properties such as equivalences between certain module categories over these algebras. 
We expect that purely algebraic results known for Koszul duality may lead to some nontrivial geometric observation on Lagrangian Floer theory and mirror symmetry.
%, for e.g., (local) homological mirror symmetry statement in sufficient  generality.
There is an interpretation of the mirror symmetry via Koszul duality between sheaves constructed from Lagrangian torus fibers. See \cite{Tu-FM} for more details.

In the second half of the article, we focus on a pair of Lagrangians $(G,L)$ in a Liouville manifold $M$ such that their Floer complex (over $\Lambda$)  has  a nontrivial component of rank 1 in degree 0, only. We will view these Lagrangians  as objects in the wrapped Fukaya category defined over $\Lambda$.
By introducing a semi-simple coefficient ring $\Bbbk_\Lambda:=\oplus_{i=1}^k \Lambda \langle \pi_i \rangle $, our setting includes the case when $G=\cup_{i=1}^r G_i$ and $L=\cup_{i=1}^r L_i$ consist of the same number of irreducible components such that $|G_i \cap L_j| = \delta_{ij}$. For given such a pair $(G,L)$, $L$ determines an augmentation $\epsilon_L: CF(G,G) \to \Bbbk_{\Lambda}$ by identifying $CF(G,L) \cong \Bbbk_{\Lambda}$ in $m_2 :CF(G,G) \otimes CF(G,L) \to CF(G,L)$. (Here, $CF(G,G)$ is the wrapped Floer cohomology of $G$ in $\Lambda$-coefficient,)

The key observation is that the augmentation $\epsilon_L$ can be regarded as specifying a point in the ``spec" of $CF(G,G)$. To make more sense of it, suppose that $CF(G,G)$ is concentrated at degree 0 so that it is simply an algebra (over $\Bbbk_\Lambda$) for degree reason.  If $G$ generates the Fukaya category of $M$, then $\check{M}:=``\mathrm{Spec}"  CF(G,G)$ (or more precisely a would-be space whose function ring is $CF(G,G)$) should give a mirror of $M$. Pascaleff \cite{Pas_CY} considered a similar situation in which $M$ is a $\mathrm{Log}$ Calabi-Yau surface, and $G$ a Lagrangian section. 

On the other hand, in spirit of SYZ mirror symmetry, the mirror space $\check{M}$ is supposed to be a deformation space of a Lagrangian, or the moduli of Lagrangians (in some loose sense). Hence each point of $\check{M}$ should correspond to some Lagrangian. In this point of view, it is natural to think that $L$ sits in $\check{M}$ as the point corresponding to the maximal ideal $\ker \epsilon_L$ in of the algebra $CF(G,G)$.

Having identified $L$ with a point in the mirror $\check{M}$ (constructed from the noncompact generator $G$), the next question to ask is how to describe its neighborhood in $\check{M}$. Recall that we already have an intrinsic deformation space of $L$, which is the Maurer-Cartan space $\mathcal{MC} (L)$. The following theorem answers how $\mathcal{MC}(L)$ sits in $\check{M}$ as a neighborhood of the point corresponding to $L$ or dually, it describes the relation between $CF(G,G)$ and the Maurer-Cartan algebra $A_L$. 

 \begin{thm}[Proposition \ref{prop:algebraickappa1}, Theorem \ref{thm:mainkd}]\label{thm:koszulintro}
Suppose $L$ satisfies the conditions in Theorem \ref{thm:kdmcalg}.
If two Lagrangians $L=\oplus_{i=1}^r L_i $ and $G=\oplus_{i=1}^r G_i$ in a Liouville manifold $M$ satisfy $ |G_i \cap L_j | = \delta_{ij}$, then there is a natural $A_\infty$-algebra homomorphism
$$ \kappa: CF(G,G) \to \mathcal{A}_L,$$
where $\mathcal{A}_L$ is the Koszul dual of $CF(L,L)$.

If $G$ generates $L$ in the wrapped Fukaya category of $M$, and  $HF^{>0} (G,G) =0$, then $\kappa$ induces the isomorphism between a certain proper  subalgbera of the formal completion of $HW^0 (G,G)$ and the Maurer-Cartan algebra $A_L$.

\end{thm}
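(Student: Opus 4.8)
The plan is to establish the two assertions separately: first build $\kappa$ by hand out of the bimodule structure on $CF(G,L)$, and then upgrade its degree-zero part to the claimed isomorphism using generation together with $T$-adic completion.

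\emph{Construction of $\kappa$.} The starting point is that $CF(G,L)$ is an $A_\infty$-bimodule over the pair $\left(CF(G,G),\,CF(L,L)\right)$, with operations $m_{k|1|l}$ counting holomorphic polygons carrying $k$ inputs on $G$, one corner input from $G$ to $L$, and $l$ inputs on $L$. The hypothesis $|G_i\cap L_j|=\delta_{ij}$ forces $CF(G,L)\cong\Bbbk_\Lambda$ to be rank one in degree $0$, and by the very definition of $\epsilon_L$ the right $CF(L,L)$-action turns $CF(G,L)$ into the augmentation module $\Lambda$. I would then define $\kappa$ by transporting the left action into module endomorphisms: its $k$-th component sends $(g_1,\dots,g_k)\in CF(G,G)^{\otimes k}$ to the endomorphism of $\Lambda$ whose value on $(x,a_1,\dots,a_l)$ is
\[ \kappa_k(g_1,\dots,g_k)(x,a_1,\dots,a_l)=m_{k|1|l}(g_1,\dots,g_k,x,a_1,\dots,a_l). \]
The $A_\infty$-bimodule relations for $CF(G,L)$ are precisely the assertion that the $\kappa_k$ assemble into an $A_\infty$-homomorphism into the strictly associative endomorphism dga $\mathcal{A}_L=\hom_{CF(L,L)}(\Lambda,\Lambda)$; verifying this is a bookkeeping exercise once the operator sign conventions are fixed, and naturality in $(G,L)$ is automatic since $\kappa$ is built from the canonical bimodule.

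\emph{Reduction of the isomorphism statement.} For the second assertion I would first use $HF^{>0}(G,G)=0$ to pass to a minimal model: $CF(G,G)$ is then quasi-isomorphic to its cohomology $HW^0(G,G)$, an ordinary (a priori noncommutative) $\Bbbk_\Lambda$-algebra concentrated in degree $0$, while $A_L=H^0(\mathcal{A}_L)=\mathrm{Ext}^0_{CF(L,L)}(\Lambda,\Lambda)$ by Theorem \ref{thm:kdmcalg}. Thus the whole content reduces to analyzing the degree-zero map $H^0(\kappa):HW^0(G,G)\to A_L$, and the real work is to show it becomes an isomorphism onto the right subalgebra after completion.

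\emph{Using generation.} The crux is that $G$ generates $L$. Reading $A_L=H^0\,\mathrm{REnd}_{CF(L,L)}(\Lambda)$, I would compute the right-hand side through a resolution of the augmentation module $\Lambda\cong CF(G,L)$ built from $G$: generation produces a twisted complex of copies of $G$ quasi-isomorphic to $L$, and applying the contravariant Yoneda functor $\hom(-,L)$, which sends $G\mapsto CF(G,L)=\Lambda$ and is full on the subcategory split-generated by $G$, identifies derived endomorphisms of $\Lambda$ over $CF(L,L)$ with endomorphisms of $G$. This is the mechanism making $\kappa$ a quasi-isomorphism onto the relevant piece, and it is the step I expect to be the main obstacle: $\Lambda$ is \emph{not} perfect over $CF(L,L)$ (it is the residue-field-type module, whose $\mathrm{Ext}$-algebra is the ``large'' Koszul dual), so the resolution by $G$ is genuinely infinite and one obtains the identification only after a suitable completion rather than on the nose.

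\emph{Completion and the proper subalgebra.} Finally I would match the topologies. The Maurer-Cartan parameter $b$ has strictly positive $T$-adic valuation, so $A_L$ consists of the $T$-adically convergent series in the Koszul-dual generators, whereas $HW^0(G,G)$ is the honest function ring of $\check M=``\mathrm{Spec}"\,HW^0(G,G)$. Completing $HW^0(G,G)$ along the maximal ideal $\ker\epsilon_L$ yields the formal functions near the point corresponding to $L$, and $H^0(\kappa)$ extends continuously to this completion; one then has inclusions $HW^0(G,G)\hookrightarrow A_L\hookrightarrow\widehat{HW^0(G,G)}$, where $A_L$ is cut out inside the full formal completion by the convergence constraints dictated by the areas of the contributing polygons. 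Identifying this \emph{proper}, Tate-algebra-type subalgebra with $A_L$, and checking that the extended $\kappa$ restricts to an isomorphism onto it, is the remaining and most delicate bookkeeping, since it requires tracking how the disk areas enter the structure constants on both sides.
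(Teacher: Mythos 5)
Your construction of $\kappa$ is essentially the paper's: Proposition \ref{prop:algebraickappa1} defines $\kappa_l$ by dualizing the $(\hom(G,G),\hom(L,L))$-bimodule operations on $CF(G,L)\cong\Bbbk_\Lambda$, via $\sum_k m_{l+k+1}(Z_1,\dots,Z_l,P,\tilde b,\dots,\tilde b)=P\,\kappa_l(Z_1,\dots,Z_l)$, and verifies the $A_\infty$-homomorphism property by applying the $A_\infty$-relations to the tuple $(Z_1,\dots,Z_l,P,e^{\tilde b})$. That part of your proposal is sound, though the sign verification you defer is the actual content of the proof there.

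For the isomorphism statement there is a genuine gap. First, a concrete error: $HF^{>0}(G,G)=0$ does \emph{not} let you replace $CF(G,G)$ by an algebra concentrated in degree $0$ — the wrapped cohomology may be nonzero in negative degrees, and these degrees do enter the bar/cobar constructions (the paper's double complexes explicitly carry $\bar W_{-1},\bar W_{-2},\dots$). One must \emph{prove} that they do not contribute to $H^0$ of the Koszul dual of the Koszul dual; this is exactly the degree bookkeeping in the $E_2$-page of Proposition \ref{cor:formalversionk}. Second, and more importantly, the central step — identifying $H^0$ of $\mathrm{REnd}_{CF(L,L)}(\Lambda)$ (equivalently $\hat A_L=H^0\bigl((BV_\Lambda)^{\sharp_\Lambda}\bigr)$) with the $\mathfrak m$-adic completion $\widehat{HW^0(G,G)}$ — is precisely what you flag as "the main obstacle'' and "the most delicate bookkeeping'' without carrying it out. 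The paper's route is different from your "resolve $\Lambda$ by copies of $G$'' picture: it uses the Yoneda quasi-isomorphism in the opposite direction, $\hom(L,L)\to\hom_{\hom(G,G)}(\Bbbk,\Bbbk)\cong{}^{\sharp}(BW)$ (valid because $G$ generates $L$), then takes the bar construction and dualizes to express $\hat A_L$ as the \emph{double} Koszul dual of $W=\hom(G,G)$; Proposition \ref{prop:unfiltereddercompl} identifies this with the completed cobar construction $\hat\Omega BW_\Lambda$ by comparing two filtered double complexes, and Proposition \ref{cor:formalversionk} runs the spectral sequence of $\hat\Omega BW_\Lambda$ (contracting the vertical differential with an explicit homotopy) to land on $\varprojlim_l H^0(W_\Lambda)/\mathfrak m^l$. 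Without some version of this computation your argument only asserts, rather than establishes, that the completed $\kappa$ is an isomorphism; the final identification of $A_L$ as the convergent-series subalgebra then follows as you describe, but it rests on the unproven step.
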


See Proposition \ref{prop:unfiltereddercompl} and Proposition \ref{cor:formalversionk} for analogous statements in the unfiltered setting.
We remark that the two statements \cite[Theorem 2 and Corollary 52]{EL-duality} combined together proves the unfiltered version of the above when $H^1(L_i)=0$ for each $i$. 

The homomorphism $\kappa=\{\kappa_l\}_{l \geq 1}$ admits an explicit formula
$$\sum m_k (Z_1,\cdots, Z_l,P, \tilde{b},\cdots, \tilde{b}) = P \, \kappa_l (Z_1,\cdots, Z_l),$$
for $Z_1,\cdots,Z_l \in CF(G,G)$, where $P$ generates $CF(G,L)$ and $\tilde{b} = \sum x_i X_i$ is the formal linear combination of all generators $X_i$ of $CF(L,L)$ except the unit. $m_k$ on the left hand side can be expanded using \eqref{eqn:earconv} so that it becomes a sum of $m_k$'s having usual inputs from the Floer complexes multiplied by monomials in $x_i$. Making use of its compatibility with algebraic structures, $\kappa$ can be explicitly computed in many cases, at least on the level of cohomology. 

We provide detailed computations for several important classes of examples in Section \ref{sec:locsyzapp1}.
Observe that two typical examples of such a pair $(G,L)$ arises in 
\begin{itemize}
\item $M$, a cotangent bundle with a cotangent fiber $G$ and the zero section $L$, 
\item $M$ with a SYZ fibration where $G$ is a Lagrangian section and $L$ a torus fiber. 
\end{itemize}
Etg\"{u} and Lekili \cite{EL-koszul} have given a nice explanation on the former case related with Koszul duality, and analyzed in detail the case of plumbing of spheres in this context. Also Li \cite{Li,Li2,Li3} found such a duality pattern in many Weinstein manifolds beyond plumbings.

Our focus is more on the latter one, and in particular, the case when the compact Lagrangian $L$ has nontrivial $H^1 (L)$ so that its Maurer-Cartan deformation is nonempty. This is an essential reason why we need to deal with the convergence issue more carefully. Namely, the elements in $H^1 (L)$ can be inserted arbitrarily many times to $A_\infty$-operations without changing the degree of the output. Keeping track of the degree changes in algebraic constructions, $H^1 (L)$ is responsible for the $0$-th cohomology of $\mathcal{A}_L$, the Maurer-Cartan algebra $A_L$.

With help of the theorem, we shall make an atlas of the mirror rigid analytic varieties obtained from the generating Lagrangian $G$ of several local SYZ models, locating the Maurer-Cartan deformation space of important classes of Lagrangians (as well as torus fibers) in the mirror space. This procedure takes the opposite way to gluing construction in \cite{HL-gokova} or \cite{CHL_gl} in the sense that we describe local charts in terms of \emph{global coordinates} on the mirror given $G$.

The examples include the cotangent bundle of $T^n$ (or a trivial SYZ fibration), a conic bundle with a nodal torus fiber, and a deformed conifold. Purpose of the last example is to examine the noncommutative (quiver) situation.
Interestingly, we will see that immersed Lagrangians (such as nodal torus fiber) tend to have  bigger Maurer-Cartan neighborhoods than embedded ones (such as torus fibers). Intuitively, this means: the more singular the Lagrangian is, the more Lagrangians it can produce by deformation. It is, of course, plausible since one can smooth out singularity to make the Lagrangian less singular. 

On the contrary, torus fibers have very small Maurer-Cartan neighborhood. Note that different torus fibers do not intersect. This implies that their Maurer-Cartan neighborhoods do not overlap, since the Maurer-Cartan deformation is based on Lagrangian intersection Floer theory. 
For this reason, one should take into account every fiber to obtain a full deformation space of Lagrangian torus fibers. This is along the same line of thoughts as in the family Floer program \cite{Ab-famFl1} or the family of valuations in \cite{FOOO10b}\footnote{There are some cases such as toric manifolds where the symplectic information is concentrated only at critical fibers, in which case it may be enough to look at their weak Maurer-Cartan deformation to construct a mirror \cite{CHL-toric}.}.

Here are some future directions. One potential application of Theorem \ref{thm:koszulintro} is the computation of the Maurer-Cartan algebra of $L$ from the wrapped Floer cohomology of its dual $G$. 
Usually, the Maurer-Cartan algebra involves nonlinear relations from higher $m_k$-operations that makes it hard to compute by direct count of holomorphic disks. 
On the other hand, in many examples including conic bundles, the wrapped Floer cohomology is concentrated at degree $0$, and is relatively easy to compute by explicit perturbations. Also there is a nice generation result of the wrapped Fukaya category provided by \cite{GPS} and \cite{CRGG}. 
 
It would be also interesting to extend the result into weak Maurer-Cartan deformation and its associated Landau-Ginzburg mirror. In \ref{subsec:popex1}, we shall perform some relevant construction for a certain weakly unobstructed immersed circle in the pair-of-pants, which may give us a hint for a more general formulation.

\begin{center}
{\bf Notations}
\end{center}

We will work over several different coefficient rings depending on the geometric situation. The most frequently used is the universal Novikov field over $\C$ defined as
\begin{equation}\label{eqn:univnovfield}
\Lambda := \left\{\sum_{i=0}^\infty a_i T^{\lambda_i} \mid a_i \in\C, \lim_{i \to \infty} \lambda_i  +\infty \right\}.
\end{equation}
We also use the following subrings of $\Lambda$ often:
$$
\Lambda_0 := \left\{\sum_{i=0}^\infty a_i T^{\lambda_i} \in \Lambda \mid \lambda_i\ge 0  \right\}, \qquad
\Lambda_+ := \left\{\sum_{i=0}^\infty a_i T^{\lambda_i}\in \Lambda \mid \lambda_i>0  \right\}.
$$
Finally, the following is the set of elements in $\Lambda_0$ which has multiplicative inverses:
$$\Lambda_\mathrm{U}:=\{a_0 + \lambda: a_0 \in \C^\times \textrm{ and } \lambda \in \Lambda_+ \},
$$
which can be thought of as unitary elements in $\Lambda$.
In addition, we will also use their semi-simple generalizations, whose definition is to be given later.

\begin{center}
{\bf Aknowledgement}
\end{center}
I would like to thank Junwu Tu for explanation on the important constructions in homological algebra relevant to the paper. I am grateful to Cheuk Yu Mak and Cheol-Hyun Cho for valuable discussions. Finally, I would like to thank the anonymous referee for his careful reading and constructive comments. 
  This work   is supported by the National Research Foundation of Korea (NRF) grant funded by the Korea government (MSIT) (No. 2020R1C1C1A01008261 and No.2020R1A5A1016126).

\section{Maurer-Cartan space of a Lagrangian}\label{sec:mcspaceofLag}

We briefly recall Lagrangian Floer theory following \cite{FOOO}, especially the Maurer-Cartan deformation of a Lagrangian, which is the main subject in the paper. As we are to relate this with an endomorphism of a noncompact Lagrangians in later applications, a short review on wrapped Floer theory will be also given. Along the way, we present the precise $T$-adic setting for both of Floer theories that we work with throughout the paper. 

\subsection{Fukaya $\AI$-algebra and wrapped Fukaya category}\label{subsec:fukaialg}
Let us start with a compact unobstructed Lagrangian $L$ in a symplectic manifold $M$. We allow $L$ to be immersed with clean intersections. The unobstructedness of $L$ means that the evaluation image of the moduli space of holomorphic discs bounding $L$ with one marked point vanishes, which results in the absence of $m_0$-term in the $\AI$-structure below. For instance, if there is no nonconstant disc bounding $L$, then $L$ is automatically unobstructed. We will use the canonical model for the Floer complex $CF(L,L;\Lambda)$ that is modeled on $H^\ast (L;\Lambda)$. When $L$ is immersed, we additionally include two copies of cocycles on the self-intersection loci into $CF(L,L;\Lambda)$. Each of them spans rank-1 $\Lambda$-vector space in $CF(L,L;\Lambda)$ whose elements will be called \emph{immersed generators}. 

$CF(L,L;\Lambda)$ has a $\C$-vector subspace $CF(L,L;\C)$ generated by $H^\ast (L;\C)$ and immersed generators (over $\C$) so that $CF(L,L;\Lambda)=CF(L,L;\C) \otimes \Lambda$. There exists a non-Archimedean valuation $\nu$ on $CF(L,L;\Lambda)$ which is completely determined by setting it to be zero on  $CF(L,L;\C)$. See \ref{subsec:underlyingmods}.

\begin{thm}[\cite{FOOO},\cite{AJ}]\label{thm:FOOOmain}
There exist a sequence of multilinear maps 
$$m_k : CF(L,L;\Lambda)^{\otimes k } \to CF(L,L;\Lambda)$$ 
for $k \geq 1$ which satisfies
\begin{equation}\label{eqn:airelforLfooo}
\sum_{i, k_1+k_2 = k+1} (-1)^\ast m_{k_1} (X_1, \cdots, X_i, m_{k_2} (X_{i+1}, \cdots, X_{i+k_2}), X_{i+k_2 +1} ,\cdots, X_k),
\end{equation}
where $\ast = (-1)^{|X_1|' + \cdots + |X_i|'}$, and $|X_i|'$ denotes the shifted degree $|X_i|' = |X_i|-1$.
\end{thm}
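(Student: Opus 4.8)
The plan is to construct the operations $m_k$ as (virtual) counts of pseudoholomorphic polygons bounding $L$ and then transfer the resulting structure to the canonical model modeled on $H^\ast(L;\Lambda)$. First I would fix a compatible almost complex structure $J$ and, for each relative homotopy class $\beta \in \pi_2(M,L)$ and each $k$, consider the moduli space $\moduli_{k+1}(\beta)$ of $J$-holomorphic maps from the disk with $k+1$ cyclically ordered boundary marked points in class $\beta$, modulo automorphisms. With $\ev_0,\dots,\ev_k$ the boundary evaluation maps, the operation would be defined by
$$m_k(X_1,\dots,X_k) = \sum_{\beta} T^{\omega(\beta)}\, (\ev_0)_\ast \bigl(\ev_1^\ast X_1 \wedge \cdots \wedge \ev_k^\ast X_k\bigr),$$
so that the $T$-exponent records the symplectic area $\omega(\beta)$. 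Convergence in the $T$-adic topology is then guaranteed by Gromov compactness, which bounds the number of contributing classes below any fixed energy.

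The relation \eqref{eqn:airelforLfooo} expresses that the codimension-one boundary of the compactified moduli spaces $\overline{\moduli}_{k+1}(\beta)$ is exhausted by once-broken configurations. Concretely, by Gromov compactness the boundary strata are disk-splittings in which a polygon breaks into two polygons sharing a single boundary node, with areas summing to $\omega(\beta)$. Applying Stokes' theorem to the evaluation maps over $\overline{\moduli}_{k+1}(\beta)$ and identifying each boundary stratum with a fiber product $\moduli_{k_1+1}\times_{L}\moduli_{k_2+1}$ reproduces exactly the composite terms $m_{k_1}(\cdots,m_{k_2}(\cdots),\cdots)$ over all $k_1+k_2=k+1$, while the Koszul signs track the chosen orientations together with the induced boundary orientation; the total contribution of the boundary then vanishes, giving the $\AI$-relation.

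The hard part will be transversality: for generic data $\moduli_{k+1}(\beta)$ is neither smooth nor of the expected dimension, so the naive counts above are ill-defined and the boundary analysis cannot be carried out literally. Following \cite{FOOO}, I would endow each $\overline{\moduli}_{k+1}(\beta)$ with a Kuranishi structure and select a coherent system of virtual perturbations compatible with the gluing and forgetful maps, producing virtual fundamental chains whose fiber-product boundary identities yield \eqref{eqn:airelforLfooo} at the chain level; arranging the perturbations to respect the unit is what produces the (homotopy) unitality built into $CF^0$. Because $L$ is allowed to be immersed with clean intersections, boundary marked points may additionally pass through self-intersection loci, and I would invoke the framework of Akaho--Joyce \cite{AJ} to set up the corresponding Kuranishi structures and to confirm that the same boundary dictionary governs the immersed generators.

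Finally, to reach the canonical model as in the statement, I would apply the homological perturbation lemma to the chain-level $\AI$-algebra, choosing a homotopy retraction onto $H^\ast(L;\Lambda)$ enlarged by the immersed generators. The transferred operations are given by summing, over planar rooted trees, the chain-level $m_k$ at the vertices against the chosen homotopy on internal edges; the $T$-adic valuation $\nu$ ensures that each such sum converges. This produces the desired $m_k$ on $CF(L,L;\Lambda)$ satisfying \eqref{eqn:airelforLfooo}, completing the construction.
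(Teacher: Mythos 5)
This theorem is quoted from \cite{FOOO} and \cite{AJ} without proof in the paper, and your outline — moduli of holomorphic polygons weighted by $T^{\omega(\beta)}$, Gromov compactness for $T$-adic convergence, the codimension-one boundary strata producing the $\AI$-relations, Kuranishi/virtual perturbations for transversality, Akaho--Joyce for the immersed case, and homological perturbation to the canonical model on $H^\ast(L;\Lambda)$ — is precisely the construction those references carry out. Your sketch is a correct roadmap of the same approach; the only point worth flagging is that the absence of the $m_0$ (disk-bubbling) term in the stated relation relies on the standing unobstructedness assumption, which your boundary analysis should invoke explicitly.
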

A vector space $V$ with a family of multilinear operations $\{m_k : V^{\otimes k} \to V\}_{k \geq 1}$ satisfying \ref{eqn:airelforLfooo} is called an $\AI$-algebra, and hence the theorem simply tells us that $CF(L,L;\Lambda)$ admits an $\AI$-structure.
If $L$ is obstructed, \eqref{eqn:airelforLfooo} should additionally involve a certain nonzero cohomology class usually called $m_0(1)$. Each $m_k$ can be further decomposed as
\begin{equation}\label{eqn:mkbeta}
m_k = \sum_{\beta \in \pi_2 (M,L)} m_{k,\beta} T^{\omega(\beta)}
\end{equation}
which can be an infinite sum, but Gromov compactness ensures that this operation converges over $\Lambda$.

Now we recall the wrapped Floer cohomology first defined in \cite{AS_w}.
Let $M$ be a Liouville manifold with an exact symplectic form $\omega = d \theta$. Let us take a Liouville vector field $Z$ such that $\iota_Z \omega = \theta$. We require $M$ to be symplectomorphic to $[1,\infty) \times \Sigma$ outside a compact region $M^{in}$ for some contact hypersurface $\Sigma$. More precisely, $[1,\infty) \times \Sigma$ is equipped with $\omega=d(r \alpha)$ where $\alpha:=\theta|_{\Sigma}$ and $r$ is the standard coordinate on $(1,\infty)$. A Lagrangian $L$ is said to be conical at infinity if $L$ intersects $\Sigma$ transversally and $\Theta|_L \equiv 0$ on this region.

For given an exact Lagrangian $L$ conical at infinity, we define its (self-)wrapped Floer complex as follows. We take a linear Hamiltonian $H : M \to \R$ in the sense that $H=r$ on the region symplectomorphic to $(1,\infty) \times \Sigma$. 
Then the wrapped Floer complex $CW (L,L)$ is defined to be
$$CW(L,L) = \oplus_{w=1}^{\infty} CF(L,L;wH )[q]$$
where $CF(L,L;wH)$ is formally generated over $\C$ by time-1 Hamiltonian chords of $wH$ from $L$ to itself, and $q$ is a formal variable with $\deg q = -1$.
The operations
\begin{equation}\label{eqn:wrmkself}
m_k: CW (L,L)^{\otimes k} \to CW (L,L)
\end{equation}
are defined by counting pseudo-holomorphic maps that carries some additional data about continuation between Hamiltonians with different slopes $w$. The $m_1$-cohomology of $CW (L,L)$ denoted by $HW(L,L)$ is called the wrapped Floer cohomology of $L$. We refer readers to \cite[Section 2,3]{AS_w} for detailed construction. See also \cite[Section 3]{SR} for the construction in the monotone setting. We remark that unlike \eqref{eqn:mkbeta}, the output of \eqref{eqn:wrmkself} is a finite sum of intersection points.

More generally, for a $(k+1)$-tuple of a Lagrangian $L_0, \cdots, L_{k}$, one can analogously define
\begin{equation}\label{eqn:mkonwrappedfuk}
m_k: CW(L_0,L_1) \otimes \cdots \otimes CW(L_{k-1}, L_k) \to CW(L_0, L_k).
\end{equation}
where $CW(L_i,L_j) = \oplus_{w=1}^{\infty} CF(L_i,L_j;wH )[q]$ and $CW(L_i,L_j; wH)$ is the $\mathbb{C}$-vector space generated by $wH$-Hamiltonian chords from $L_i$ to $L_j$.
The wrapped Fukaya category $\mathcal{W}Fuk (M)$ is then defined as the $\mathbb{C}$-linear category whose objects are exact Lagrangians conical at infinity and morphism space between $L$ and $L'$ is given by $\hom_{\mathcal{W}Fuk (M)} (L,L')=CW(L,L')$. $\mathcal{W}Fuk (M)$ is naturally an $\AI$-category with operations $m_k$ \eqref{eqn:mkonwrappedfuk}.

\subsubsection*{Wrapped Fukaya category over $\Lambda$}
For later use, we extend the coefficient field for the wrapped Fukaya category to the Novikov field $\Lambda$ so that the $\AI$-operations involve the powers of $T$, recording the areas of contributing disks. We first set
$$CW(L_i,L_j;\Lambda) := \oplus_{w=1}^{\infty} CF(L_i,L_j;wH;\Lambda )[q]$$
where everything is the same as before except that $CF(L_i,L_j;wH;\Lambda )$ is now $\Lambda$-vector space freely generated by $wH$-Hamiltonian chords from $L_i$ to $L_j$. $m_k$-operations are defined analogously to \eqref{eqn:mkbeta}, that is, we count the pseudo-holomorphic map $u:S \to M$ contributing to $m_k$ with the weight $T^{E_{top} (u)} \in \Lambda$ for
$$E_{top} (u):=\int_S u^\ast \omega - d (u^\ast H \wedge \gamma).$$ 
Here, $\gamma$ is a part of the data contained in the disk moduli which encodes the interpolation among linear Hamiltonians with different slopes associated with inputs and outputs. In fact, we do not actually need its precise definition here, since in the exact setting, one can recover the weight simply from the actions of inputs and outputs as \cite[(7.9)]{AS_w}. Namely, if $u$ takes its inputs $P_i \in CF(L_{i-1},L_i ; w_i H )$ for $ 1 \leq i \leq k$ and the out put $Q \in CF(L_0, L_k; w_0 H)$, then Stokes formula tells us that 
\begin{equation}\label{eqn:topenergydist}
E_{top} (u) = \mathcal{A}_{w_0 H} (Q) - \sum_{i=1}^{k} \mathcal{A}_{w_i H} ( P_i),
\end{equation} 
where 
$$\mathcal{A}_{wH} (P)= \int_0^1 - P^\ast \theta + wH(P(t)) dt + f_j (P(1)) - f_i (P(0))$$
for a Hamiltonical chord $P \in CF(L_i,L_j;wH)$ and the primitives $f_i, f_j$ satisfying $d f_i = \theta|_{L_i}$. 
Sometimes it is useful to identify $P$ as an intersection point $P \in \phi_{wH} ( L_i) \cap L_j$ (abusing notation), in which case the valuation can be written as
\begin{equation}\label{eqn:auxvalH}
\mathcal{A}_{wH} (P):=- \iota_{X_{wH}} \theta (P)+f_j (P) - f_i (P).
\end{equation}
Notice that $ \iota_{X_{wH}} \theta +  f_L$ is a primitive of $\phi_{wH} (L)$, see for e.g., \cite[Lemma 3.2]{Gao}.

Let us denote by $\mathcal{W}Fuk^{\Lambda} (M)$ the category with the same set of objects as $\mathcal{W}Fuk^{\Lambda} (M)$, but 
$$ \hom_{\mathcal{W}Fuk^{\Lambda} (M)} (L,L') :=CW(L,L';\Lambda).$$
The above discussion gives the natural extension of $A_\infty$-operations on $\mathcal{W}Fuk^{\Lambda} (M)$, and it becomes the filtered $A_\infty$-category over $\Lambda$. 
Furthermore, by absorbing the weight $T^{E_{top} (u)}$ appearing in the $A_\infty$-operations into inputs and outputs using \eqref{eqn:topenergydist}, we have 
\begin{equation}\label{eqn:insertenergy}
\mathcal{W}Fuk (M) \otimes_{\mathbb{C}} \Lambda \hookrightarrow \mathcal{W}Fuk^{\Lambda} (M).
\end{equation}
To fix the notations, we describe the functor \eqref{eqn:insertenergy} more explicitly below.
For exact Lagrangians $L$ and $L'$, suppose a Hamiltonian chord $P \in CF( L,L';wH)$ gives rise to the generator $\widetilde{P} \in \hom_{\mathcal{W}Fuk(M)} (L,L') = CW(L,L')$.
Then the above functor sends $\widetilde{P}$ regarded as a morphism in $\mathcal{W}Fuk (M)$ to $T^{\mathcal{A}_{wH} (P)} P$ where $f_L$ and $f_L'$ are such that 
$$  \theta|_L = d f_L \quad \mbox{and} \quad  \theta|_{L'} = d f_{L'},$$
and the same symbol $P$ now represents the standard generator of the morphism space in $\mathcal{W}Fuk^{\Lambda} (M)$ supported at the Hamiltonian chord $P$. 
By abuse of notation, we denote the normalized generator $T^{\mathcal{A}_{wH} (P)} P$ by $\widetilde{P}$ regarded as a morphisms in $\mathcal{W}Fuk^\Lambda (M)$, which is actually the image of the standard generator (written by the same notation $\widetilde{P}$) in $\mathcal{W}Fuk(M)$ supported at $P$. $\widetilde{P}$ will be referred to as an \emph{exact generator} for this reason. 

\begin{defn}\label{def:exactgens}
A morphism in $\mathcal{W}Fuk^\Lambda (M)$ is said to be an exact generator if it is an image of a generator $\hom_{\mathcal{W}Fuk(M)} (L,L') $ under \eqref{eqn:insertenergy}. 
\end{defn}

Throughout, our notation will follow the same convention, i.e., for a geometric generator (a chord or an intersection point) $P$, $\widetilde{P}$ is the (image in $\mathcal{W}Fuk^{\Lambda} (M)$ of) morphism in the exact category, and hence among them, $A_\infty$-operations as well as continuations maps do not produce any nontrivial powers of $T$.
If $L \in \mathcal{W}^\Lambda$ is a compact object, then one may alternatively use Morse-Bott model $H^\ast (L;\Lambda)$ (or its variants) for $\hom_{\mathcal{W}^\Lambda} (L,L)$, which coincides with $CF(L,L;\Lambda)$ described earlier. For non-immersed generators in $H^\ast (L;\C) (\subset H^\ast (L;\Lambda)$, the corresponding exact generators are simply themselves, which is somewhat consistent with \eqref{eqn:auxvalH} since $H=0$ for the Morse-Bott model and $L=L'$ for an endomorphism space. By definition, their valuations $\nu$ vanish also.

On the other hand, immersed generators have slightly a different feature in this regard. Let $X$ be an immersed generator of the (Morse-Bott) Floer complex $CF(L,L;\Lambda)$ of an exact Lagrangian immersion $\iota: \underline{L} \looparrowright L$, supported at the self-intersection $\iota(X_+) = \iota(X_-)$. Suppose that the generator $X$ represents the branch jump from $X_-$ to $X_+$ in Floer theory of $L$.
One can assign another real number to $X$ analogously to \eqref{eqn:auxvalH} as follows. Let $f_{\underline{L}}$ be the chosen primitive of $\iota^\ast \theta$ for the exact brane $L$. We denote by $\mathcal{A}(X)$ the difference of the values of $f_{\underline{L}}$ at the branch jump $X$, 
\begin{equation}\label{eqn:actionforimm}
\mathcal{A}(X):=f_{\underline{L}} (X_+) - f_{\underline{L}} (X_-) \in \R.
\end{equation}
It will play an important role, indicating the position of $L$ in the global moduli of Lagrangian. 
 As before, if we use the scaling $\widetilde{X}=T^{\mathcal{A}(X)} X $ for immersed generators, then the $\AI$-operation does not involve the area term, which is essentially going back to the $\C$-linear category $\mathcal{W}^\C$. Thus an exact immersed generator $\widetilde{X}$ can have nonzero valuation $\nu (\widetilde{X})  = \mathcal{A}(X) \neq 0$.

We finish the discussion by the following simple example of $M=T^\ast S^1$ to illustrate how the actions of generators of the wrapped Floer cohomology are given.

\begin{example}\label{ex:cotcirclenegt}
Let $G$ be a cotangent fiber in $M:=T^\ast S^1$. 
If we identify $T^\ast S^1 = \R \times S^1$ and use $(s,t)$ as local coordinates (with $s \in \R$ and $t \in [0,1] / 0 \! \sim  \! 1$), the symplectic form is given by $\omega=dsdt$ with $\Theta=sdt$. (In view of $X = \mathbb{C}^\times$, $(t,s)$ and $z \in \mathbb{C}^\times$ are related by $\log z = s+ it$.) Identifying the subset $\{ \leq 1-\epsilon |s| \}$ as the conical end, one take $H(s,t) := |s| / (1-\epsilon)$ together with a suitable interpolation by a quadratic function over $-1 + \epsilon < s < 1- \epsilon$. Here, $\epsilon$ is a small positive real number so as to avoid integer Reeb chords at the contact boundary of $M$.

For $w=1,2,\cdots$, $CF(G,G;wH;\Lambda)$ is spanned by chords $\{Z_{i}^{(w)} : - \frac{w}{1-\epsilon} < i <  \frac{w}{1-\epsilon} \}$ where $Z_{i}^{(w)}$ wraps $i$ times the cylinder $M$. By a direct calculation, $\mathcal{A}_{wH} (Z_{i}^{(w)})= C \left( - \frac{i^2}{w} + \frac{w}{(1-\epsilon)^2}\right)$, and hence  the corresponding exact generator is given by
$\widetilde{Z_{i}^{(w)}}= T^{C \left( - \frac{i^2}{w} + \frac{w}{(1-\epsilon)^2}\right)} Z_{i}^{(w)}$. ($C$ is a fixed positive real number only depending on $H$.)

The obvious continuation map $CF(G,G;wH;\Lambda) \to CF(G,G;(w+1)H;\Lambda)$ identifies $\widetilde{Z_{i}^{(w)}}$ and $\widetilde{Z_{i}^{(w+1)}}$.
%(or, $Z_{i}^{(w)}$ and $Z_{i}^{(w+1)}$ multiplied by their action difference). 
 $\left\{\widetilde{Z_0}:=\widetilde{Z_0^{(1)}} , \widetilde{Z_i}:=\widetilde{Z_{i}^{(|i|)}} : i \in \mathbb{Z} \setminus \{0\} \right\}$ (or more precisely, their images in the cohomology) can be taken as a basis of the cohomology $HW(G,G;\Lambda)$. 
It is well-known that $m_2 (\widetilde{Z_i}, \widetilde{Z_j}) = \widetilde{Z_{i+j}}$ is the only nontrivial operation.

\end{example}

\subsection{Maurer-Cartan equation}\label{subsec:MCeqns}

Let $\bL$ be a graded unobstructed immersed Lagrangian, which is compact. We assume that $\bL$ consists of a single irreducible component for simplicity. Consider its Lagrangian Floer complex $CF(\bL,\bL;\Lambda)$ which is naturally an $A_\infty$-algebra. 
We assume that $CF(\bL,\bL;\Lambda)$ is supported on nonnegative degrees and the degree 0 component $CF^0 (\bL,\bL;\Lambda)$ is generated by the unit class. By passing to the minimal model, the condition is satisfied if $\bL$ does not have any immersed generators with nonpositive degrees.
To be more concrete, we will take the underlying vector space of $CF(\bL,\bL;\Lambda)$ to be $H^\ast (\bL;\Lambda)$ (adjoined with immersed generators if $\bL$ is immersed). 

In \cite{CHL}, the localized mirror associated to $\bL$ was constructed as the space of Maurer-Cartan deformation of $\bL$ defined in the following way\footnote{\cite{CHL} mainly deals with the \emph{weak} Maurer-Cartan deformation}. Let $\{X_1 ,\cdots, X_l\}$ be a basis of the degree 1 component $CF^1 (\bL,\bL;\Lambda_0)$, inducing that of $CF^1 (\bL,\bL;\Lambda) = CF^1 (\bL,\bL;\Lambda_0) \otimes_{\Lambda_0} \Lambda$ formed by elements with zero valuations. For (free) formal variables $x_1, \cdots, x_l$, we consider a linear combination $b= \sum x_i X_i$, and compute 
\begin{equation}\label{eqn:m1bm2bb}
 m_1(b) + m_2(b,b) + \cdots
\end{equation}
where we use the convention
\begin{equation}\label{eqn:earconv}
m_k (x_{i_1} X_{i_1}, \cdots, x_{i_k} X_{i_k}) = x_{i_k} \cdots x_{i_1} m_k (X_{i_1}, \cdots, X_{i_k}).
\end{equation}
Notice that elements of $\Lambda_+$ can only be plugged into formal variables $x_1,\cdots,x_l$ in order to to make sense of possibly an infinite sum \eqref{eqn:m1bm2bb}.

Since $CF(\bL,\bL;\Lambda)$ is $\Z$-graded, the outcome must be a linear combination of degree 2 generators of $CF(\bL,\bL;\Lambda)$ with coefficient being noncommutative formal power series in $x_1,\cdots, x_l$, i.e.,
$$    m_1(b) + m_2(b,b) + \cdots =  f_1 (x_1,\cdots,x_l) X_{l+1}  + \cdots + f_{N'} (x_1,\cdots, x_l) X_{N'}$$
where $X_{l+1}, \cdots, X_{N'}$ generate $CF^2 (\bL,\bL;\Lambda)$. Let $\Lambda\{\!\{ x_1, \cdots, x_l \}\!\}$ denote the space of (noncommutative) formal power series over $\Lambda$ with bounded coefficients. 
We see that if $x_i$'s are taken from the algebra
\begin{equation}\label{eqn:MCalgL}
 \dfrac{ \Lambda\{\!\{ x_1, \cdots, x_l \}\!\}}{\langle \langle f_1, \cdots, f_{N'} \rangle \rangle},
\end{equation}
then the associated $b$ solves the Maurer-Cartan equation
\begin{equation}\label{eqn:MCeqns}
 m_1(b) + m_2(b,b) + \cdots=0.
\end{equation}
Here, the ideal $\langle \langle f_1, \cdots, f_{N'} \rangle \rangle$ is defined as follows. Note that a series $\sum_{l=0}^\infty \sum_{I \in (\Z_{>0})^l} \lambda_I  x^I$ belongs to $\Lambda\{\!\{ x_1, \cdots, x_l \}\!\} $ if and only if the set $\{ val(\lambda_I) \colon I \in (\Z_{>0})^l \}$ ($\subset \Lambda$) is bounded below. 
(Here, $x^I = x_{i_1} \cdots x_{i_l}$ for $I=(i_1,\cdots,i_l)$.) 
We call $\inf \{ val(\lambda_I) \colon I \in (\Z_{>0})^l \}$ 
the ($T$-adic) \emph{valuation} of $\sum_{I \in (\Z_{>0})^l} T^{\lambda_I} x^I$. $\langle \langle f_1, \cdots, f_{N'} \rangle \rangle$ consists of elements  of the form
\begin{equation}\label{eqn:glfilhlfirst}
\sum_{k=1}^{N'} \sum_{l=0}^{\infty} \sum_{|I | + |J| = l} \lambda_{k, I, J} \,\, x^{I} \, f_k \, x^{J}
\end{equation}
such that valuations of elements in the set $\{ \lambda_{k, I , J}   \}$ are bounded below, where $|I|=m$ for $I \in (\Z_{>0})^m$.
As we will see later, $\Lambda\{\!\{ x_1, \cdots, x_l \}\!\}$ can be obtained as the continuous dual of the tensor algebra of $CF^1(\mathbb{L},\mathbb{L})$ (over $\Lambda$) with respect to a natural $T$-adic topology.  Correspondingly, $\langle \langle f_1, \cdots, f_{N'} \rangle \rangle$ is the closure of the ideal generated by $f_1,\cdots, f_{N'}$.

If  $\{Y_i\}$ is another basis of $CF^1 (\bL,\bL;\Lambda_0)$, then $Y_i = \sum_{j=1}^l a_{ij} X_j$ for some $a_{ij}$ such that $(a_{ij}) = (\underline{a}_{ij}) + (b_{ij}) $ where $(\underline{a}_{ij})$ is an invertible matrix with entries in $\mathbb{C}$ and $b_{ij} \in \Lambda_+$. Thus, the inverse $(a^{ij})$ of $(a_{ij})$ is given as $(a^{ij}) = (\underline{a}^{ij}) - (\underline{a}^{ij}) (b_{ij}) (\underline{a}^{ij}) + (\underline{a}^{ij}) (b_{ij}) (\underline{a}^{ij}) (b_{ij}) (\underline{a}^{ij}) - \cdots$
where $(\underline{a}^{ij}) := (\underline{a}^{ij})^{-1} $   (the expression converges since $b_{ij} \in \Lambda_+$). 
%It is elementary to check that 
The associated variable $y_i$ can be written as $y_i = \sum_{j=1}^{l} a^{ji} x_j$, and this gives an isomorphism between $\Lambda\{\!\{ x_1, \cdots, x_l \}\!\}$ and $\Lambda\{\!\{ y_1, \cdots, y_l \}\!\}$ by a valuation preserving (linear) coordinate change.

\begin{defn}\label{def:mcalgell}
%The algebra \eqref{eqn:MCalgL} is called 
The Maurer-Cartan algebra of $\bL$ is defined to be
$$A_\bL :=   \dfrac{ \Lambda\{\!\{ x_1, \cdots, x_l \}\!\}}{\langle \langle f_1, \cdots, f_{N'} \rangle \rangle}.$$
\end{defn}

Intuitively, the Maurer-Cartan space from $\bL$ is the (possibly noncommutative) space whose function ring is the Maurer-Cartan algebra of $\bL$. For a Maurer-Cartan solution $b$ (that is, $b$ satisfying \eqref{eqn:MCeqns}) with coefficients in $\Lambda_+$, $(\bL,b)$ defines an unobstructed object whose Floer cohomology is well-defined. Therefore, the Maurer-Cartan space can be thought of as a local moduli formed by objects in the Fukaya category near $\bL$ (see, for instance, \cite{HL-gokova}). 

\begin{example}\label{ex:T^nMCspace}
Let us consider $(\C^\ast)^n=T^\ast T^n$, the cotangent bundle of the $n$-dimensional torus $\bL:=T^n$. Since $\AI$-operations admits no contribution from nonconstant discs, we only need to take into account the cup product on $H^\ast (\bL)$. Since $H^\ast (\bL)$ is an exterior algebra generated by $d \theta_1, \cdots, d \theta_n$, we see that the Maurer-Cartan equation for $b= \sum x_i \theta_i$ gives
$$ m(e^b) = \sum_{i,j} (x_i x_j - x_j x_i) d\theta_i \wedge d\theta_j,$$
and hence the Maurer-Cartan algebra of $\bL$ is the symmetric algebra on $n$-letters, $x_1, \cdots x_n$. Taking into account the convergence issue over $\Lambda$, the Maurer-Cartan algebra is given by
\begin{equation}\label{eqn:MCforxitori}
 \dfrac{ \Lambda\{\!\{ x_1, \cdots, x_n \}\!\} }{ \langle \langle  x_i x_j - x_j x_i \mid 1 \leq i \neq j  \leq n \rangle \rangle}
\end{equation}
which can be thought of as a certain completion of the polynomial ring over $\Lambda$ in $n$-variables, and one can think of $x_1,\cdots,x_n$ as variables for $(\Lambda_+)^n$.
\end{example}

There is a way to include energy-$0$ Maurer-Cartan deformation, which is to use $\Lambda_\mathrm{U}$-connection $\rho$ on $\bL$. Fix a generator $X_1,\cdots, X_l$ of $H^1(\bL;\mathbb{Z})$, and suppose $\rho$ has holonomy $\rho_i \in \Lambda_\mathrm{U}$ along $PD(X_i)$. The Maurer-Cartan deformation by $\sum x_i X_i$ with $x_i \in \Lambda_+$ can be enhanced using $\rho$ by introducing a new variable $z_i = \rho_i e^{x_i}$. Here, the appearance of the expression $e^{x_i}$ is natural in the sense that $x_i$ appears in  $A_\infty$-operations deformed by $b=\sum x_i d\theta_i$ as an exponential due to the divisor axiom. However, the change of variables $x_i$ into $e^{x_i}$ has a dramatic effect on the valuation, since $e^{x_i}$ always has valuation zero for any $x_i \in \Lambda_+$.

\begin{example}\label{ex:expvar11}
Applying the above discussion to Example \ref{ex:T^nMCspace} with $b=(\rho, \sum_i x_i d \theta_i)$, we obtain an enlarged deformation space whose Maurer-Cartan algebra is given by
\begin{equation}\label{eqn:MCzitori}
\dfrac{ \Lambda\{ z_i \colon i \in \mathbb{Z} \}}{ \langle z_i z_j = z_{i+j} : i,j \in \mathbb{Z} \rangle}
\end{equation}
where $\Lambda\{ z_i \colon i \in \mathbb{Z} \}$ consists of an infinite sum $ \sum_{k=1}^{\infty} a_k z_{i_k}$ with $\lim val(a_k) = \infty$.  
One does not need to take the closure of the ideal $\langle z_i z_j = z_{i+j} : i,j \in \mathbb{Z} \rangle$ since it is automatically closed due to some general fact about affinoid algebras, see \cite[Proposition 3, 6.1.1]{BGR} and \cite[6.1.4]{BGR}. 
 
Notice that \eqref{eqn:MCforxitori} describes the ring of convergent (analytic) functions on $(\Lambda_+)^n$, whereas \eqref{eqn:MCzitori} is that of $(\Lambda_{\mathrm{U}})^n$. The exponential coordinate change $z_i = \rho_i e^{x_i} = \rho_i + \rho_i x_i + \cdots$ clearly explains the necessity of the condition $\lim val(a_k) = \infty$,
%is then clear, 
since otherwise $\sum_{k=1}^{\infty} a_k z_{i_k}$ would not give a well-defined element in \eqref{eqn:MCforxitori} after coordinate change back to $x_i$.
\end{example}

The last example concerns the Maurer-Cartan deformation by immersed generators of a Lagrangian, which can be intuitively thought of as smoothing out the corresponding self-intersections.

\begin{example}\label{ex:conimc}
Consider the deformed conifold $ \{ (u_1,v_1,u_2,v_2) \in \mathbb{C}^4 : u_1 v_1 - u_2 v_2 = \epsilon\}$ with $\epsilon \neq 0$. It admits a double conic fibration by writing the defining equation as $u_1 v_1 = z-a$ and $u_2 v_2 = z-b$ (with $b-a = \epsilon$). Let $M:=  \{ (u_1,v_1,u_2,v_2) \in \mathbb{C}^4 : u_1 v_1 - u_2 v_2 = \epsilon\} \setminus \{z=0\}$, which is the anti-canonical divisor complement of a deformed conifold. The projection to $z$-plane defines a double-conic fibration, and 
two matching cycles $\mathbb{L}_0$ and $\mathbb{L}_1$ over the paths drawn in Figure \ref{fig:conizbase} are Lagrangian 3-spheres. 

\begin{figure}[h]
	\begin{center}
		\includegraphics[scale=0.5]{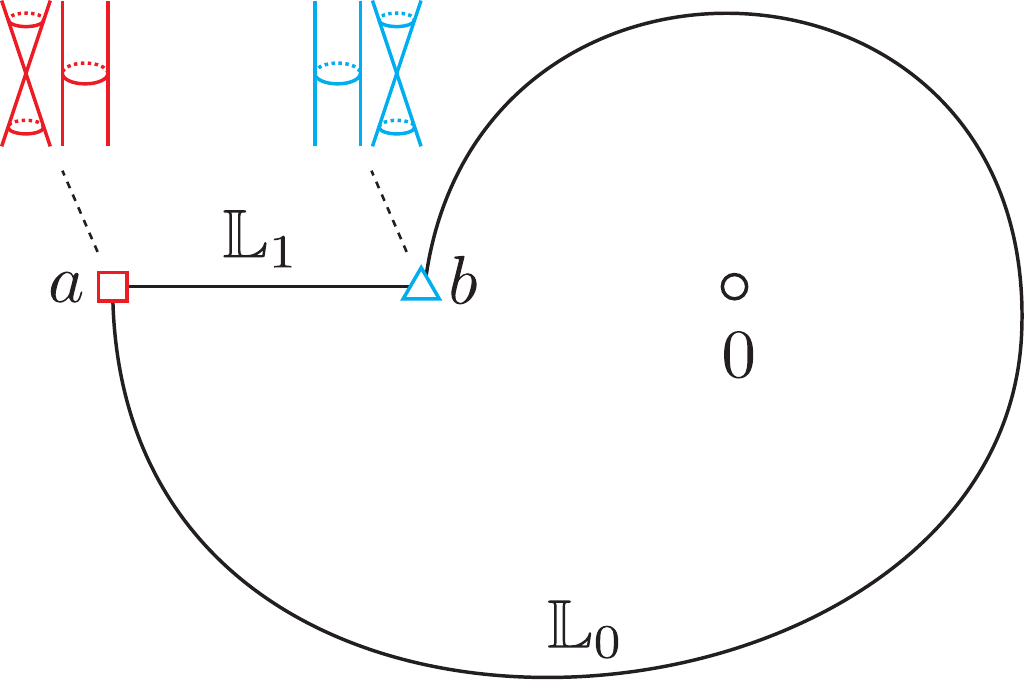}
		\caption{A double conic fibration on the deformed conifold with two singular fibers}
		\label{fig:conizbase}
	\end{center}
\end{figure}

Consider the union $\mathbb{L}:=\mathbb{L}_0 \oplus \mathbb{L}_1$ regarded as an immersed Lagrangian. Floer theory on $\mathbb{L}$ has been examined in \cite{CPU} using a certain simplicial model. $\mathbb{L}$ has the Maurer-Cartan algebra
$$ \dfrac{ \Lambda\{\!\{x,y,z,w \}\!\}}{\langle \langle xyz-zyx, yzw, wzy, zwx-xwz, wxy - yxw, xz, zx, yw, wy\rangle \rangle}  $$
where $x,y,z,w$ are taken from the path algebra $\Gamma Q$ for
\begin{equation*}\label{eqn:conifoldquiver} 
Q:\xymatrix{
\bullet   \ar@/^2pc/[rr]^x \ar@/^1pc/[rr]^z  
&& \bullet \ar@/^1pc/[ll]^{y} \ar@/^2pc/[ll]^{w} }.
\end{equation*}
Here, the relations $xz=zx=yw=wy=0$ come from composability of arrows in $Q$.
\end{example}

In Example \ref{ex:conimc}, we can alternatively use the semi-simple coefficient ring $\Bbbk_\Lambda:= \Lambda \pi_{0} \oplus \Lambda \pi_{1} $ with $\pi_i \cdot \pi_i =\pi_i$ and $\pi_0 \cdot \pi_1 = \pi_1 \cdot \pi_0 =0$ which can tell us about composability of paths as well. In this case, we take $\Bbbk_\Lambda \{\!\{ x,y,z,w\}\!\}$ as a replacement for $\Lambda\{\!\{x,y,z,w \}\!\}$ with $x,y,z,w \in \Gamma Q$, where $\pi_0 x = x \pi_1 =x$, $x \pi_0  =  \pi_1 x =0$, $z \pi_0 = z \pi_1= 0$, and similar for $y$, $z$, $w$. In what follows, we will work over such a coefficient ring to efficiently include the case when $\mathbb{L}$ is given as a union of several Lagrangians.

\section{Preliminaries on homological algebra in filtered setting}

We review basic algebraic constructions that are relevant to our geometric applications below, adapting \cite[Section 2]{EL-duality} to $T$-adic setting.
 A related construction in the filtered and curved setting  seems to be carried out in \cite{DDL}. 
To begin with, we fix once and for all our base ring $\Bbbk$ to be a semisimple ring $\Bbbk_\Lambda=\oplus_{i =1}^r \Lambda \pi_i$ such that $\pi_i \cdot \pi_i = \pi_i$ and $\pi_i \cdot \pi_j = 0$ for $i \neq j$. 
There exists a valuation $val : \Lambda \to \R$ such that $val(T^{\lambda}) = \lambda$, which has an obvious extension to $\Bbbk_\Lambda$.  
We set $\Bbbk_{\mathbb{C}} = \oplus_{i=1}^r \mathbb{C} \pi_i$ so that $\Bbbk_\Lambda = \Bbbk_{\mathbb{C}} \otimes_{\C} \Lambda$.  For both $\Bbbk_\Lambda$ and $\Bbbk_\C$, the sum $\sum_{1 \leq i \leq r} \pi_i$ of idempotents defines the unit, which we denote by $\mathbf{1}$.

\subsection{Non-Archimedean topology on $\Bbbk_\Lambda$-modules}\label{subsec:underlyingmods}

In what follows, the underlying (chain-level) space $V$ of any algebraic structures we are to  consider will be of the following form. $V$ is a free $\Bbbk_\Lambda$-bimodule endowed with a \emph{valuation} $\nu : V \to \mathbb{R} \cup \{ \infty\} $ such that
\begin{itemize}
\item $ \nu (c X) =  val (c) + \nu(X)$ for $c \in \Bbbk$ and $X \in V$;
\item $\nu (X+Y) \geq \min \{ \nu (X), \nu (Y) \}$ for $X,Y \in V$;
\item $\nu (X) =  \infty$ if and only if $X=0$.
\end{itemize}
Notice that $X \mapsto e^{-\nu(X)}$ defines a non-Archimedean norm, and hence a topology on $V$. 
The valuation  $\nu$ on $V$ gives rise to a filtration of $V$ defined by
$$ \mathcal{F}^{\lambda} V :=\{ X \in V \colon \nu (X) \geq \lambda \}$$
that satisfies
\begin{itemize}
\item $ \mathcal{F}^{\lambda} V \subset \mathcal{F}^{\lambda'} V$ for $\lambda > \lambda'$,
\item $ c \mathcal{F}^{\lambda} V \subset \mathcal{F}^{\lambda + val(c)} V$ for $c \in \Lambda$.
\end{itemize}
The completion of $V$ with respect to the filtration  agrees with the completion of the normed vector space $V$ with respect to the norm $e^{-\nu}$.

$\{ \mathcal{F}^{\lambda} V\}$ induces a filtration on the $k$-fold tensor product $T_k V:=V^{\otimes k}$ of $V$ (over $\Bbbk_\Lambda$) by
\begin{equation}\label{eqn:filtontensorR}
\mathcal{F}^{\lambda} T_k V=\mathcal{F}^{\lambda} (V \otimes \cdots \otimes V) := \bigcup_{\lambda_1 + \cdots + \lambda_k = \lambda} \mathcal{F}^{\lambda_1} V \otimes \cdots \otimes \mathcal{F}^{\lambda_k} V
\end{equation}
One can also consider the completion of $TV = \oplus_{k} V^{\otimes k}$ with respect to the filtration
$$ \mathcal{F}^{\lambda} TV:= \bigcup_k \bigcup_{\lambda_1 + \cdots + \lambda_k = \lambda} \mathcal{F}^{\lambda_1} V \otimes \cdots \otimes \mathcal{F}^{\lambda_k} V $$
naturally extending \eqref{eqn:filtontensorR}.

\subsection{$\AI$-algebras and coalgebras in Novikov setting}\label{subsec:aiinlambda}
An $\AI$-algebra $(V,\{m_k\}_{k \geq 1})$ over $\Bbbk_\Lambda$ is a free $\Z$-graded filtered $\Bbbk_\Lambda$-bimodule $V$ equipped with a sequence of multilinear operations
$$ m_k : V^{\otimes k} \to V$$
of degree $2-k$ which satisfies the relations \eqref{eqn:airelforLfooo}. 
Here, the filtered condition means that $V$ is equipped with a filtration as in \ref{subsec:underlyingmods} and that $m_k$ for each $k$ is a filtered map, i.e.,
$$ m_k ( \mathcal{F}^{\lambda_1} V_{i_1}, \cdots, \mathcal{F}^{\lambda_k} V_{i_k} ) \subset \mathcal{F}^{\lambda_1 + \cdots + \lambda_k + \lambda_d} V_{i_1+\cdots i_k+2-k}$$
for all $k$. Analogous conditions will be required for other algebraic structures too.

\begin{remark}\label{rmk:signdgai}
We use the following sign change that turns an $\AI$-algebra with $m_{k \geq 3} \equiv 0$ into a dga
%An $\AI$-algebra with $m_{k \geq 3} \equiv 0$ becomes a dga via the sign change
$$ d(X) = 	m_1 (X), \qquad X \cdot Y = (-1)^{|X|} m_2 (X,Y).$$
(See for e.g., \cite[Appendix A]{CHL_gl}.)
%$$ d(X) = (-1)^{|X|} m_1 (X), \qquad X \cdot Y = (-1)^{|X|(|Y|+1)} m_2 (X,Y).$$
Adding extra signs is necessary due to difference between $\AI$- and dg-sign conventions. To avoid a potential confusion, an $\AI$-algebra $(V,m_1,m_2, m_{\geq 3} \equiv 0)$ will be referred to as an $\AI$-dga. 
\end{remark}
 
For two $\AI$-algebras $(V_1,\nu_1)$ and $(V_2,\nu_2)$, an $\AI$-homomorphism $\phi$ is defined as a sequence of filtered multilinear maps $\phi_k : T_k V_1 \to V_2$ of degree $1-k$ satisfying
$$ \sum_k \sum_{i_1,\cdots, i_k \geq 1} m_l^{V_2} (\phi_{i_1} (\mathbf{X}^{(1)} ), \cdots, \phi_{i_l} (\mathbf{X}^{(l)}) )  = \sum (-1)^{|\mathbf{X}^{(1)}|'} \phi_{k_1} (\mathbf{X}^{(1)}, m_{k_2}^{V_1} (\mathbf{X}^{(2)} ), \mathbf{X}^{(3)}  ).$$
where we write $\mathbf{X} \in B_k V_1$ as $\mathbf{X} = \mathbf{X}^{(1)} \otimes \cdots \otimes \mathbf{X}^{(l)}$ on the left hand side with $\mathbf{X}^{(i)} \in T_j V$ for some $j$, and the decomposition on the right hand side is similarly defined. 
 
An $\AI$-right module $(E,\nu_E)$ over $V$ is a filtered $\Bbbk_\Lambda$-bimodule that is equipped with a sequence of filtered multilinear maps $\{n_{1|k}\}$, $ n_{k |1} : E \otimes T_k V  \to E$ of degree $1-k$, satisfying
 $$ \sum (-1)^{|Y|'+ |\mathbf{X}^{(1)}|'} n_{1|k_1}  (Y, \mathbf{X}^{(1)}, m_{k_2} (\mathbf{X}^{(2)}), \mathbf{X}^{(3)}) + \sum n_{1|k_1} ( n_{1|k_2} ( Y, \mathbf{X}^{(1)} ), \mathbf{X}^{(2)}))=0.$$
Left modules are similarly defined. 
Let $E_1$ and $E_2$ be two $\AI$-right modules over $V$.
A degree $d$ pre-$\AI$-morphism $f$ between two $\AI$-modules over $V$ is given by a sequence of filtered multilinear maps $\{ \psi_{k,1}\}$, $\psi_{k,1} : T_k V \otimes E_1 \to E_2$ of degree $1-k+d$.
We denote by $\hom_V^d (E_1,E_2)$ the set of all degree $d$ pre-$\AI$-module homomorphisms from $E_1$ to $E_2$,
and set $\hom_V (E_1, E_2) := \oplus_d \hom_V^d (E_1,E_2)$. 

There is a differential $\mathfrak{M}_1$ (in $\AI$-convention) on $\hom_V (E_1, E_2)$ defined by
\begin{equation}\label{eqn:mathfrakm1}
\begin{array}{rl}
\mathfrak{M}_1 (\psi)_{1|k} (Y, X_1,\cdots,X_k) := & \sum (-1)^{ |\psi'|+ |Y|'+ |\mathbf{X}^{(1)}|'} \psi_{1|k_1} ( Y, \mathbf{X}^{(1)}, m_{k_2}  (\mathbf{X}^{(2)}) ,  \mathbf{X}^{(3)} ) \\ 
& + \sum   (-1)^{|\psi|'}  \psi_{1|k_1} ( n^{E_1}_{1|k_2} (  Y, \mathbf{X}^{(1)} ),  \mathbf{X}^{(2)}  ) \\
& + \sum   n_{1|k_1}^{E_2} (   \psi_{1|k_2} ( Y, \mathbf{X}^{(1)}) ,\mathbf{X}^{(2)} ).
\end{array}
\end{equation}
It is elementary to check that $\mathfrak{M}_1^2=0$.
$\mathfrak{M}_1$-closed pre-morphisms are usually called $\AI$-module homomorphisms. 
 When $E_1= E_2=E$, we can compose (in $\AI$-convention) two morphisms $\{ \psi_{1|k}\}$ and $\{ \phi_{1|k}\}$ from $E$ to itself by
 \begin{equation}\label{eqn:mathfrakm2}
 \mathfrak{M}_2(\phi, \psi)_{1|k} (Y,X_1, \cdots, X_k) = \sum  (-1)^{|\phi|'}  \phi_{1|k_1} (\psi_{1|k_2} (Y,\mathbf{X}^{(1)}) , \mathbf{X}^{(2)} ),
 \end{equation}
and $(\hom_V (E, E), \mathfrak{M}_1, \mathfrak{M}_2)$ defines an $\AI$-dga in this case.

We next discuss an $A_\infty$-coalgebra $C$, which is the dual notion to an $\AI$-algebra. 
 
\begin{defn}
An $A_\infty$-coalgebra $C$ over $\Bbbk_\Lambda$ is a free $\mathbb{Z}$-graded filtered $\Bbbk_\Lambda$-module equipped with a family of filtered maps 
$ \Delta_k : C \to C^{\otimes k}$
of degree $k-2$ such that, for each $k \geq 1$,
\begin{equation}\label{eqn:coalgrel}
 \sum_{\substack{ r+s+t=k \\ u = r+1+t }} (-1)^*(1^{\otimes r} \otimes \Delta_s \otimes 1^{\otimes t}) \Delta_u    =0.
\end{equation}

 For two $\AI$-coalgebras $C$ and $C'$ over $\Bbbk_\Lambda$, a morphism between them is a sequence of filtered linear maps $\{ \phi_k : C \to C'^{\otimes k} \}$ satisfying
 $$ \sum(-1)^{\ast} (id^{\otimes k_1} \otimes \Delta_{k_2} \otimes id^{\otimes k_3} ) \circ \phi_{k_1+k_2 + k_3} = \sum (\phi_{i_1} \otimes \phi_{i_2} \otimes \cdots \otimes \phi_{i_m} ) \circ \Delta_m. $$
\end{defn}

The signs in \eqref{eqn:coalgrel} follow the Koszul convention (with respect to the shifted degrees). For instance, in \eqref{eqn:coalgrel},
$$(-1)^* (1^{\otimes r} \otimes \Delta_s \otimes 1^{\otimes t}) (\mathbf{c}^{(1)}, \mathbf{c}^{(2)}, \mathbf{c}^{(3)} ) = (-1)^{|\mathbf{c}^{(1)}|'} \mathbf{c}^{(1)} \otimes \Delta_s (\mathbf{c}^{(2)}) \otimes \mathbf{c}^{(3)}$$
where $\mathbf{c}^{(1)}\otimes \mathbf{c}^{(2)} \otimes \mathbf{c}^{(3)}  =(c_1\otimes \cdots \otimes c_r) \otimes (c_{r+1} \otimes \cdots \otimes c_{r+s}) \otimes (c_{r+s+1} \otimes \cdots \otimes c_{r+s+t})$.

A \emph{dg-coalgebra} $(C,d,\Delta: C \to C  {\otimes} C)$ is a special case  of an $A_\infty$-coalgebra with $\Delta_{3 \geq} \equiv 0$, where $d$ and $\Delta$ differ from $\Delta_1$ and $\Delta_2$ by $d (c) =  \Delta_1(c)$, $\Delta (c)=\sum (-1)^{|c_1|} c_1 \otimes c_2$ when $\Delta_2(c) = \sum c_1 \otimes c_2$. The sign change makes $(d,\Delta)$ satisfy the usual graded co-Leibnitz rule. 
%(This is dual to sign changes given in Remark \ref{rmk:signdgai}.) 
We will call $(C,\Delta_1,\Delta_2, \Delta_{\geq 3 } \equiv 0)$ an $A_\infty$-dg-coalgebra to avoid potential confusion.

\subsection{Bar construction}

We next recall bar/cobar constructions in our setting.
A dg-coalgebra is closely related with an $\AI$-algebra structure via non-reduced bar construction. (The bar construction will refer to the reduced one in this paper.) Namely,
$A_\infty$-structure $\{m_k\}$ on $V$ over $\Bbbk_\Lambda$ is equivalent to a dg-coalgebra structure on $\WT{B} V:= T (sV) = {\oplus}_{k} T_k(sV) $, which is a non-reduced bar construction of $V$ (see for e.g. \cite{keller2001}). Here, $sV$ is a suspension of $V$ which shifts degree by $-1$, and hence for $x \in V$, $|sx| = |x|-1$, which also agrees with the shifted degree $|x|'$. (If there is no danger of confusion, we will use the same notation $x$ for $sx$ in $TV$.)
$\Delta$ and $d$ on $TV$ are defined by
\begin{equation}\label{eqn:DeltaTV}
\Delta: \WT{B} V \to \WT{B} V \otimes \WT{B} V \qquad x_1 \otimes \cdots \otimes x_k \mapsto \sum_i (x_1 \otimes \cdots \otimes x_i ) \otimes (x_{i+1} \otimes \cdots \otimes x_k)
\end{equation}
\begin{equation}\label{eqn:codiffTV}
d : \WT{B} V \to \WT{B} V \qquad  x_1 \otimes \cdots \otimes x_k \mapsto \sum_{i,l} (-1)^{|x_1|' + \cdots + |x_i|'} x_1 \otimes \cdots \otimes m_l (x_{i+1} ,\cdots, x_{i+l}) \otimes \cdots \otimes x_k.
\end{equation}
One can easily check that this defines a dg-coalgebra (with respect to the shifted degree), and that the condition $d^2 = 0$ is equivalent to the $A_\infty$-relations among $\{m_k\}_{k \geq 1}$.

In this paper, we will mainly consider the \emph{reduced} bar construction, or simply the bar construction, for an augmented $\AI$-algebra, which is given as follows. 
Let $V$ be an augmented $\AI$-algebra, i.e., an $\AI$-algebra together with an $A_\infty$-homomorphism $\varepsilon: V \to \Bbbk_\Lambda$. 
If $V$ is unital, one can decompose $V = \bar{V} \oplus \Bbbk_\Lambda$ where $\bar{V} := \ker \varepsilon$ is a (non-unital) $\AI$-algebra. 

We further assume that $\varepsilon$ is strict, in the sense that it does not involve higher components $V^{\otimes k} \to \Bbbk_{\Lambda}$ for $k \geq  2$. This implies that $\bar{V}$ is preserved by any $A_\infty$-operations. For instance,  $CF(L,L;\Lambda)$ for a compact unobstructed Lagrangian $L$ admits a (strict) augmentation $\varepsilon$ after passing to the minimal model if necessary, where $\varepsilon$ is the projection to the unit component.

\begin{defn}
%Let $A$ be an augmented $\AI$-algebra such that $A = \bar{A} \oplus \Bbbk$. Then its 
The bar construction of an augmented $\AI$-algebra $(V,\varepsilon)$ over $\Bbbk_\Lambda$ is a dg-coalgebra on $B V = T (s\bar{V})$ where $\bar{V}=\ker \varepsilon$ and the operations $d$ and $\Delta$ are given by the formulas \eqref{eqn:codiffTV} and \eqref{eqn:DeltaTV} (with $V$ replaced by $\bar{V}$).
\end{defn}

Note that $BV$ includes $\Bbbk_\Lambda$ as a component, and is coaugmented via the inclusion of this component. In general, the coaugmentation of an $\AI$-coalgebra $C$ is an $\AI$-coalgebra homomorphism $\eta: \Bbbk_\Lambda \to C$.

%\label{ex:dualofAI}
\subsection{Cobar construction}\label{subsubsec:cobarconst}
To define cobar the construction of an $\AI$-coalgebra over $\Bbbk_\Lambda$, we need to enlarge the tensor algebra as follows.

\begin{defn}\label{def:tensorcocomple}
For a filtered $\Bbbk_\Lambda$-bimodule $C$, let $\widetilde{T} C$ be the inverse limit of $\{TC / \mathcal{F}_l TC\}_{l \geq 0}$ where $\mathcal{F}_l TC = \oplus_{i=l+1}^{\infty} C^{\otimes i}$. $\overline{T}C$ is defined as the subspace of $\widetilde{T} C$ consisting of elements all of whose factors have valuations bounded from below. 
One can express a general element of $\widetilde{T} C$ as
$$\sum_{l=0}^{\infty} \sum_{|\vec{\alpha}| =l} c_{\vec{\alpha}} x_{\vec{\alpha}} \qquad  (c_{\vec{\alpha}} \in \Bbbk_\Lambda \,\, \mbox{and} \,\, \vec{\alpha} = (\alpha_{1},\cdots, \alpha_{l}) \in I^{l} \,\, \mbox{and}\,\, x_{\vec{\alpha}} = x_{\alpha_1} \otimes \cdots \otimes x_{\alpha_l}).$$ 
%where its factor in $TC / \mathcal{F}_l TC$ is obtained by removing the terms with length $\geq l$ from the sum.
%such that 
(When $C$ is graded, then the sum is required to be supported on finitely many degrees.) 
%i.e., $|\{ \deg x_{\vec{\alpha}_i} \colon i=1,2,\cdots \}| < \infty$, 
%and that,
%for all $i=1,2.\cdots$
This element belongs to $\overline{T}C$ if
\begin{equation}\label{eqn:tbarvalsummand}
val(c_{\vec{\alpha}}) + \sum_{i=1}^{l} \nu (x_{\alpha_{i}}) > \lambda
\end{equation}
for some $\lambda \in \R$ for all $l$ and $\alpha$. 
\end{defn}

Given an $A_\infty$-coalgebra $(C,\Delta_k)$ over $\Bbbk_\Lambda$ with the coaugmentation $\eta: \Bbbk_\Lambda \to C$, there is a natural way to obtain a dg-algebra, called the cobar construction. 
We assume $\eta$ is strict in the sense that $\eta_k\equiv 0 $ for $k \geq 2$.

\begin{defn}\label{def:coalgtbar}
Let $C$ be a coaumented $\AI$-coalgebra over $\Bbbk_\Lambda$, and $\bar{C}:= C / \eta(\Bbbk)$. Its cobar construction is the dg-algebra $\Omega C := \overline{T}(s^{-1} \bar{C})$, where the differential $\delta:=d_{\Omega C}$ is given by
\begin{equation}\label{eqn:cobardiff}
\delta( x_1 \otimes \cdots \otimes  x_k) = \sum_{i,j} (-1)^{|x_1|' + \cdots + |x_{i-1}|'} x_1 \otimes \cdots \otimes x_{i-1} \otimes \Delta_j (x_i) \otimes x_{i+1} \otimes \cdots \otimes x_k
\end{equation}
The multiplication $\cdot$ on $\Omega C$ is defined to be the usual concatenation of tensors.
\end{defn}
\eqref{eqn:cobardiff} is an element of $\overline{T}(s^{-1} \bar{C})$ since $\Delta_j$'s are filtered. 
One can check that $\delta$ satisfies the graded Leibnitz rule (with respect to the degree shifted by $s^{-1}$) and $\delta^2=0$, and hence $\Omega C$ defines a dga. Also, $\Omega C$ is augmented via the projection to the component $\Bbbk_\Lambda$ in $\overline{T}(s^{-1} \bar{C})$.

\begin{remark}\label{rmk:threecobar}
One can analogously perform the cobar construction on the length completion $\tilde{T} C$. We write $\hat{\Omega} C$ for the resulting dga. On the other hand, for a dg-coalgebra $C$, the construction can be carried out for the usual tensor algebra $T \bar{C}$, which we denote by $\underline{\Omega} C$. Clearly, $\hat{\Omega} C= \varprojlim_l \underline{\Omega} C / \mathcal{F}_l \underline{\Omega} C$ where $\mathcal{F}_l \underline{\Omega} C$ denotes the subspace of elements with tensor length$\geq l+1$. For a dg-coalgebra $C$ over $\Bbbk_{\Lambda}$ (in the filtered setting), we have $\underline{\Omega} C \hookrightarrow \Omega C  \hookrightarrow  \hat{\Omega} C $.
\end{remark}

\subsection{Dual of $\AI$-algebras and $\AI$-coalgebras}
In our $T$-adic setting, the most natural way to take the dual of a vector space over $\Lambda$ is to consider bounded linear maps. Namely, linear functionals are required to satisfy certain boundedness conditions with respect to the valuation functions. We first discuss the dual of a $\Bbbk_\Lambda$-module.

Let us consider a non-graded $\Bbbk_\Lambda$-bimodule $V$ over $\Bbbk_\Lambda$ with a valuation $\nu : V \to \R \cup \{ \infty\}$. Let $V^{\vee}$ denote the set of all \emph{right} $\Bbbk_\Lambda$-module maps $V \to \Bbbk_\Lambda$ which are bounded. Recall that a $\Bbbk_\Lambda$-linear map $f: V \to \Lambda$ is said to be \emph{bounded} if
\begin{equation}\label{eqn:boundeddual}
 \inf \{ val(f (X)) \in \Bbbk_\Lambda \colon X\in V \,\, \mbox{with} \,\, \nu(X) = 0\} \neq - \infty.
\end{equation}
In this case, one can define $\nu^{\vee} : V^{\vee} \to \R \cup \{\infty\}$ to be this infimum. Such $f$ gives a continuous linear map between the two non-Archimedean normed spaces $(V,e^{-\nu})$ and $(\Bbbk_\Lambda, e^{-val})$, and $V^\vee$ is usually referred to as the  topological (or continuous) dual of $V$. One can check that $e^{-\nu^{\vee}}$ defines a non-Archimedean norm on $V^\vee$.
Since \eqref{eqn:boundeddual} is equivalent to $\inf \{ val(f (X)) - \nu(X) \in \Bbbk_\Lambda \colon X\in V\}$, we have
$$ val (f(X)) \geq \nu^{\vee} (f) + \nu (X)$$
for any $X \in V$. The $\Bbbk_\Lambda$-bimodule structure on $V^{\vee}$ is  defined by $( a \cdot f \cdot  b ) (X) = a \cdot f( b \cdot X)$ for $a,b \in \Bbbk_\Lambda$. One can analogously define the left dual $^{\vee} \! V$ of $V$, the set of bounded left $\Bbbk_\Lambda$-module maps from $V$ to $\Bbbk_\Lambda$, and $^{\vee} \! V$ becomes a $\Bbbk_\Lambda$-bimodule via $( a \cdot f \cdot  b ) (X) = f(X \cdot a) \cdot b$.

For a graded $\Bbbk_\Lambda$-module $V= \oplus_{d \in \Z} V_d$, we define $V^\vee= \oplus_d \left( V_{d} \right)^{\vee}$, where we set the grading on $ \left( V_{d} \right)^{\vee}$ to be $-d$ so that $\left( V^\vee \right)_d = \left( V_{-d} \right)^{\vee}$.
%the degree $d$ component $V^{\vee}_{d}$ is given by the set of all bounded linear maps $V_{-d} \to \Bbbk_\Lambda$. 
This resembles taking the graded dual of $V$, but the usual algebraic dual of each graded piece now being replaced by the topological dual.

Suppose that $\{X_i \colon i \in I\}$ freely generate the degree $d$ component $V_d$ over $\Bbbk_\Lambda$, and for each $i$, $X_i \in \pi_{\beta_i} \cdot V_d \cdot \pi_{\alpha_i}$ for some $\alpha_i$ and $\beta_i$.
For this choice of generators, one can define the \emph{coordinate function} $x_i \in\pi_{\alpha_i} \cdot (V_d)^\vee \cdot \pi_{\beta_i} \subset (V_d)^\vee$ of degree $-d$ by
$ x_i (X_j) := \delta_{ij} \, \pi_{\alpha_i}$.
and extending it $\Bbbk_\Lambda$-linearly.
Obviously, $x_i$ is bounded, and $\nu^{\vee} (x_i) = - \nu (X_i)$. In general, an element of $(V^{\vee})_d$ can be written as a possibly infinite linear combination
\begin{equation}\label{eqn:geneeltofvee}
\sum_{j =1}^{\infty} b_i x_{i} \qquad b_i \in \Bbbk_{\Lambda}
\end{equation}
satisfying $val(b_i) + \nu^\vee (x_i) > \lambda$ for all $i$ and some fixed constant $\lambda \in \R$. If $V_d$ is finitely generated, then \eqref{eqn:geneeltofvee} reduces to a finite linear combination over $\Bbbk_\Lambda$. A general element of $V^\vee$ is a finite sum of series like  \eqref{eqn:geneeltofvee} with different degrees.

\subsubsection{Dual of a tensor algebra}
We make a few remarks about the relation between the tensor algebra and the (topological) dual. For a free graded $\Bbbk_\Lambda$-bimodule $(V,\nu)$, the degree $d$ piece of $TV$ is given by
$$(TV)_d = \bigoplus_k \bigoplus_{d_1 + \cdots + d_k =d} V_{d_1} \otimes \cdots \otimes V_{d_k}.$$
%consider its $k$-fold tensor product $T_k V$. 
Recall that $TV= \oplus_d (TV)_d$ is equipped with a filtration induced by $\nu$, and it makes sense to take its topological dual $(TV)^\vee$. Namely, it consists of linear maps $f: TV \to \Bbbk_\Lambda$ which support on finitely many degrees and satisfy $f(\mathcal{F}^{\lambda} T V ) \subset \mathcal{F}^{\lambda+\lambda_f} \Bbbk_\Lambda$

We always have a $\Bbbk_\Lambda$-bimodule homomorphism 
\begin{equation}\label{eqn:tenftenl}
(V_{d_1})^{\vee} \otimes \cdots \otimes (V_{d_k})^{\vee} \to  (V_{d_k} \otimes \cdots \otimes V_{d_1})^{\vee} 
\end{equation}
by letting $(y_1 \otimes \cdots \otimes y_k) \, (Z_k \otimes \cdots \otimes Z_1) :=y_1(  \cdots y_{k-1} (y_k (Z_k) \, Z_{k-1} ) \cdots Z_1),$
which clearly defines a bounded map.
It induces maps $T_k (V^\vee) \to T_k (V^\vee)$ and $ T(V^\vee) \to (TV)^\vee $ which are isomorphisms for a finitely generated $V$.
For the left-dual $^\vee \! V$, we instead use $(y_1 \otimes \cdots \otimes y_k) \, (Z_k \otimes \cdots \otimes Z_1) =y_k ( Z_k  \, y_{k-1} (Z_{k-1}   \cdots y_1(Z_1)))$, and proceed analogously.
From now on we will mainly consider the right dual to simplify the exposition.

\begin{lemma}\label{lem:bartvhat1}
If $V$ is finitely generated graded module over $\Bbbk_\Lambda$, then $(T V)^\vee \cong \overline{T} (V^\vee)$,
where $\overline{T} (V^\vee)$ is given as in Definition \ref{def:tensorcocomple}.
%the subspace of the length-completion spanned by homogeneous formal series each of whose terms are bounded by a fixed constant (see \eqref{eqn:tbarvalsummand}).
\end{lemma}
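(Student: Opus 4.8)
The plan is to exploit the grading and the length decomposition on both sides, reducing the statement to a length-by-length comparison where finite generation makes the pairing \eqref{eqn:tenftenl} an honest isomorphism; the completion visible in $\overline{T}(V^\vee)$ will then appear as the passage from a direct sum to a \emph{bounded} product upon dualizing. Concretely, I would first fix a finite homogeneous $\Bbbk_\Lambda$-basis $\{X_i\}$ of $V$, with $X_i \in \pi_{\beta_i}\cdot V \cdot \pi_{\alpha_i}$, together with its coordinate functions $\{x_i\}\subset V^\vee$, recalling that $\nu^{\vee}(x_i) = -\nu(X_i)$. Since $V$ is finitely generated and free, each $V^{\otimes k}$ is free of finite rank with $\Bbbk_\Lambda$-basis the admissible monomials $X_{\vec\alpha} = X_{\alpha_1}\otimes\cdots\otimes X_{\alpha_k}$ (and, the generating set being finite, there are only finitely many of them in each length $k$), and its valuation is the non-Archimedean one $\nu(\sum a_{\vec\alpha}X_{\vec\alpha}) = \min_{\vec\alpha}\bigl(val(a_{\vec\alpha}) + \sum_i \nu(X_{\alpha_i})\bigr)$. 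For such finite-rank modules the map \eqref{eqn:tenftenl} is an isometric isomorphism $T_k(V^\vee)\xrightarrow{\sim}(V^{\otimes k})^\vee$, so at each fixed length the duality is already settled.

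Now $(TV)^\vee = \bigoplus_d ((TV)_d)^\vee$, and for fixed $d$ one has the direct-sum decomposition $(TV)_d = \bigoplus_k (V^{\otimes k})_d$. I would define $\Phi\colon \overline{T}(V^\vee)\to (TV)^\vee$ by sending a series $\xi = \sum_l \sum_{|\vec\alpha|=l} c_{\vec\alpha}\, x_{\vec\alpha}$ to the functional whose restriction to length $k$ is the image of the (finite) sum $\sum_{|\vec\alpha|=k} c_{\vec\alpha}\,x_{\vec\alpha}$ under \eqref{eqn:tenftenl}; because an element of $TV = \bigoplus_k V^{\otimes k}$ is a finite sum of monomials, $\Phi(\xi)$ evaluates as a finite sum and is manifestly well defined and $\Bbbk_\Lambda$-bilinear in the order-reversing sense dictated by \eqref{eqn:tenftenl}. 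In the reverse direction I would define $\Psi\colon (TV)^\vee \to \overline{T}(V^\vee)$ by reading off coefficients, $\Psi(f) = \sum_{\vec\alpha} f(X_{\vec\alpha})\, x_{\vec\alpha}$, with indices ordered to match the pairing and the idempotents recorded by the $\pi_{\alpha_i},\pi_{\beta_i}$. By construction $\Phi$ and $\Psi$ are mutually inverse at the level of coefficients, so the entire content of the lemma reduces to showing that each of them lands in the correct target space.

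The heart of the argument, and the step I expect to be most delicate, is the exact matching of the two boundedness conditions. For $\xi\in\overline{T}(V^\vee)$ the defining inequality \eqref{eqn:tbarvalsummand} reads $val(c_{\vec\alpha}) + \sum_i \nu^{\vee}(x_{\alpha_i}) > \lambda$, i.e.\ $val(c_{\vec\alpha}) - \sum_i \nu(X_{\alpha_i}) > \lambda$; using the monomial formula for $\nu$ on $TV$, this is precisely the assertion that $val(\Phi(\xi)(Z)) \geq \lambda + \nu(Z)$ first on monomials and hence on all $Z\in TV$ by the non-Archimedean triangle inequality, so $\Phi(\xi)$ is bounded in the sense of \eqref{eqn:boundeddual} with $\nu^{\vee}(\Phi(\xi))\geq \lambda$. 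Running the same computation backwards shows that $\Psi(f)$ satisfies \eqref{eqn:tbarvalsummand} with constant $\lambda = \nu^{\vee}(f)$, so both maps are valuation preserving and land where claimed. I would emphasize that the ``supported on finitely many degrees'' clause present both in Definition \ref{def:tensorcocomple} and in the graded-dual convention is exactly what keeps these series controlled when $V$ carries degree-$0$ generators, so that arbitrarily long monomials of a fixed total degree are admitted on both sides simultaneously --- this is the very phenomenon forcing the completion $\overline{T}$ rather than the plain tensor algebra $T$. Finally, checking that $\Phi$ and $\Psi$ respect the $\Bbbk_\Lambda$-bimodule structures and the grading (recall $(V^\vee)_d = (V_{-d})^\vee$) is routine bookkeeping with the idempotents, completing the identification $(TV)^\vee \cong \overline{T}(V^\vee)$.
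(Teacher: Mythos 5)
Your proposal is correct and follows essentially the same route as the paper's proof: fix a finite $\Bbbk_\Lambda$-basis of $V$, expand an element of $(TV)^\vee$ as a series in the dual coordinate monomials, and observe that the boundedness condition \eqref{eqn:boundeddual} on normalized basis monomials translates exactly into \eqref{eqn:tbarvalsummand} together with the finite-degree-support requirement. The paper states this more tersely (it only exhibits the expansion in one direction and identifies the two conditions), whereas you spell out both mutually inverse maps, but the content is the same.
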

\begin{proof}
Let us fix a basis $\{X_i : i \in I\}$ of $V$ 
%with $\nu (X_i)=0$ 
with $X_i \in \pi_{\beta_i} V \pi_{\alpha_i}$,
 and consider associated coordinate functions $\{x_i \colon i \in I\}$. $V \otimes \cdots \otimes V$ admits a basis $\{X_{i_1} \otimes \cdots \otimes X_{i_k} : k \geq 1, \alpha_{i_l} =  \beta_{i_{l+1}} \}$. Therefore an element of $(TV)^\vee$ is completely determined by its value at these element. Namely, it can be expressed as
\begin{equation}\label{eqn:bartvhat}
\sum_{k=1}^{\infty} \sum_{v \in I^k} c_v \vec{x}_{v}
\end{equation}
where $\vec{x}_{v} = x_{v_1} \otimes \cdots \otimes x_{v_k}$. For this to be an element of $(TV)^\vee$, it has to be a bounded map, and $c_v$ is nonzero for finitely many degrees only. Namely, the set of values of \eqref{eqn:bartvhat} at normalized basis elements 
$$\left\{T^{-\sum_k val(X_{v_{k}})} X_{v_{1}} \otimes \cdots \otimes X_{v_{k}} : (v_1,\cdots, v_k)  \in I^k, k=1,2,\cdots\right\}$$ with valuation  zero should be bounded below. Thus there exists $\lambda \in \R$ such that
$$ val(c_v) - \sum_{l=1}^{k} \nu (X_{v_{l}})=  val(c_v) + \sum_{l=1}^{k} \nu (x_{v_{l}})  > \lambda$$
for all $v$. Together with the fact that \eqref{eqn:bartvhat} supports only finitely many degrees, this is precisely a description of an element of $\overline{T} (V^{\vee})$.
\end{proof}

\subsubsection{Dual of a finitely generated $\AI$-algebra over $\Bbbk_\Lambda$}
Suppose we are given a filtered $\AI$-algebra $(V,\{m_k\}_{k\geq1})$ finitely generated over $\Bbbk_\Lambda$. 
For $f \in V^\vee$, we define
\begin{equation}\label{eqn:aiatoaica}
\Delta_k (f) (X_1    \otimes \cdots \otimes X_k) ) = (-1)^{|X_1|' + \cdots + |X_k|'} f(m_k(X_1,\cdots, X_k)) ),
\end{equation}
where we implicitly use the fact that \eqref{eqn:tenftenl} is an isomorphism for finitely generated $V$.
It is not difficult to show that $\Delta_k (f)$ is a bounded linear map, and hence $\Delta_k (f) \in (V^{\otimes k})^\vee \cong V^\vee \otimes \cdots \otimes V^\vee$. 
Therefore the dual $V^\vee$ of a filtered finitely generated $\AI$-algebra over $\Bbbk_\Lambda$ admits a structure of a filtered $\AI$-coalgebra over $\Bbbk_\Lambda$.

\subsubsection{Dual of an $\AI$-coalgebra over $\Bbbk_\Lambda$}
Let us next consider a filtered $\AI$-coalgebra $C$, not necessarily finitely generated. As before, consider the dual $C^\vee$ of $C$, the set of bounded $\Bbbk_\Lambda$-right-linear maps on $C$. 
If we define 
\begin{equation}\label{eqn:aicatoaia}
m_k(f_1,\cdots, f_k) (x) := (-1)^{|f_1|' + \cdots + |f_k|'} (f_1 \otimes \cdots \otimes f_k) (\Delta_k (x)),
\end{equation}
then one can easily check that $m_k(f_1,\cdots, f_k)$ is bounded and $m_k$ is filtered.
Finally, $\{m_k\}$ satisfies $\AI$-relations by dualizing the relations among $\{\Delta_k\}$. Hence the dual of a filterd $\AI$-coalgebra over $\Bbbk_\Lambda$ is naturally a filtered $\AI$-algebra over $\Bbbk_\Lambda$.

\begin{remark}\label{rmk:signdiffdg}
The duality between $A_\infty$-algebra and coalgebra \eqref{eqn:aiatoaica} and \eqref{eqn:aicatoaia} involves nontrivial signs.
On the other hand, a dga $(V,d, \, \cdot \,)$ and its dual dg-coalgebra $(V^\vee, \delta, \Delta)$ are related in the following way:
$$(\delta f) (x) = (-1)^{|f|} f(dx), \quad (\Delta f) (x \otimes y) =  f ( x \cdot y)
%\sum (-1)^{|..|} g_i  (h_i (y) x_i) 
$$
for $f \in V^\vee$ and $x\in V$.
This is compatible with the sign difference between $A_\infty$ and dg-conventions in our setup.
\end{remark}

\section{Maurer-Cartan space of a Lagrangian and Koszul dual dga}

In this section, we identify the localized mirror obtained by Maurer-Cartan formalism of a Lagrangian $\bL$ (see Section \ref{sec:mcspaceofLag}) with the Koszul dual of the Floer complex of $\bL$. Consider a graded unobstructed \emph{compact} Lagrangian $\mathbb{L}$ in a symplectic manifold $M$. It is possible that $\bL$ is a finite union of several irreducible Lagrangians, $\bL= \oplus_{i \in \Gamma} \bL_i$, in which case we take the base ring $\Bbbk$ to be $\Bbbk_\Lambda=\oplus_{i \in \Gamma} \Lambda \pi_i$. Thus the Floer complex $CF(\bL,\bL;\Lambda)$ is naturally a bimodule over $\Bbbk_\Lambda$.

Throughout, we will assume $CF(\bL,\bL;\Lambda)$ is minimal and supported in nonnegative degrees only, and that all the immersed generators have positive degrees. In particular, we have $CF^0 (\bL,\bL) \cong \Bbbk_\Lambda$ spanned by the unit class, or more precisely the sum of units for irreducible components of $\bL$. The projection to the unit component $CF(\bL,\bL ;\Lambda) \to CF^0 (\bL,\bL ;\Lambda)$ defines a canonical augmentation $\epsilon : CF(\bL,\bL;\Lambda) \to \Bbbk_\Lambda$.

\subsection{The Maurer-Cartan algebras of Lagrangian revisited}\label{subsec:MCcobar}
For simplicity, let us write $V_\bL=CF(\bL,\bL;\Lambda)$ from now on, which is a filtered $\AI$-algebra over $\Bbbk_\Lambda$ as in the setting of the previous section. Recall from \ref{subsec:fukaialg} that $V_\bL$ is equipped with a non-Archimedean valuation which vanishes on $H^\ast (\bL;\C)$ and the standard geometric generators associated with self-intersections.
By our assumption, $V_\bL$ is supported on nonnegative degrees only, and $V^0_{\bL} \cong \Bbbk_\Lambda$.
To make the exposition more explicit, we fix generators $X_1,\cdots, X_N$ of $CF^{>0} (\bL,\bL)$ with $\nu (X_i)=0$ such that
\begin{equation*}\label{eqn:coordTQ}
\begin{array}{lcl}
V_\bL^1 &=& \mathrm{span} \langle  X_1,\cdots, X_l  \rangle     \\
V_\bL^{\geq 2} &=& \mathrm{span} \langle X_{l+1} ,\cdots, X_N \rangle.
\end{array}
\end{equation*}

Recall that the dual $CF(\bL,\bL)^\vee$ is endowed with a structure of a filtered  $A_\infty$-coalgebra. 
For simplicity, we write $V_{\bL}^\vee$ for $CF(\bL,\bL)^\vee$ in what follows. 
$V_{\bL}^\vee$ has counit and coaugmentation by dualizing those of $V_\bL$. We write $\eta: \Bbbk_\Lambda \to V_{\bL}^\vee$ for the coaugmentation, whose image is the (scalar multiples of) coordinate function for the unit class in $V_\bL$. 

We can choose a splitting $V_{\bL}^\vee = \overline{V_{\bL}^\vee} \oplus \Bbbk_\Lambda$ using the fixed generators above, where $ \overline{V_{\bL}^\vee}=V_{\bL}^\vee / \eta(\Bbbk_\Lambda)$.
Namely,
%
%To fix generators of $\bar{C}_\bL$, 
we take the coordinate function $x_i$ for $X_i$ as generators of  $\overline{V_{\bL}^\vee}$. Recall that the degree of $x_i$ is taken to be $|x_i|:=-|X_i|$.
The same notation $x_i$ has already been used for a coefficient in $b$ in the Maurer-Cartan algebra in \ref{subsec:MCeqns}, but we will see that the two can be naturally identified. 
For instance, if $\bL$ is a Lagrangian torus, a degree 1 generator $x_i$ can be thought of as a coordinate for the 1-cocycles $d \theta_i$ whereas other $x_{j}$'s are dual to wedge products among $d \theta_i$'s.

The coalgebra structure on $V_{\bL}^\vee$ has the following concrete description. 
Suppose that we have nontrivial coefficient $c_v \in \Bbbk_\Lambda$ of $X_i$ in the $A_\infty$-operation on $v:= X_{a_1} \otimes \cdots \otimes X_{a_k}$:
\begin{equation}\label{eqn:mkTQ}
m_k (X_{a_1}, \cdots, X_{a_k}) =\cdots+  c_v X_i + \cdots.
\end{equation}
%where $c_v \in \Lambda$ comes from holomorphic disk counting. 
For each such a tuple $v$, we have one summand $  x_{a_k} \otimes \cdots \otimes x_{a_1}$ in $\Delta_k (x_i)$:
$$ \Delta_k (x_i)= \cdots + c_v ( x_{a_k} \otimes \cdots \otimes x_{a_1} ) + \cdots.$$
%Note that $val(c_v) \geq 0$ for any $v$, and $\{\Delta_k\}_{k \geq 1}$  are filtered.% (as well as $\{m_k\}_{k \geq 1}$) is universally bounded.

The cobar construction of $(V_{\bL}^\vee,\eta)$ produces a dg algebra $\Omega V_{\bL}^\vee$. Its underlying $\Bbbk_\Lambda$-module $\overline{T} (s^{-1}  \overline{V_{\bL}^\vee})$ is generated by  formal (noncommutative) series in $x_i$'s with a fixed degree such that valuations of coefficients are bounded below. Note that in $\overline{T} (s^{-1} \overline{V_{\bL}^\vee})$, the degree of $x_i$ should be inverse-shifted, that is, $|x_i|'' := |x_i| +1$.
We will often omit the tensor product symbol, and simply write $ x_{a_k} \otimes \cdots \otimes x_{a_1}:=x_{a_k} \cdots  x_{a_1}$ to denote the corresponding element in $(s^{-1} V_{\bL}^\vee)^{\otimes k}$. 
%Notice that the symbol $s^{-1}$ is also omitted. 
Then $\delta$ is given by
\begin{equation}\label{eqn:geomdelta1}
\delta(x) = \sum_k \sum_{v} c_v x_{a_k} \cdots x_{a_1} 
\end{equation}
where the inner sum is taken over all $v= X_{a_1} \otimes \cdots \otimes X_{a_k}$ such that
$ m_k (X_{a_1} ,\cdots, X_{a_k}) = \cdots + c_v X +\cdots$
with $c_v \neq 0$. 
%It is obvious from the construction of $\AI$-operations on $CF(\bL,\bL)$ that \eqref{eqn:geomdelta1} is universally bounded. (In fact, $\nu (c_v) \geq 0$ for all $v$.)
%, and hence if we set the valuation level of $\underline{C_L}$ to be any positive number, $\delta(x)$ always converges.

Ignoring the dg-algebra structure, $\Omega V_{\bL}^\vee$ can be thought of as  the formal function ring of the $\Bbbk_\Lambda$-bimodule $CF(\bL,\bL)$ consisting of  {\em noncommutative} power series. Notice that constant functions are also included as $T(s^{-1} \overline{V_{\bL}^\vee}) = \Bbbk \oplus \left(\oplus_{k \geq 1} (s^{-1} \overline{V_{\bL}^\vee})^{\otimes k} \right)$. We show that the dg-structure on $\Omega V_{\bL}^\vee$ encodes the obstruction of a Maurer-Cartan deformation of the $\AI$-algebra $CF(\bL,\bL)$.

\begin{prop}\label{prop:mcalgequalbvvee}
Let $\bL$ be a graded unobstructed immersed Lagrangian such that $CF^{<0} (\bL,\bL)=0$ and every immersed generators have positive degrees.
Consider the dg-algebra $\Omega V_{\bL}^\vee$ for $V_{\bL}^\vee= CF(\bL,\bL)^\vee$.
Then the 0-th cohomology $H^0 ( \Omega V_{\bL}^\vee,\delta)$ is isomorphic to the Maurer-Cartan algebra $A_\bL$ of $\bL$.
\end{prop}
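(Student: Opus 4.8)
The plan is to extract $H^0(\Omega V_\bL^\vee,\delta)$ purely from the grading of the cobar construction. First I would record the degrees: in $\Omega V_\bL^\vee = \overline{T}(s^{-1}\overline{V_\bL^\vee})$ the variable $x_i$ dual to $X_i$ has inverse-shifted degree $|x_i|'' = 1 - |X_i|$, so a word $x_{a_1}\cdots x_{a_k}$ sits in degree $k - \sum_{i=1}^{k}|X_{a_i}|$. Since $V_\bL$ is concentrated in nonnegative degrees with $V_\bL^0 \cong \Bbbk_\Lambda$ and every other generator (immersed ones included) in degree $\geq 1$, each factor obeys $|X_{a_i}| \geq 1$. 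Hence every word has degree $\leq 0$; it has degree $0$ exactly when all $|X_{a_i}| = 1$, and degree $-1$ exactly when one factor has degree $2$ and the rest degree $1$. This identifies $(\Omega V_\bL^\vee)^0$ with the completed series ring $\Bbbk_\Lambda\{\!\{x_1,\ldots,x_l\}\!\}$ in the degree-$1$ duals, which is the underlying module of $A_\bL$, and describes $(\Omega V_\bL^\vee)^{-1}$ as the completed span of words $u\,x_j\,w$ with $u,w$ in the degree-$1$ duals and $x_j$ a single degree-$2$ dual.

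Next I would pin down $\delta$ in these two degrees. The crucial point is that $\delta(x_b) = 0$ for each degree-$1$ dual $x_b$: by \eqref{eqn:geomdelta1} its value is controlled by the coefficient of $X_b$ in the operations $m_k(X_{a_1},\ldots,X_{a_k})$, but the degree identity $\deg m_k = 2-k$ together with $|X_{a_i}| \geq 1$ leaves no room for a degree-$1$ output, so that sum is empty. As $\delta$ is a derivation and $(\Omega V_\bL^\vee)^0$ is generated by the $x_b$'s, $\delta$ vanishes on all of degree $0$. Therefore $\ker\bigl(\delta\colon (\Omega V_\bL^\vee)^0 \to (\Omega V_\bL^\vee)^1\bigr) = (\Omega V_\bL^\vee)^0$ and $H^0 = (\Omega V_\bL^\vee)^0 / \mathrm{im}\bigl(\delta\colon (\Omega V_\bL^\vee)^{-1} \to (\Omega V_\bL^\vee)^0\bigr)$.

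I would then compute the image out of degree $-1$. For a word $u\,x_j\,w$ the Leibniz rule and $\delta(u) = \delta(w) = 0$ collapse $\delta(u\,x_j\,w)$ to $u\,\delta(x_j)\,w$, with trivial Koszul sign since $u$ has degree $0$. By \eqref{eqn:geomdelta1}, $\delta(x_j) = \sum_k \sum_v c_v\,x_{a_k}\cdots x_{a_1}$, where $c_v$ is the coefficient of the degree-$2$ generator $X_j$ in $m_k(X_{a_1},\ldots,X_{a_k})$; comparing with the expansion of $m_1(b)+m_2(b,b)+\cdots$ under the reordering convention \eqref{eqn:earconv} shows that $\delta(x_j)$ is precisely the Maurer--Cartan coefficient $f_j$ of $X_j$. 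Thus $\delta(u\,x_j\,w) = u\,f_j\,w$, and the image out of degree $-1$ is exactly the set of completed two-sided combinations $\sum \lambda_{k,I,J}\,x^I f_k\,x^J$, namely the ideal $\langle\langle f_1,\ldots,f_{N'}\rangle\rangle$ of \eqref{eqn:glfilhlfirst}. Passing to the quotient gives $H^0(\Omega V_\bL^\vee,\delta) \cong \Bbbk_\Lambda\{\!\{x_1,\ldots,x_l\}\!\} / \langle\langle f_1,\ldots,f_{N'}\rangle\rangle = A_\bL$ (Definition \ref{def:mcalgell}).

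The step I expect to demand the most care is the topological matching rather than the algebra. One must check that the boundedness condition built into $\overline{T}(s^{-1}\overline{V_\bL^\vee})$ (Definition \ref{def:tensorcocomple}) reproduces, degree by degree, the boundedness conditions defining both $\Bbbk_\Lambda\{\!\{x_1,\ldots,x_l\}\!\}$ and the \emph{closed} ideal $\langle\langle f_1,\ldots,f_{N'}\rangle\rangle$, so that the image out of degree $-1$ is genuinely closed and coincides with the closure of the algebraically generated ideal, not a proper dense subspace nor a larger one. Confirming that $\delta$ is a filtered map with the valuation estimate that lets image computations commute with the completions is where the filtered framework of Section 3 must be invoked; by contrast the sign bookkeeping is immediate here, since all shifted degrees $|X_{a_i}|'$ vanish in the relevant range.
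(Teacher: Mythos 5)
Your proposal is correct and follows essentially the same route as the paper's proof: identify the degree-$0$ and degree-$(-1)$ pieces of $\Omega V_{\bL}^\vee$ from the grading assumptions, observe that $\delta$ vanishes on degree $0$, match $\delta(x_j)$ with the Maurer--Cartan coefficient $f_j$ via \eqref{eqn:earconv} and \eqref{eqn:geomdelta1}, and identify the image of degree $-1$ with the closed ideal \eqref{eqn:glfilhlfirst}. The topological point you flag at the end is handled in the paper exactly as you anticipate — the bounded-coefficient description of the degree-$(-1)$ component makes its $\delta$-image coincide by definition with $\langle\langle f_1,\ldots,f_{N'}\rangle\rangle$ — so there is no gap.
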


\begin{proof}
We prove this for $\bL$ with a single irreducible component only, and it can be easily generalized to other cases. By definition, the degree 0 component of $\Omega V_{\bL}^\vee$ is given by  $\overline {T} s^{-1} \left(CF^1 (\bL,\bL)\right)^\vee$,
% modulo the image of $\delta$. $\overline {T} s^{-1} \left(CF^1 (\bL,\bL)\right)^\vee$ 
which consists of infinite series in $x_1,\cdots, x_l$ with bounded coefficients, i.e. 
$$\sum_{i=k}^{\infty} \sum_{v \in \{1,\cdots, l\}^k}  c_v x_v \quad \mbox{with} \quad \inf\{c_v : i=1,2,\cdots,  v \in \{1,\cdots,l\}^k \} > -\infty.$$ 
This is precisely $\Lambda\{\!\{ x_1,\cdots, x_l \}\!\}$ in \eqref{eqn:MCalgL}.

Let $X_{l+1}, \cdots, X_{N'}$ be the degree 2 generators of $CF(\bL,\bL)$. Then the degree (-1) component of $\Omega V_{\bL}^\vee$ is the set of infinite series consisting of words in $x_1,\cdots, x_l, x_{l+1},\cdots, x_{N'}$ which have exactly one $x_{j}$ for $l+1 \leq j \leq N'$. On the other hand, it is obvious from the construction that $\delta(x_j)=f_j (x_1,\cdots, x_l) \in  ({\Omega}V_{\bL}^\vee)_0$ for $x_j$ with $l+1 \leq j \leq N'$ 
%(i.e., $|x_j|''=-|X_j|+1 = -1$) 
if and only if the following holds:
$$ m_0(1) + m_1(b) + m_2(b,b) + \cdots =  f_1 (x_1,\cdots,x_l) X_{l+1}  + \cdots + f_{N'} (x_1,\cdots, x_l) X_{N'}$$
where $b= \sum_{i=1}^l x_i X_i$, and we linearly expand the left hand side by \eqref{eqn:earconv}.

%Recall from Definition \ref{def:coalgtbar} that $\Omega V_{\bL}^\vee$ is given as the subspace $\overline{T} \bar{V_{\bL}^\vee}$ of the-length completion, consisting of the series whose terms are bounded. 
Now, a general degree 1 element  $f \in (\Omega V_{\bL}^\vee)_1$ can be written as 
$$f=\sum_{k=0}^{\infty} g_k(x_1,\cdots, x_l) x_{i_k} h_k(x_1,\cdots, x_l)  $$
for some polynomials $g,h$ and $l+1 \leq i_k \leq N'$, where the lengths of terms become greater than any fixed number after certain stage, and $T$-adic valuations of terms are bounded below.
Since $\delta(g_k) = \delta(h_k)=0$ in the above expression by degree reason, $\delta(f)$ has precisely the same description as the series \eqref{eqn:glfilhlfirst} after applying the Leibnitz rule. Hence, the image of $(\Omega V_{\bL}^\vee)_1$ under $\delta$ coincides with the closure of the two-sided ideal generated by $\delta(x_{l+1}), \cdots, \delta(x_{N'})$. Therefore the $0$-th $\delta$-cohomology computes
$$ \dfrac{ \Lambda \{\!\{ x_1, \cdots, x_l\}\!\}}{\langle \langle \delta(x_{l+1}), \cdots, \delta (x_{N'}) \rangle \rangle}= \dfrac{ \Bbbk \{\!\{ x_1, \cdots, x_l\}\!\}}{\langle \langle f_1, \cdots, f_{N'} \rangle \rangle},$$
which is exactly the Maurer-Cartan algebra of $\bL$.
\end{proof}

In general, $\Omega  V_\bL^\vee $ may carry nontrivial information in higher degree component, and hence taking the $0$-th cohomology $A_\bL$ may lose some geometric information of the Lagrangian $\bL$. For instance, if $\bL$ is simply connected, then $A_\bL$ is trivial.

\begin{defn}
We will call $\Omega V_{\bL}^\vee$ the Maurer-Cartan dga of $\bL$, and we denote it by $\mathcal{A}_\bL$.
\end{defn}

We remark that the curved version of this dga has been already considered in \cite{CHL2}, which includes the superpotential as a curvature term.
The Maurer-Cartan dga has another description using the bar construction. In general, one has the following purely algebraic statement.

\begin{prop}\label{prop:omevbvd}
For a finitely generated $\AI$-algebra $V$, $\Omega (V^\vee)$ is isomorphic to $(BV)^\vee$.
\end{prop}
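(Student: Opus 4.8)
The plan is to establish the isomorphism $\Omega(V^\vee) \cong (BV)^\vee$ by unwinding both sides into concrete descriptions in terms of a fixed basis and checking that the dg-algebra structures match. First I would recall that $BV = T(s\bar V)$ as a $\Bbbk_\Lambda$-module, so that $(BV)^\vee = (T(s\bar V))^\vee$. By Lemma \ref{lem:bartvhat1} applied to the finitely generated module $s\bar V$, we have a natural isomorphism of $\Bbbk_\Lambda$-bimodules
\begin{equation*}
(T(s\bar V))^\vee \cong \overline{T}\bigl((s\bar V)^\vee\bigr) = \overline{T}(s^{-1}\overline{V^\vee}),
\end{equation*}
where the last identification uses that $(s\bar V)^\vee = s^{-1}(\bar V^\vee) = s^{-1}\overline{V^\vee}$ (the suspension dualizes to an inverse-suspension, consistent with the degree convention $|x_i| = -|X_i|$ recorded in the excerpt). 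The right-hand side is, by Definition \ref{def:coalgtbar}, exactly the underlying $\Bbbk_\Lambda$-module of $\Omega(V^\vee)$. So the first key step is to verify that this chain of identifications is an isomorphism of filtered $\Bbbk_\Lambda$-bimodules, respecting the boundedness conditions on both sides — this is essentially a repackaging of Lemma \ref{lem:bartvhat1} together with the definition of the topological dual of a graded module.

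Next I would check that the two algebra structures agree. The product on $\Omega(V^\vee)$ is concatenation of tensors (Definition \ref{def:coalgtbar}), while the product on $(BV)^\vee$ is the dual of the coproduct $\Delta$ on the bar coalgebra given by deconcatenation \eqref{eqn:DeltaTV}. The content here is that the dual of deconcatenation is concatenation: for dual generators this is immediate from the pairing \eqref{eqn:tenftenl}, and one extends by continuity/boundedness. The only point requiring care is bookkeeping of the order-reversal built into \eqref{eqn:tenftenl} and of the Koszul signs from the suspension, which I would handle by tracking everything on basis elements $x_{a_k}\otimes\cdots\otimes x_{a_1}$ as in the explicit description preceding Proposition \ref{prop:mcalgequalbvvee}.

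The third step is to match the differentials. The bar differential $d$ on $BV$ is built from the $m_k$ via \eqref{eqn:codiffTV}, and its dual (in the dg-convention of Remark \ref{rmk:signdiffdg}, $(\delta f)(x) = (-1)^{|f|} f(dx)$) should reproduce the cobar differential $\delta$ of \eqref{eqn:cobardiff}. Because $d$ inserts $m_l(x_{i+1},\dots,x_{i+l})$ into a single tensor slot, dualizing turns each internal $m_l$ into the corresponding $\Delta_l$ on $V^\vee$ via \eqref{eqn:aiatoaica}, applied in a single slot of a word in $\Omega(V^\vee)$ — which is precisely the Leibniz-type formula \eqref{eqn:cobardiff}. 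So the verification reduces to matching, term by term, the dualized coproduct $\Delta_l$ against $m_l$ and checking that the signs $(-1)^{|x_1|'+\cdots+|x_{i-1}|'}$ in \eqref{eqn:cobardiff} are exactly what the Koszul convention in \eqref{eqn:aiatoaica} and Remark \ref{rmk:signdiffdg} produce.

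I expect the main obstacle to be neither the algebra nor the combinatorics of deconcatenation, but rather the careful handling of the completions and boundedness conditions together with the signs. The subtle point is that $(BV)^\vee$ is a \emph{topological} dual — it consists of bounded functionals supported in finitely many degrees — and one must confirm that under the identification these are exactly the elements of $\overline{T}(s^{-1}\overline{V^\vee})$ satisfying the valuation bound \eqref{eqn:tbarvalsummand}, rather than all of $\widetilde T$ or $\hat\Omega$ (cf.\ Remark \ref{rmk:threecobar}). This is where finite generation of $V$ is essential: it guarantees that \eqref{eqn:tenftenl} is an isomorphism degree by degree and that the boundedness of a functional on $BV$ translates precisely into the bounded-below-valuation condition defining $\overline{T}$. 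I would therefore devote the most attention to verifying that the dual of the filtration on $BV$ matches the filtration defining $\overline{T}(s^{-1}\overline{V^\vee})$, since the purely formal identification of structures is routine once the underlying module isomorphism is pinned down correctly.
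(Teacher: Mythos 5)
Your proposal follows exactly the paper's argument: the paper's proof also invokes Lemma \ref{lem:bartvhat1} to identify the underlying filtered modules of $\Omega(V^\vee)$ and $(BV)^\vee$, and then dismisses the comparison of the operations and signs as elementary. Your write-up simply spells out in more detail the sign and boundedness checks that the paper leaves to the reader.
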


\begin{proof}
Lemma \ref{lem:bartvhat1} shows that both $\Omega (V^\vee)$ and $(BV)^\vee$ have the same underlying vector space.
%, which is the subspace of the length-completion of $T (V^\vee)$ formed by bounded series. 
The remainder of the proof is comparing the algebraic operations and their signs, which is elementary.
\end{proof}

Applying this to $V_\bL=CF(\bL,\bL)$, we see that the Maurer-Cartan dga of $\bL$ can be also expressed as $(B V_\bL )^\vee$, and we will mostly use this alternative description in what follows. The main advantage of doing so is that $(BV)^\vee$ makes sense even for an infinite dimensional $V$. 
Mimicking the proof of Proposition \ref{prop:mcalgequalbvvee}, one can derive a simple formula for the differential $d$ on this dga, whose proof is left as an exercise.
\begin{lemma}\label{lem:donbvvee}
For an $A_\infty$-algebra $V$, let $X_1, \cdots, X_N$ freely span $V$ over its coefficient ring, and formally write $\tilde{b} := \sum_{i=1}^N x_i X_i$. Then $d$ on $(BV)^\vee$ is implicitly given by $m(e^{\tilde{b}}) = \sum_{i=1}^{N}  (-1)^{|x_i|'} d(x_i) X_i$.
\end{lemma}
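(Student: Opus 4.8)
The plan is to reduce the statement to the explicit description of the cobar differential already exploited in the proof of Proposition \ref{prop:mcalgequalbvvee}, and then to match the two sides of the asserted identity coefficient by coefficient, the genuine content being the bookkeeping of Koszul signs. By Proposition \ref{prop:omevbvd} I would identify $(BV)^\vee$ with $\Omega(V^\vee)$, so that $d$ is the cobar differential $\delta$ of \eqref{eqn:cobardiff}. Since $\delta$ is a derivation for the concatenation product, it is determined by its values on the generators $x_i \in s^{-1}\overline{V^\vee}$, and on such a generator $\delta(x_i)$ is computed by dualizing $\{m_k\}$ through \eqref{eqn:aiatoaica}. Concretely, if $m_k(X_{a_1},\ldots,X_{a_k}) = \cdots + c_v X_i + \cdots$ with $v = X_{a_1}\otimes\cdots\otimes X_{a_k}$, then, exactly as in \eqref{eqn:geomdelta1}, the monomial $x_{a_k}\cdots x_{a_1}$ enters $d(x_i)$ with structure constant $c_v$, up to the sign discussed below.

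First I would expand the left-hand side. Writing $e^{\tilde{b}} = \mathbf{1} + \tilde{b} + \tilde{b}\otimes\tilde{b} + \cdots$ and applying the convention \eqref{eqn:earconv} gives
$$m(e^{\tilde{b}}) = \sum_{k\geq 1}\sum_{a_1,\ldots,a_k} x_{a_k}\cdots x_{a_1}\, m_k(X_{a_1},\ldots,X_{a_k}).$$
Collecting the $X_i$-component and substituting the structure constants $c_v$ above, the coefficient of $X_i$ in $m(e^{\tilde{b}})$ is precisely $\sum_k\sum_v c_v\, x_{a_k}\cdots x_{a_1}$, i.e. the same \emph{unsigned} series that describes $d(x_i)$; this is the termwise matching underlying Proposition \ref{prop:mcalgequalbvvee}, now carried out in every degree rather than only degree $0$. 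Thus at the level of structure constants the two sides agree, and the entire content of the lemma is the global prefactor $(-1)^{|x_i|'}$.

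The hard part will therefore be the sign reconciliation. The reversal of tensor order in $x_{a_k}\cdots x_{a_1}$ is forced by the pairing \eqref{eqn:tenftenl} defining the dual and is already present in both expansions, so it contributes no net sign. What remains is to compare the Koszul sign $(-1)^{|X_{a_1}|'+\cdots+|X_{a_k}|'}$ appearing in the dualization \eqref{eqn:aiatoaica} of $m_k$ against the signs implicit in \eqref{eqn:earconv} and in the inverse suspension passing from $V^\vee$ to $s^{-1}\overline{V^\vee}$ (which shifts $|x_i|=-|X_i|$ to $|x_i|''=|x_i|+1$). I would do this by fixing a single word $v$, reading off directly from \eqref{eqn:cobardiff} and \eqref{eqn:aiatoaica} the sign with which $x_{a_k}\cdots x_{a_1}$ enters $\delta(x_i)$, and comparing it with the sign with which the same monomial enters the $X_i$-component of $m(e^{\tilde{b}})$; a short degree count (using $|x_i|=-|X_i|$ together with the relation between $A_\infty$- and dg-sign conventions recorded in Remark \ref{rmk:signdiffdg}) should show the discrepancy to be exactly $(-1)^{|x_i|'}$, independent of $v$. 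Since the identification of coefficients is termwise and both series lie in $(BV)^\vee$ by the boundedness established in Lemma \ref{lem:bartvhat1}, assembling over all $i$ and all $v$ yields the stated formula.
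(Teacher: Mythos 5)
Your proposal is correct and follows exactly the route the paper intends: the paper gives no proof of Lemma \ref{lem:donbvvee} beyond the instruction to mimic the proof of Proposition \ref{prop:mcalgequalbvvee}, and your argument carries that out in all degrees (identify $d$ with the cobar differential via Proposition \ref{prop:omevbvd}, match the structure constants of $\delta(x_i)$ against the coefficient of $X_i$ in the expansion of $m(e^{\tilde{b}})$ under \eqref{eqn:earconv}, and reconcile signs using $|x_i|=-|X_i|$, the degree $2-k$ of $m_k$, and Remark \ref{rmk:signdiffdg}). The sign bookkeeping you outline does close up as claimed — the dg-convention dual differential contributes $(-1)^{|X_i|+1}=(-1)^{|x_i|'}$ per generator — so the only thing left implicit is that routine computation.
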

 
\subsection{Koszul dual algebras and generalized Maurer-Cartan algebras}
For a unital algebra $A$ over a field $\Bbbk$ with an augmentation $\epsilon : A\to \Bbbk$, its Koszul dual algebra is defined by $A^{\textnormal{\textexclamdown}} =  \mathrm{Ext}^\ast (\Bbbk, \Bbbk)$ equipped with a Yoneda product. Here, the $A$-module structure on  $\Bbbk$ is induced by $\epsilon$ which splits the unit $\Bbbk  \to A$.
%Of course, in general one cannot expect that $A^{\textnormal{\textexclamdown}}$ does not miss  any algebraic information on $A$, 
The main question around $A^{\textnormal{\textexclamdown}}$ is whether or not taking Koszul dual twice comes back to $A$ itself. Such algebras has been extensively studied, and generalized into various directions since it first appeared in \cite{Pri}.

Our interest lies in the $\AI$-(or dg-) version of this construction, especially on the chain level rather than the cohomology $\mathrm{Ext}$-algebras. Koszul duality in this context has already been investigated in many literatures such as \cite{LPWZ} or more categorical approach \cite{Pos}, etc. In this section, we recall the Koszul dual of $\AI$-algebras adapted to our filtered setting, and explain its relationship with the Maurer-Cartan formalism.

We begin with $(V,\varepsilon)$, a unital filtered augmented $\AI$-algebra over $\Bbbk_\Lambda$ with a valuation $\nu$, and set $\bar{V}:=\ker \varepsilon$. As before, $\Bbbk_\Lambda$ can be regarded as an $\AI$-module over $V$ via the augmentation $\epsilon$. Consider the set 
\begin{equation}\label{eqn:preaihoms}
\hom_V (\Bbbk_\Lambda,\Bbbk_\Lambda) = \{ \{ f_{k,1} \}_{k \geq 1} \colon f_{k,1}: \Bbbk_\Lambda \otimes V^{\otimes k}  \to \Bbbk_\Lambda\,\,\mbox{filtered}	 \}
\end{equation}
of pre-$\AI$ homomorphisms from $\Bbbk_{\Lambda}$ to itself. 
Recall from \ref{eqn:mathfrakm1} and \ref{eqn:mathfrakm2} that $\hom_V (\Bbbk_\Lambda,\Bbbk_\Lambda)$ is an $\AI$-dga with a differential $\mathfrak{M}_1$ and the multiplication $\mathfrak{M}_2$. 
 
\begin{defn}
For an augmented $A_\infty$-algebra $(V,\epsilon)$ over $\Bbbk_\Lambda$, 
$\hom_V (\Bbbk_\Lambda,\Bbbk_\Lambda)$ (or its $A_\infty$ quasi-isomorphism class) is called the Koszul dual of  $V$, which is a $A_\infty$-dga with  operations given by \eqref{eqn:mathfrakm1} and \eqref{eqn:mathfrakm2}. (One needs to put additional signs as in Remark \ref{rmk:signdgai} to fit into the standard dg-sign conventions.) We will denote it by $E(V)$. 
\end{defn}

We shall see below that the Koszul dual  for an $\AI$-algebra $V=CF(\bL,\bL)$ indeed coincides with the Maurer-Cartan dga of $\bL$. 
Recall from \ref{subsec:aiinlambda} that in our $T$-adic setting, $\hom_V (\Bbbk_\Lambda,\Bbbk_\Lambda)$ consists of multilinear maps that decompose into finitely many homogeneous components, each of which is bounded. 
Apart from the boundedness issue (which does not create additional difficulty here), it should be well-known. See, for e.g., \cite[Proposition 14]{EL-duality} for the same statement in unfiltered setting.

\begin{prop}\label{prop:evequalsbv}
$E(V)=\hom_V (\Bbbk_\Lambda,\Bbbk_\Lambda)$ is quasi-isomomrphic to $(BV)^\vee$. Thus it is quasi-isomorphic to $\Omega (V^\vee)$ when $V$ is finitely generated.
\end{prop}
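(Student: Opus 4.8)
The plan is to exhibit a natural morphism of dgas relating $E(V) = \hom_V(\Bbbk_\Lambda,\Bbbk_\Lambda)$ and $(BV)^\vee$ and to check that it is a quasi-isomorphism; the second assertion then follows immediately by combining with Proposition \ref{prop:omevbvd}, which already identifies $\Omega(V^\vee)$ with $(BV)^\vee$ for finitely generated $V$. The conceptual point is that $(BV)^\vee$ is nothing but the convolution dga $\hom_{\Bbbk_\Lambda}(BV,\Bbbk_\Lambda)$ of the bar coalgebra $BV = T(s\bar V)$: its product is dual to the coproduct $\Delta$ of \eqref{eqn:DeltaTV} and its differential is dual to the bar codifferential $d$ of \eqref{eqn:codiffTV}. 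Thus the whole content is that a pre-$A_\infty$-module homomorphism $\{f_{1|k}\}$ of the trivial module $\Bbbk_\Lambda$ is the same datum as a bounded functional on $BV$, once the suspension and the reduction to $\bar V$ are accounted for.

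First I would set up the underlying correspondence of graded $\Bbbk_\Lambda$-modules. To a collection $\{f_{1|k}\colon \Bbbk_\Lambda \otimes V^{\otimes k} \to \Bbbk_\Lambda\}$ I associate the functional on $T(s\bar V)$ whose value on a word $s x_1 \otimes \cdots \otimes s x_k$ (with $x_i \in \bar V$) is $f_{1|k}(\mathbf 1, x_1,\ldots,x_k)$, and conversely. The filtered (boundedness) condition imposed on pre-morphisms in \ref{subsec:aiinlambda} is precisely the condition \eqref{eqn:boundeddual} defining the topological dual, so no analytic discrepancy arises between the two sides; this is the content of the parenthetical remark in the statement, and Lemma \ref{lem:bartvhat1} supplies exactly the bookkeeping describing which infinite series in $\overline{T}(V^\vee)$ are admissible.

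Next I would match the dg-structures. Because $\Bbbk_\Lambda$ carries the trivial module structure induced by the strict augmentation $\varepsilon$, the module operations $n_{1|k}$ vanish on $\bar V$, so the differential $\mathfrak{M}_1$ of \eqref{eqn:mathfrakm1} collapses to its first line, namely the term that feeds a consecutive block of inputs into an internal $m_{k_2}$. Under the correspondence above this is exactly the dual of the bar codifferential $d$ of \eqref{eqn:codiffTV}, and the composition $\mathfrak{M}_2$ of \eqref{eqn:mathfrakm2} is likewise dual to the deconcatenation coproduct $\Delta$, i.e.\ the concatenation product on $(BV)^\vee$. Inserting the sign conversions of Remark \ref{rmk:signdgai} and Remark \ref{rmk:signdiffdg} promotes these identifications to an honest morphism of dgas onto the normalized (unital) part of $E(V)$.

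Finally I would upgrade this to a quasi-isomorphism. On cochains that vanish whenever a unit $\mathbf 1 \in V$ is inserted the map is already an isomorphism; in general one compares pre-morphisms built on the full $V$ with those built on $\bar V$, which is the standard comparison between the non-reduced and reduced bar complexes, contractible via the homotopy that deletes unit entries. The main obstacle I anticipate is organizational rather than conceptual: keeping the suspension shift on $BV$ consistent with the $(1-k+d)$-grading of module pre-morphisms so that $\mathfrak{M}_1,\mathfrak{M}_2$ and the dualized $d,\Delta$ agree on the nose with correct Koszul signs, and confirming that the filtration-boundedness of a pre-morphism transports exactly to membership in the topological dual. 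Neither step introduces genuinely new difficulty beyond the unfiltered statement recorded in \cite[Proposition 14]{EL-duality}.
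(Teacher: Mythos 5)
Your proposal is correct and follows essentially the same route as the paper: identify pre-$A_\infty$-morphisms of the trivial module $\Bbbk_\Lambda$ with bounded functionals on $T(s\bar V)$, check that the boundedness condition matches the topological dual, and verify that $\mathfrak{M}_1$ and $\mathfrak{M}_2$ dualize the bar codifferential and the deconcatenation coproduct up to the sign conversions of Remarks \ref{rmk:signdgai} and \ref{rmk:signdiffdg}. The only (inessential) divergence is at the unit: the paper invokes the unital property to make the correspondence a bijection outright, while you pass through the reduced/non-reduced bar comparison and a contracting homotopy, which still suffices since only a quasi-isomorphism is asserted.
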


\begin{proof}
Since $V^{\otimes k} \otimes \Bbbk_\Lambda \cong V^{\otimes k}$ and the action of unit is pre-determined by the unital property, a homogenous element $\{f_{k,1} :  \Bbbk_\Lambda \otimes V^{\otimes k}  \to \Bbbk_\Lambda \}_{k \geq 1}$ of degree $1-k+d$ in $E(V)$ is equivalent to a bounded linear map $ BV=T(s \bar{V}) \to \Bbbk_\Lambda$ of degree $d$. 
(Formally, one can write $f= \oplus_k f_{k,1}$.) Taking this model, we obtain a bijective correspondence between $\hom_V (\Bbbk_\Lambda,\Bbbk_\Lambda)$ and $(BV)^\vee$ . 

We next compare the algebraic operations.
The differential $M_1$ on $BV^\vee$ is given by
$$M_1 ( f ) (X_1 \otimes \cdots \otimes X_k) = d ( f ) (X_1 \otimes \cdots \otimes X_k) = (-1)^{|f|}   f \left( d (X_1 \otimes \cdots \otimes X_k ) \right)$$
$$=(-1)^{|f|} f\left( \sum (-1)^{|\mathbf{X}^{(1)}|'} \mathbf{X}^{(1)} \otimes m( \mathbf{X}^{(2)} ) \otimes \mathbf{X}^{(3)}\right)$$ 
which agrees with  \eqref{eqn:mathfrakm1} under our correspondence above, since the last two terms on the right hand side of \eqref{eqn:mathfrakm1} vanish for inputs from the augmentation ideal.
Note that the hidden factor $\Bbbk_\Lambda$ absorbed in $V^{\otimes k}$ has a nontrivial shifted degree in view of $\hom_V (\Bbbk_\Lambda,\Bbbk_\Lambda)$, so $|f|$ here plays a role of $|\psi|'$ in \eqref{eqn:mathfrakm1}. 
Finally, taking into account the sign change \eqref{rmk:signdiffdg} between $\AI$- and dg-conventions, the product $M_2$ on $BV^\vee$ is given by
$$ M_2 (f,  g) ( X_1 \otimes  \cdots \otimes X_k) =\sum  (-1)^{|f|} (f \otimes g) \left( \mathbf{X}^{(1)} \otimes  (\mathbf{X}^{(2)} \right) = \sum (-1)^{|f|}  f( g(\mathbf{X}^{(1)}) \mathbf{X}^{(2)} ),$$
and this matches with \eqref{eqn:mathfrakm2} under the correspondence.
\end{proof}

 Applying this to $V_\bL=CF(\mathbb{L},\mathbb{L})$ for a graded unobstructed Lagrangian $\bL$, we conclude that the Maurer-Cartan dga $\mathcal{A}_\mathbb{L}$ is nothing but (a dga model of) the Koszul dual of the $\AI$-algebra $CF(\mathbb{L},\mathbb{L})$.

Going back to classical Koszul duality theory for algebras, the important class of objects is, roughly speaking, a graded algebra $A$ over $\Bbbk$ such that $\hom_A (\Bbbk,\Bbbk)$ (graded with respect to our convention) has a trivial cohomology at every nonzero degree. In this case, $A$ is called a Koszul algebra, and its double Koszul dual gets back to $A$.
 
%This suggests us to use a Lagrangian $\bL$ whose Floer complex $V_\bL$ satisfy some , when constructing the mirror from its Maurer-Cartan algebra. 
On the other hand, recall that the Maurer-Cartan algebra $A_\bL$ of a Lagrangian $\bL$ neglects nonzero-degree part of the cohomology of $\hom_{V_\bL} (\Bbbk_\Lambda,\Bbbk_\Lambda)$.
We speculate that the localized mirror functor which can be understood as a functor
\begin{equation}\label{eqn:locmirfunct}
 V_\bL \, \mathrm{mod} \to A_\bL \, \mathrm{mod}
\end{equation}
establishes an equivalence if and only if the Floer complex $V_\bL$ satisfies 
an analogous condition to the Koszulity of an algebra, for e.g., the cohomology of $\hom_{V_\bL} (\Bbbk_\Lambda,\Bbbk_\Lambda)$ being supported at degree 0, only. 
Here, the left hand side of \eqref{eqn:locmirfunct} can be thought of as the subcategory of the compact Fukaya category generated by $\bL$.
For a general $\bL$, one may need to consider the extended mirror functor
$$ V_\bL  \, \mathrm{mod} \to \mathcal{A}_\bL  \, \mathrm{dg}\textnormal{-}\mathrm{mod}\quad M \to \hom_{V_\bL} (\Bbbk_\Lambda, M)$$
which reduces to \eqref{eqn:locmirfunct} by regarding $\hom_{V_\bL} (\Bbbk_\Lambda, M)$ as a module over the $0$-th cohomology of $\hom_{V_\bL} (\Bbbk_\Lambda, \Bbbk_\Lambda)$.

\subsection{Dual pairs of objects in $\AI$-categories}\label{subsec:kzdualalg}

Consider two objects $L=\oplus_{i =1}^r L_i$ and $G=\oplus_{i =1}^r G_i$ in a unital (filtered) $A_\infty$-category $\mathcal{C}$ over the field $\Lambda$. Here, each $L_i$ and $G_i$ are objects of $\mathcal{C}$, and one may need to replace $\mathcal{C}$ by its additive enlargement if necessary, in order to make sense of the direct sum. Thus, morphism spaces between $L$ and $G$ as well as their endomorphisms are naturally modules over $\Bbbk_\Lambda=\oplus_i \Lambda \langle \pi_i \rangle $.

Suppose $G$ and $L$ satisfy the following conditions.
\begin{itemize}
\item $\hom_\mathcal{C} (G,L) \cong \Bbbk_{\Lambda}$ as $\hom_\mathcal{C} (G,G)$-modules.
%\item $H^\ast (\hom_\mathcal{A} (G,G))$ is concentrated at degree 0.
%\item $G$ generates $\mathcal{A}$.
\item $\hom^{>0}_\mathcal{C} (G,G)=0$ and $\hom^{<0}_\mathcal{C} (L,L) =0$
\item $\hom_{\mathcal{C}} (L,L)$ is finitely generated over $\Bbbk_\Lambda$ with  $\hom^0_{\mathcal{C}} (L,L) \cong \Bbbk_\Lambda$.
\end{itemize}
%
%In this section, we will use capital letters $X_i$ for $\hom_\mathcal{C} (L,L)$ and small letter $y_j$ for $\hom_\mathcal{C} (G,G)$ to make the final formula neat. 
Such a pair $(G,L)$ can be thought of as a \emph{Koszul dual} pair in the  sense we explain now.

Since $\hom^{>0}_\mathcal{C} (G,G)=0$, the left $\hom_{\mathcal{C}} (G,G)$-module structure on $\hom_\mathcal{C} (G,L)$ given as
$$ n_{k |1}:=m_{k+1} : \hom_{\mathcal{C}}^{i_1} (G,G) \otimes \cdots \otimes \hom_{\mathcal{C}}^{i_k} (G,G) \otimes \hom_\mathcal{C} (G,L) \to \hom_\mathcal{C} (G,L)$$
must be trivial unless $k =1$ and $i_1 = \cdots i_k=0$, and we have $n_{1|1} (\one_{G} , P)=P$ for $P \in\hom_\mathcal{C} (G,L)$ where $\one_{G}$ is the unit on $\hom_{\mathcal{C}} (G,G)$. 
An analogous statement is true for the right $\hom_{\mathcal{C}} (L,L)$-module $\hom_\mathcal{C} (G,L)$, and in this case, the only nontrivial operation is the action of a scalar multiple of the unit in $\hom_{\mathcal{C}} (L,L)$.
% since $\hom^0_{\mathcal{C}} (L,L) \cong \Bbbk_\Lambda$ is generated by the unit.

\begin{lemma}\label{lem:augstrict}
Fix a generator $P$ of $\hom_\mathcal{C} (G,L)$ so that $\hom_\mathcal{C} (G,L) = \Bbbk_\Lambda \langle P \rangle$.
Define $\varepsilon=\varepsilon_L : \hom_\mathcal{C} (G,G) \to \Bbbk_\Lambda$ by
$$ m_2(Z,P)=\varepsilon (Z) \, P.$$
Then $\varepsilon $ is a strict augmentation on $\hom_{\mathcal{C}} (G,G)$. %Also, $\varepsilon : \hom_{\mathcal{C}} (L,L) \to \Bbbk_\Lambda $ given by $m_2(P,X) =  P\, \varepsilon (X)$ is a strict augmentation. 
%{\color{red} $\Bbbk_\Lambda$-module: left or right...}
\end{lemma}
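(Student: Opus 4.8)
The plan is to realize $\varepsilon$ as the unique nonzero component of a strict $\AI$-homomorphism $\hom_\mathcal{C}(G,G) \to \Bbbk_\Lambda$, obtained by feeding the $\AI$-relations \eqref{eqn:airelforLfooo} of $\mathcal{C}$ strings that terminate in the generator $P$ and recording which composites survive. I would begin with the degree bookkeeping. Since $\hom_\mathcal{C}(G,L) \cong \Bbbk_\Lambda$ is concentrated in degree $0$, the generator $P$ lies in degree $0$ and $m_1(P) = 0$; also $m_2(Z,P)$ lands in $\hom^{|Z|}_\mathcal{C}(G,L)$, which vanishes unless $|Z| = 0$, so $\varepsilon$ is automatically a filtered degree-$0$ map supported on $\hom^0_\mathcal{C}(G,G)$, and $m_2(\one_G,P)=P$ gives $\varepsilon(\one_G)=\one$. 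The crucial input is the vanishing already noted before the statement: for $k \ge 2$ and any $W_1,\dots,W_k \in \hom_\mathcal{C}(G,G)$ the module operation $m_{k+1}(W_1,\dots,W_k,P)$ is zero, because its output would sit in degree $\sum_j |W_j| + 1 - k$, forcing $\sum_j |W_j| = k-1 \ge 1$, which is impossible as every factor of $\hom_\mathcal{C}(G,G)$ has degree $\le 0$; this stays true when some $W_j$ is replaced by $m_1 W_j$.

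Next I would apply \eqref{eqn:airelforLfooo} to the string $(Z_1,\dots,Z_k,P)$ with $Z_i \in \hom_\mathcal{C}(G,G)$ and sort the composites $m_{k_1}(\cdots m_{k_2}(\cdots)\cdots)$ according to whether $P$ sits as the last entry of the outer $m_{k_1}$ or inside the inner $m_{k_2}$. In the first case the outer operation is a module action with $P$ in the last slot, so by the vanishing above it survives only when $k_1 = 2$, which forces the inner $m_{k_2}$ to absorb all of $Z_1,\dots,Z_k$; the sole survivor is $m_2(m_k(Z_1,\dots,Z_k),P) = \varepsilon(m_k(Z_1,\dots,Z_k))\,P$. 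In the second case the inner operation $m_{k_2}(\cdots,P)$ survives only as $m_2(Z_k,P) = \varepsilon(Z_k)P$, and then the outer operation is a module action on $Z_1,\dots,Z_{k-1}$ and this output, which is nonzero only when $k = 2$, giving $m_2(Z_1,m_2(Z_2,P))$. Every other term contains either the factor $m_1(P)=0$ or a module operation with at least two entries from $\hom_\mathcal{C}(G,G)$, hence drops out.

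Reading \eqref{eqn:airelforLfooo} off case by case then finishes the argument. For $k=1$ the only survivor is $\varepsilon(m_1 Z_1)\,P$, so $\varepsilon\circ m_1 = 0$; for each $k \ge 3$ the only survivor is $\varepsilon(m_k(Z_1,\dots,Z_k))\,P$, so $\varepsilon\circ m_k = 0$; and for $k=2$ the relation reduces to $\varepsilon(m_2(Z_1,Z_2))\,P = \pm\, m_2(Z_1,\varepsilon(Z_2)P)$, whence $\varepsilon(m_2(Z_1,Z_2)) = \pm\,\varepsilon(Z_1)\varepsilon(Z_2)$ after pulling the scalar $\varepsilon(Z_2)$ through the bimodule and using $m_2(Z_1,P)=\varepsilon(Z_1)P$. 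These three families of identities are precisely the conditions for the sequence $(\varepsilon,0,0,\dots)$ to be an $\AI$-homomorphism into $\Bbbk_\Lambda$ whose only nonzero structure map is multiplication, i.e.\ for $\varepsilon$ to be a strict augmentation.

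The step I expect to be the main obstacle is the $k=2$ identity. There one must combine the Koszul signs coming from \eqref{eqn:airelforLfooo} with the placement of the middle scalar $\varepsilon(Z_2)$ across the $\Bbbk_\Lambda$-bimodule $\hom_\mathcal{C}(G,L)$, and verify that the total sign agrees with the one required of a strict $\AI$-map into $\Bbbk_\Lambda$ after the $\AI$-to-dga normalization of Remark \ref{rmk:signdgai}. The remaining vanishing statements rest only on the degree count and are robust, so they should go through with no difficulty.
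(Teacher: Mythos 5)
Your proposal is correct and follows essentially the same route as the paper: both arguments feed the $\AI$-relations strings ending in $P$, use the degree vanishing of the higher $\hom_{\mathcal{C}}(G,G)$-module operations on $\hom_{\mathcal{C}}(G,L)\cong\Bbbk_\Lambda$ to kill all but the $m_2$-terms, and read off $\varepsilon\circ m_1=0$, multiplicativity, and $\varepsilon\circ m_{k\ge 3}=0$. If anything, your uniform degree count (killing $m_3(m_1(Z),Z',P)$ by the grading of the output rather than by assuming $m_1(Z)=0$) is slightly more robust than the justification given in the paper, and the sign issue you flag at $k=2$ is harmless since $\varepsilon$ is supported in degree $0$.
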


\begin{proof}
We show that $\{\varepsilon_k\}_{k \geq 1}: \hom_\mathcal{C} (G,G) \to \Bbbk_\Lambda$ defined by $\varepsilon_1 := \varepsilon$, $\varepsilon_{k \geq 2} \equiv 0$ gives an $\AI$-homomorphism.
%, and the proof for $\varepsilon$ is similar. 
From the $\hom_\mathcal{C} (G,G)$-module structure on $\hom_\mathcal{C} (G,L)$, it is easy to see that $\varepsilon (Z) =0$ unless $\deg Z=0$. Also, we have $\varepsilon (m_1(Z)) = 0$ since 
$$ \varepsilon (m_1(Z)) P= m_2 (m_1(Z), P), $$
and the right hand side equals $ m_1 (m_2(Z,P)) - m_2 ( Z, m_1(P))$,
each of which vanishes since $m_1(P)=0$.
It now suffices to check the following identity:
$$ \varepsilon (m_2(Z,Z') ) = \varepsilon (Z) \cdot \varepsilon (Z').$$
To see this, observe that
\begin{eqnarray*}
m_2( m_2(Z,Z'), P) &=&  m_2(Z,m_2(Z',P))  - m_3 (m_1(Z),Z',P) + m_3 (Z,m_1(Z'),P) \\
&=& m_2 (Z,m_2 (Z',P)) \\
 &=& \varepsilon (Z') m_2 (Z, P) = \left( \varepsilon (Z) \varepsilon (Z') \right) P.
\end{eqnarray*}
Here, $m_3 (m_1(Z),Z',P) = m_3 (Z,m_1(Z'),P)=0$ since $m_1(Z) = m_1 (Z')=0$.
\end{proof}
One can similarly define an augmentation on $\hom_\mathcal{C} (L,L)$, but it is easy to see from degree reason that the resulting augmentation is the projection to the unit component $\hom^0_{\mathcal{C}} (L,L) \cong \Bbbk_\Lambda$.

Although $\hom_\mathcal{C} (G,L)$ is trivial regarded as either a left $\hom_\mathcal{C} (G,G)$- or a right $\hom_{\mathcal{C}} (L,L)$-module, its $(\hom_\mathcal{C} (G,G), \hom_\mathcal{C} (L,L))$-bimodule structure is still quite rich.
%
%The above $\AI$-algebra homomorphism is indeed very explicit. We spell out the shape of the isomorphism using more concrete terms in the following. Observe that
Explicitly, it is induced by
the $A_\infty$-operations on $\mathcal{C}$, 
$$m_{l+k+1}: \hom_\mathcal{C} (G,G)^{\otimes l} \otimes \hom_\mathcal{C}(G,L) \otimes \hom_\mathcal{C} (L,L)^{\otimes k} \to \hom_\mathcal{C}(G,L)
$$
For notational simplicity, denote $\AI$-algebras $\hom_\mathcal{C} (L,L)$ and $\hom_\mathcal{C} (G,G)$ by $V$ and $W$, respectively. As before, we write $\bar{V}:= \hom^{>0}_{\mathcal{C}} (L,L)$ (the augmentation kernel) and $\bar{W}:=\ker \varepsilon$.
Since $\hom_\mathcal{C} (G,L) \cong \Bbbk_\Lambda$, the above map can be 
thought of as a pairing between $T (sW)$ and $T(sV)$. Here, the degree shift makes the pairing $\Z$-graded (i.e., it has degree zero). 
We can alternatively view this pairing in the following perspective. 
%\ref{subsubsec:defkmap} and \ref{subsubsec:defymap}.

\subsubsection*{Koszul map}\label{subsubsec:defkmap}
One can first take direct sum over $k$ after removing the unit component of $V$  to get
\begin{equation}\label{eqn:varphiFFSS}
\tilde{\kappa}_l :   W^{\otimes l}  \otimes BV \to \Bbbk_\Lambda,
\end{equation}
where $BV=  T  (s\bar{V})$ is the bar-construction of $V$. 
%Recall that $BV$ has the component $\Bbbk_\Lambda$, and hence $\tilde{\kappa}_l$ should contain the component
%$$W^{\otimes l} \otimes \Bbbk_\Lambda = W^{\otimes l} \to \Bbbk_\Lambda.$$
%Keeping track of the corresponding part in \eqref{eqn:mkl1pair}, it coincides with the augmentation $\varepsilon$. 
By dualizing \eqref{eqn:varphiFFSS}, we obtain
\begin{equation}\label{eqn:defkzmap}
\kappa_l : W^{\otimes l} \to (BV)^\vee.
\end{equation}
\eqref{eqn:defkzmap} is well-defined due to the boundedness condition on $m_k$-operations which implies that \eqref{eqn:varphiFFSS} gives a bounded linear map for each fixed element of $W^{\otimes l}$. We will provide an explicit formula for $\kappa$ below.

Recall that $ (BV)^\vee = \hom_V (\Bbbk_\Lambda,\Bbbk_\Lambda)$ is the Koszul dual dga of $V$. The map $\kappa=\{\kappa_l\}_{l \geq 1}$ will be referred to as the \emph{Koszul map}.
The Koszul map admits the following explicit description. Let $\{X_1, \cdots, X_N\}$ freely generate $V$, and suppose $X_i \in \pi_{\beta_i} \cdot V \cdot \pi_{\alpha_i}$ . Denote their dual variables by $x_1 ,\cdots, x_N$ which can be  naturally identified as elements of $BV^\vee$, and take a formal linear combination $\tilde{b}= \sum_{i=1}^N x_i X_i$ (which should not be confused with $b$ in \eqref{eqn:m1bm2bb} consisting of degree 1 elements only). Then
\begin{equation}\label{eqn:mkappab}
 \sum_k m_{l+k+1} (Z_1,\cdots, Z_l, P, \tilde{b},\cdots,\tilde{b}) =  P \, \kappa_l (Z_1,\cdots,Z_l),
\end{equation}
where we use the following convention when pushing the formal variables to the back to obtain $\kappa_l (Z_1,\cdots,Z_l)$, i.e.,
\begin{equation}\label{eqn:expruleback}
 m_k(Z_1, \cdots, Z_l, P, x_{i_1} X_{i_1} ,\cdots, x_{i_{m}} X_{i_{m}}) = m_k (Z_1, \cdots, Z_l, P , X_{i_1},\cdots X_{i_{m}}) x_{i_{m}} \cdots x_{i_1} 
\end{equation}
for $k=l+m+1$. This makes $\kappa$ into a $\Bbbk_\Lambda$-module homomorphism. The expression $P \, \kappa_l$ looks opposite to our earlier convention \eqref{eqn:earconv} in which we put $x_i$'s before Floer generators, but it does not create any conflict with the Maurer-Cartan dga as we will see.
 
 More concretely, the left hand side of \eqref{eqn:mkappab} computes
\begin{equation}\label{eqn:coordexpkappa}
\begin{array}{lcl}
\displaystyle\sum_k \sum_{i_1,\cdots,i_k}  m(\vec{Z}, P,x_{i_1} X_{i_1}, \cdots,  x_{i_k}X_{i_k} ) &=& \displaystyle\sum_k \sum_{i_1,\cdots,i_k}   m(\vec{Z}, P,X_{i_1}, \cdots,   X_{i_k} ) x_{i_k} \cdots x_{i_1},  \\
&=& P \, \left(\displaystyle\sum_k \sum_{i_1,\cdots,i_k} c_{i_1,\cdots, i_k} x_{i_1} \cdots x_{i_k} \right)
\end{array}
\end{equation}
where $\vec{Z} = Z_1 \otimes \cdots \otimes Z_l$ and we put $m(\vec{Z}, P,X_{i_1}, \cdots,   X_{i_k} ) = P \, c_{i_1,\cdots, i_k} $ for $c_{i_1,\cdots, i_k} \in \Bbbk_\Lambda$. Therefore
$\kappa_l (Z_1,\cdots, Z_l)=  \sum_k \sum_{i_1,\cdots,i_k} c_{i_1,\cdots, i_k} x_{i_1} \cdots x_{i_k}.$

We next show that the Koszul map $\kappa:=\{\kappa_l\}_{l \geq 1}$ is an $\AI$-algebra homomorphism.
Let us begin by checking the degree of $\kappa$. If $x_{i_1} \cdots x_{i_m}$ nontrivially appears in ${\kappa}_l (Z_1, \cdots, Z_l) $, then the corresponding $m_k$-operations must be nonzero. That is, we have to look at the case when the operation
$$m_{l+m+1} (Z_1,\cdots, Z_l, P, X_{i_1},\cdots, X_{i_m} )$$ 
can have a nonzero output. Since the only possible output is $P$ itself, we see that
$$ |Z_1| + \cdots + |Z_l| + |X_{i_1}| + \cdots + |X_{i_m}| + 2 - (l + m + 1) =0$$
(since $|P|=0$).
Therefore
\begin{eqnarray*}
|x_{i_1}| + \cdots + |x_{i_m}| &=& (1-|X_{i_1}) + \cdots +(1- |X_{i_m}|) \\
&=& n + |Z_1| + \cdots + |Z_l|  + 2 - (l + m + 1) \\
&=& |Z_1| + \cdots + |Z_l|   + (1 - l),
\end{eqnarray*}
and hence the degree of $\kappa_l$ \eqref{eqn:defkzmap} is given by $1-l$.

\begin{prop}\label{prop:algebraickappa1}
$\kappa : W \to (BV)^\vee$ is an $\AI$-algebra homomorphism
\end{prop}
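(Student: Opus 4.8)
The plan is to read the entire system of $A_\infty$-homomorphism equations for $\kappa$ off a single, universal application of the $A_\infty$-relations of $\mathcal{C}$. Since $(BV)^\vee$ is a dga by Proposition \ref{prop:evequalsbv} (only $M_1 = d$ and $M_2$ are nonzero), the morphism equation of arity $l$ --- in the form recorded in Section \ref{subsec:aiinlambda} --- reduces to
\[
d\big(\kappa_l(Z_1,\dots,Z_l)\big) + \sum_{l_1+l_2=l} M_2\big(\kappa_{l_1}(Z_1,\dots,Z_{l_1}),\,\kappa_{l_2}(Z_{l_1+1},\dots,Z_l)\big) = \sum (-1)^{|\mathbf{Z}^{(1)}|'}\,\kappa\big(\dots, m^W(\dots),\dots\big),
\]
where $m^W$ denotes the operations of $W=\hom_\mathcal{C}(G,G)$. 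I would obtain this by applying the $A_\infty$-relation of $\mathcal{C}$ to the single input string $Z_1\otimes\cdots\otimes Z_l\otimes P\otimes e^{\tilde b}$ (with $e^{\tilde b}=\sum_k \tilde b^{\otimes k}$ on the tail), whose total output lies in $\hom_\mathcal{C}(G,L)=\Bbbk_\Lambda\langle P\rangle$, and then extracting the coefficient of $P$ through \eqref{eqn:mkappab} and \eqref{eqn:coordexpkappa}.

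The core step is to classify the terms of that relation by the position of the inner operation relative to the unique $\hom_\mathcal{C}(G,L)$-entry $P$; there are exactly three possibilities. (i) The inner operation consumes a consecutive block of the $Z_i$ only (a pure $m^W$); these terms reassemble into the right-hand side $\sum\kappa(\dots,m^W(\dots),\dots)$. (ii) The inner operation consumes only a block of $\tilde b$'s lying to the right of $P$ (a pure $m^V$, $V=\hom_\mathcal{C}(L,L)$); resumming these against the adjacent copies of $e^{\tilde b}$ and invoking Lemma \ref{lem:donbvvee} (where $m(e^{\tilde b})=\sum_i(-1)^{|x_i|'}d(x_i)X_i$) collapses them to the derivation $d$ applied to $\kappa_l(Z_1,\dots,Z_l)$, matching the differential on $(BV)^\vee\cong\Omega(V^\vee)$ of Definition \ref{def:coalgtbar}. (iii) The inner operation is itself a bimodule operation containing $P$: it eats a trailing block $Z_{l_1+1},\dots,Z_l$, then $P$, then some $\tilde b$'s, and outputs $P\cdot\kappa_{l_2}(Z_{l_1+1},\dots,Z_l)$; feeding this back into the outer bimodule operation and pushing the residual formal variables to the back via \eqref{eqn:expruleback} produces $P\cdot\kappa_{l_1}(\dots)\,\kappa_{l_2}(\dots)$, i.e. the $M_2$-product term. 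Since every term of the $A_\infty$-relation falls into exactly one of (i)--(iii), the coefficient-of-$P$ identity is precisely the displayed morphism equation.

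The step I expect to be the main obstacle is the sign reconciliation. One has to match, simultaneously, the Koszul signs $(-1)^{|X_1|'+\cdots}$ of the $A_\infty$-relations in $\mathcal{C}$, the $A_\infty$-versus-dg twist of Remark \ref{rmk:signdgai} together with the dualization signs \eqref{eqn:aiatoaica} and Remark \ref{rmk:signdiffdg} that define $d$ and $M_2$ on $(BV)^\vee$, and the extra sign incurred by transporting each formal variable past $P$ and to the back as in \eqref{eqn:expruleback}. I would handle this by fixing the shifted degrees of the $Z_i$ and of the coordinate functions $x_j$ (recalling that $\deg\kappa_l=1-l$, computed just above), and then verifying that cases (ii) and (iii) reproduce exactly the signs of $M_1$ and $M_2$ as computed in the proof of Proposition \ref{prop:evequalsbv}. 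Finally, convergence of the infinite $e^{\tilde b}$-expansions used throughout is guaranteed by the boundedness of the $m_k$ and the filtered $T$-adic topology --- the same estimate that makes \eqref{eqn:defkzmap} well defined --- while the semisimple idempotent labels $\pi_i$ are respected automatically by the $\Bbbk_\Lambda$-linearity of $\kappa$.
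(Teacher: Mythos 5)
Your proposal is correct and follows essentially the same route as the paper: apply the $A_\infty$-relation of $\mathcal{C}$ to the string $(Z_1,\dots,Z_l,P,e^{\tilde b})$, split the terms into the three cases according to where the inner operation sits relative to $P$, identify the pure-$V$ terms with $M_1=d$ via Lemma \ref{lem:donbvvee}, the $P$-containing terms with $M_2$, and the pure-$W$ terms with the right-hand side, then match signs. The paper's proof is exactly this computation carried out explicitly, including the sign bookkeeping you flag as the main obstacle.
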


\begin{proof}
We write $M_1$ and $M_2$ for the operations of the $A_\infty$-dga $(BV)^\vee$, i.e., 
$$ M_1 (x) = d(x) ,\qquad M_2 ( x,y) = (-1)^{|x|} x \otimes y.$$
We have to verify the identity
$$ M_1 ( \kappa_l (Z_1,\cdots, Z_l) ) + \sum_{l_1+l_2= l}  M_2 \left( \kappa_{l_1} (Z_1, \cdots, Z_{l_1}), \kappa_{l_2} (Z_{l_1+1}, \cdots, Z_l) \right)$$
$$=  \sum_{k_1 + k_2 =l+1} (-1)^{|Z_1|' + \cdots + |Z_j|'} \kappa_{k_1} ( Z_1, \cdots, Z_j, m_{k_2} (Z_{j+1}, \cdots, Z_{j + k_2}),Z _{j+k_2+1} \cdots, Z_l).$$
%where $d$ and $\cdot$ are dga structure on the generalized Maurer-Cartan space. 
Let us consider the $\AI$-relations for the tuple $(Z_1,\cdots, Z_l, P, e^{\tilde{b}})$ where 
$$e^{\tilde{b}} = 1 + \tilde{b} + \tilde{b} \otimes \tilde{b} + \tilde{b} \otimes \tilde{b} \otimes \tilde{b} + \cdots,$$
and  $\tilde{b}^{\otimes i}$ is inserted to $\AI$-operations as $m_k (-, \cdots, -, \overbrace{\tilde{b}, \cdots, \tilde{b}}^{i})$. The relation is equivalent to 
\begin{equation*}
\begin{array}{c}
 \displaystyle\sum (-1)^{|\vec{Z}|'} m(\vec{Z}, P , e^{\tilde{b}} , m(e^{\tilde{b}}), e^{\tilde{b}})+ \sum (-1)^{|\vec{Z}^{(1)}|'-1} m (\vec{Z}^{(1)}, m(\vec{Z}^{(2)}, P ,e^{\tilde{b}}),e^{\tilde{b}}) \\
= \displaystyle\sum (-1)^{|\vec{Z}^{(1)}|'} m ( \vec{Z}^{(1)}, m(\vec{Z}^{(2)} ), \vec{Z}^{(3)}, P , e^{\tilde{b}}),
\end{array}
\end{equation*}
where we omit the subscript $k$ in $m_k$ for simplicity.
Here, $|\vec{Z}|'$ is the sum of the shifted degrees of factors in $\vec{Z}$.
The right hand side equals
$$ \sum (-1)^{|\vec{Z}^{(1)}|'} m ( \vec{Z}^{(1)}, m(\vec{Z}^{(2)} ), \vec{Z}^{(3)}, P , e^{\tilde{b}})=  P  \sum (-1)^{|\vec{Z}^{(1)}|'} \kappa ( \vec{Z}^{(1)}, m(\vec{Z}^{(2)} ), \vec{Z}^{(3)} ),$$
and the second term on the left hand side equals
\begin{eqnarray*}
\sum (-1)^{|\vec{Z}^{(1)}|'-1} m (\vec{Z}^{(1)}, m(\vec{Z}^{(2)}, P ,e^{\tilde{b}}),e^{\tilde{b}}) &=& \sum (-1)^{|\vec{Z}^{(1)}|'-1} m (\vec{Z}^{(1)}, P \,\kappa(\vec{Z}^{(2)}) ,e^{\tilde{b}})\\
&=& \sum (-1)^{|\vec{Z}^{(1)}|'-1} m (\vec{Z}^{(1)}, P,e^{\tilde{b}}) \, \kappa(\vec{Z}^{(2)})  \\
&=&\sum  (-1)^{| \kappa(\vec{Z}^{(1)})|} P\, \kappa(\vec{Z}^{(1)}) \cdot \kappa (\vec{Z}^{(2)}) \\
&=& P  \sum   M_2( \kappa(\vec{Z}^{(1)}), \kappa(\vec{Z}^{(2)}) )
\end{eqnarray*} 
since $\kappa_l$ has degree $1-l$.
Lastly, applying $m(e^{\tilde{b}}) = \sum_{i=1}^{N} (-1)^{|x_i|'} d(x_i) X_i$ (see Lemma \ref{lem:donbvvee}), the first term on the left hand side becomes
\begin{equation*}
\begin{array}{l}
(-1)^{|\vec{Z}|'}  m(\vec{Z}, P , e^{\tilde{b}} , m(e^{\tilde{b}}), e^{\tilde{b}}) \\
 = \displaystyle\sum_k \sum_{i_1,\cdots,i_k} \sum_{1\leq a \leq k } (-1)^{|\vec{Z}|' + |x_{i_a}|'}  m(\vec{Z}, P,x_{i_1} X_{i_1}, \cdots, x_{i_{a-1}} X_{i_{a-1}}, d(x_{i_a})X_{i_a}, x_{i_{a+1}} X_{i_{a+1}},\cdots, x_{i_k} X_{i_k} ) \\
=  \displaystyle\sum_k \sum_{i_1,\cdots,i_k} \sum_{1\leq l \leq k } (-1)^{|\vec{Z}|'  +|x_{i_a}|'+\ast } m(\vec{Z}, P,  X_{i_1}, \cdots,   X_{i_a}, \cdots,  X_{i_k} ) x_{i_k} \cdots x_{i_{a+1}} d (x_{i_a}) x_{i_{a-1}} \cdots x_{i_1},
\end{array}
\end{equation*}
where $\ast = |x_{i_1}| + \cdots +|x_{i_{a-1}}|$ comes from the Koszul convention since $|d(x_{i_a}) X_{i_a}|' = -1$. Observe that
\begin{eqnarray*}
 |\vec{Z}|' +|x_{i_a}|'+ \ast &=& (  - |X_{i_1}|' - \cdots - |X_{i_k}|' -1 ) +|x_{i_a}|'+ \ast \\
 &=&( |x_{i_1}| + \cdots |x_{i_k}| -1) + |x_{i_a}|' + (|x_{i_1}| + \cdots +|x_{i_{a-1}}|) \\
&\equiv&  |x_{i_{a+1}}| + \cdots + |x_{i_k}| \mod 2
\end{eqnarray*}
Comparing with \eqref{eqn:coordexpkappa}, we see that
$$m(\vec{Z}, P , e^{\tilde{b}} , m(e^{\tilde{b}}), e^{\tilde{b}}) = P \, d ( \kappa (\vec{Z}) ) =P \, M_1 ( \kappa (\vec{Z}) ), $$
which completes the proof.

\end{proof}

We will mainly focus on the induced map on the $0$-th cohomology of 
$\kappa : \hom_\mathcal{C} (G,G) \to \mathcal{A}_L=(B \hom_\mathcal{C} (L,L))^\vee$ in our geometric situation below.

\section{Dual pairs in wrapped Fukaya categories}\label{sec:koszulpairmain}
We apply the homological algebra tool developed so far to the Fukaya $\AI$-algebra, and study the Koszul duality between $\AI$-algebras from two Lagrangians in some special geometric relation. Let $(M,\omega=d \Theta)$ be a Liouville manifold, and consider an exact compact Lagrangian $L=\oplus_{i=1}^r L_i$ which, together with suitable Floer data, gives an object in $\mathcal{W}Fuk(M)$. Since $L$ consists of $r$ irreducible components, we are naturally to work over the semi-simple ring $\Bbbk_\C$ or $\Bbbk_\Lambda$. 
%We assume that $\hom^{<0}_\mathcal{W} (L,L) =0$ and $\hom^0_{\mathcal{W}} (L,L) \cong \Bbbk_\C$. 
Analogously to the algebraic setting in \ref{subsec:kzdualalg}, suppose that there exists another Lagrangian $G=\oplus_{i=1}^r G_i$ in $\mathcal{W}Fuk(M)$ so that the pair $(G,L)$ admits the following properties.

\begin{assumption}\label{assume:kospairinw}
$G$ and $L$ satisfy
\begin{itemize}
\item $\hom_{\mathcal{W}Fuk(M)} (G,L) \cong \Bbbk_{\mathbb{C}}$ as $\hom_{\mathcal{W}Fuk(M)} (G,G)$-modules;
\item $\hom^{<0}_{\mathcal{W}Fuk(M)} (L,L) =0$ and $\hom^0_{\mathcal{W}Fuk(M)} (L,L) \cong \Bbbk_\C$;
\item $\hom^{i >0}_{\mathcal{W}Fuk(M)} (G,G)=0$, and $G$ generates $L$ in $\mathcal{W}Fuk(M)$.
\end{itemize} 
\end{assumption}
We work with this assumption throughout the section.
The first condition implies that (after rearranging indinces suitably) $L_i$ intersects $G_i$ exactly at one point, say $P_i$, with degree 0, and $L_i \cap G_j =\emptyset$ for $j \neq i$. Then $\hom_{\mathcal{W}Fuk(M)} (G,L) $ is spanned by $P=\sum P_i$.
We also remark that the second condition only constraints  the degree of immersed generators of $L$. 
Note, in particular, that $G$ and $L$ form a Koszul dual pair in $\mathcal{W}Fuk(M)$ in the sense of \ref{subsec:kzdualalg}. Our goal is to compare the wrapped Floer cohomology of $G$ and the Maurer-Cartan algebra of $L$ using the Koszul map $\kappa$ algebraically defined in \ref{subsec:kzdualalg}.  For notational simplicity, we write $\mathcal{W}^\C$ and $\mathcal{W}^\Lambda$ for respectively $\mathcal{W}Fuk (M)$  and  $\mathcal{W}Fuk^\Lambda (M)$ from now on.

Our setting generalizes the geometric one in \cite[Introduction]{EL-koszul} in the sense that an individual component $L_i$ can be non-simply-connected. This makes each homogenous piece of its bar construction infinite dimensional, and it is natural to work over $\Lambda$ for this reason. Then it is crucial to introduce a certain completion of $\hom_{\mathcal{W}^\Lambda} (G,G)=CW (G,G;\Lambda)$ (related with the $T$-adic topology) to compare with the Maurer-Cartan algebra of $L$.
We will also examine in \ref{subsec:locsyztfibers} the case of non-exact tori $L$ sitting as fibers of a Lagrangian torus fibation, which will be useful in applications to local SYZ examples later.

\subsection{The double Koszul dual and the completed cobar construction $\hat{\Omega} B$.}  
Let us first compare Floer theory of $G$ and $L$ satisfying Assumption \ref{assume:kospairinw} over $\C$-coefficient. 
%We will lift our argument to the filtered setting (over $\Lambda$-coefficient) in \ref{subsec:upgtolambdakd}.
Following \cite[Introduction]{EL-koszul}, we begin with the Yoneda map for $G$
\begin{equation}\label{eqn:yonedccel}
\hom_{\mathcal{W}^\C} (L,L) \to \hom_{\hom_{\mathcal{W}^\C} (G,G)} (\hom_{\mathcal{W}^\C} (G,L), \hom_{\mathcal{W}^\C} (G,L)) = \hom_{\hom_{\mathcal{W}^\C} (G,G)} ( \Bbbk_\C, \Bbbk_\C),
\end{equation}
which is a quasi-isomorphism since $G$ generates $L$. We remark that $\Bbbk_\C$ above is taken to be a left module over $\hom_{\mathcal{W}^\C} (G,G)$. 
\cite[Proposition 14]{EL-duality} (or Proposition \ref{prop:evequalsbv} adapted to $\C$-coefficient setting) shows that 
$$\hom_{\hom_{\mathcal{W}^\C} (G,G)} ( \Bbbk_\C, \Bbbk_\C)  \cong \left. ^\sharp \! \! \right.   \left(B \hom_{\mathcal{W}^\C}  (G,G) \right)$$
where $\left. ^\sharp \! \! \right.(-)$ is the graded dual (over $\Bbbk_\C$), that is, the direct sum of the sets of left $\Bbbk_\C$-module maps from individual graded pieces to $\Bbbk_\C$. The bar construction $B$ in this case is the one for $\Bbbk_\C$-module, taken with respect to the (strict) augmentation $\varepsilon_{\C} : \hom_{\mathcal{W}^\C} (G,G) \to \Bbbk_\C$ given by $m_2(\widetilde{Z},\widetilde{P})=\varepsilon_{\C} (\widetilde{Z}) \widetilde{P}$, which is related with $\varepsilon$ given in Lemma \ref{lem:augstrict} via base change. Notice that we are using the notation for exact generators here (see  Definition \ref{def:exactgens}). On $\mathcal{W}^\Lambda$, the associated  geometric generator $Z$ and $P$ differ from $\widetilde{Z}$ and $\widetilde{P}$ by $T$-th power determined by their actions. Note also that the augmentation $\varepsilon_{\C}$ depends significantly on $L$.
%See \cite[Section 2]{EL-duality} for more details.

For simplicity, let us write $V_\C$ and $W_\C$ for $\hom_{\mathcal{W}^\C} (L,L)$ and $\hom_{\mathcal{W}^\C} (G,G)$ respectively. Likewise, we will use the notations $V_\Lambda:=\hom_{\mathcal{W}^{\Lambda}} (L,L)$ and $W_\Lambda := \hom_{\mathcal{W}^{\Lambda}} (G,G)$.
Taking the bar construction on the $\AI$ quasi-isomorphism $V_\C \to   \left. ^\sharp \! \! \right. (B W_\C)$ in \eqref{eqn:yonedccel}, we have a dg-coalgebra quasi-isomorphim $ BV_\C \stackrel{\simeq}{\longrightarrow} B \left( \left. ^\sharp \! \! \right. (BW_\C) \right)$, or dually, 
\begin{equation}\label{eqn:bbcoverc}
(B   \left. ^\sharp \! \! \right.  (BW_\C) )^\sharp \stackrel{\simeq}{\longrightarrow} (BV_\C)^\sharp
\end{equation}
since $\mathrm{Ext}$ group vanishes over a divisible group.
The $0$-th cohomology $H^0 (BV_\C^\sharp) $ of the right hand side of \eqref{eqn:bbcoverc} can be thought of as the ``formal" Maurer-Cartan algebra of $L$ in contrast to the actual Maurer-Cartan algebra $A_L =
H^0 (BV_\Lambda^\vee)$ which involves the continuous dual.
On the other hand, $(B   \left. ^\sharp \! \! \right.  (BW_\C) )^\sharp$ is the double Koszul dual, where the Koszul dual in this context uses the graded dual ``$\sharp$" instead of the continuous dual ``$\vee$". 
 
We lift the above quasi-isomorphism to $\Lambda$-coefficients, and unravel the double Koszul dual a little further. For a graded left $\Bbbk_\Lambda$ module $M$, we denote by $M^{\sharp_\Lambda}$ the graded dual of $M$ in $\Lambda$-coefficient, that is, its degree $d$ piece is the set $\hom_{\Bbbk_{\Lambda}} (M_{-d} , \Bbbk_{\Lambda})$ of right $\Bbbk_{\Lambda}$ module homomorphisms. $  \left. ^{\sharp_\Lambda} \! \! \right.  M$ is similarly defined.
Applying $(-) \otimes_\C \Lambda$ (or $(-) \otimes_{\Bbbk_\C} \Bbbk_\Lambda)$)  to $ BV_\C \stackrel{\simeq}{\longrightarrow} B \left( \left. ^\sharp \! \! \right. (BW_\C) \right)$,
$$ BV_{\Lambda}  \stackrel{\cong}{\longrightarrow}  B \left( \left. ^\sharp \! \! \right. (BW_\C) \right) \otimes \Lambda,$$
which leads to
$$ \bigoplus_d \hom_{\Bbbk_{\Lambda}} \left( \left( B \left( \left. ^\sharp \! \! \right. (BW_\C) \right) \right)_d \otimes \Bbbk_\Lambda, \Bbbk_\Lambda \right)   \stackrel{\cong}{\longrightarrow}  (BV_{\Lambda})^{\sharp_\Lambda}$$
by universal coefficient theorem, and hence
\begin{equation}\label{eqn:formallambda}
 \bigoplus_d \hom_{\Bbbk_{\C}} \left( \left( B \left( \left. ^\sharp \! \! \right. (BW_\C) \right) \right)_d, \Bbbk_\Lambda \right)   \stackrel{\cong}{\longrightarrow}  (BV_{\Lambda})^{\sharp_\Lambda}.
\end{equation}
The left hand side is the double Koszul dual over $\Lambda$, but still formal in the sense that it ignores the $T$-adic convergence.
We first show that it is quasi-isomorphic to the completed cobar construction $\hat{\Omega} B W_{\Lambda}$ of $BW_\Lambda$ mentioned in Remark \ref{rmk:threecobar}.

\begin{prop}\label{prop:unfiltereddercompl}
$(BV_{\Lambda})^{\sharp_\Lambda}$ is quasi-isomorphic to $\hat{\Omega} B W_{\Lambda}$ (where $V_\Lambda=\hom_{\mathcal{W}^{\Lambda}} (L,L)$ and $W_\Lambda = \hom_{\mathcal{W}^{\Lambda}} (G,G)$).
\end{prop}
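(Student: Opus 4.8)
The plan is to take the isomorphism \eqref{eqn:formallambda} as the starting point, since it already identifies $(BV_\Lambda)^{\sharp_\Lambda}$ with the formal double Koszul dual $\left(B\,{}^\sharp(BW_\C)\right)^{\sharp_\Lambda}$ over $\Lambda$. It therefore remains to produce a dga isomorphism
$$ \left( B\,{}^\sharp (BW_\C) \right)^{\sharp_\Lambda} \;\cong\; \hat{\Omega}\, BW_\Lambda, $$
after which the whole statement follows, because the map underlying \eqref{eqn:bbcoverc}--\eqref{eqn:formallambda} is a quasi-isomorphism inherited from the Yoneda quasi-isomorphism \eqref{eqn:yonedccel}. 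Writing $C:=BW_\C$ for the dg-coalgebra and $A:={}^\sharp C$ for its Koszul dual dga, the target identity asserts that applying ``bar, then graded dual over $\Lambda$'' to $A$ reconstructs the completed cobar of $C$. This is exactly the coalgebra-side, infinite-dimensional, $\Lambda$-coefficient analogue of Proposition \ref{prop:omevbvd}, which supplies $\Omega(U^\vee)\cong (BU)^\vee$ for finitely generated $U$.

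First I would match the underlying $\Bbbk_\Lambda$-modules. Rerunning the computation of Lemma \ref{lem:bartvhat1} with the graded dual $(-)^{\sharp_\Lambda}$ in place of the continuous dual $(-)^\vee$, the dual of the tensor coalgebra $BA=T(s\bar A)$ turns the direct sum over tensor length into a product over tensor length; since we use the graded dual, with no boundedness constraint, this product is the pure length-completion $\widetilde{T}$ of Definition \ref{def:tensorcocomple}, i.e.\ the one underlying $\hat{\Omega}$ in Remark \ref{rmk:threecobar}, rather than the valuation-bounded $\overline{T}$ underlying $\Omega$. This distinction --- graded dual $\leftrightarrow \widetilde{T}/\hat{\Omega}$ versus continuous dual $\leftrightarrow \overline{T}/\Omega$ --- is precisely what makes the present (unfiltered, formal) statement land in $\hat{\Omega}$, and is the reason it is the formal counterpart of the filtered results to come.

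Next I would match the differentials and the product. Under the correspondence, the deconcatenation coproduct of the bar construction $BA$ dualizes to the concatenation product of the cobar algebra, while the bar differential --- built from the internal differential and product of $A={}^\sharp(BW_\C)$, which are themselves the duals of the coproduct and differential of $C=BW_\C$ --- dualizes back to the cobar differential \eqref{eqn:cobardiff} on $\hat{\Omega}\, BW_\Lambda$. The signs are handled exactly as in Remark \ref{rmk:signdgai} and Remark \ref{rmk:signdiffdg}, so that the $A_\infty$-to-dg conventions are respected; this is the routine bookkeeping already implicit in the proofs of Proposition \ref{prop:omevbvd} and Proposition \ref{prop:evequalsbv}.

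The main obstacle is the infinite-dimensionality of $W_\Lambda=CW(G,G;\Lambda)$: because $G$ is noncompact, each internal degree of $BW_\C$ is infinite-dimensional (for instance the generators $\widetilde{Z_i}$ of Example \ref{ex:cotcirclenegt} all lie in degree $0$), so the double dualization inherent in the composite ${}^{\sharp_\Lambda}\!\circ B\circ{}^\sharp$ is not literally reflexive and one cannot simply assert $({}^\sharp BW_\C)^{\sharp_\Lambda}=BW_\Lambda$. The point to verify carefully is that the deconcatenation coproduct on $BW_\C$ is locally finite in tensor length, so that the cobar differential \eqref{eqn:cobardiff} pairs against only finitely many length components at a time; consequently the natural evaluation map $\hat{\Omega}\, BW_\Lambda \to \left(B\,{}^\sharp BW_\C\right)^{\sharp_\Lambda}$ is well-defined and, degree by degree and length-filtration piece by piece, is an isomorphism, the within-degree completions being absorbed into the length-completion $\widetilde{T}$. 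Over the field $\Lambda$ the relevant $\mathrm{Ext}$ and $\mathrm{Tor}$ obstructions vanish, as already used for \eqref{eqn:bbcoverc} and \eqref{eqn:formallambda}, so this comparison is the desired quasi-isomorphism.
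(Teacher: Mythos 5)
Your reduction to comparing $\bigl(B\,{}^\sharp(BW_\C)\bigr)^{\sharp_\Lambda}$ with $\hat{\Omega}\,BW_\Lambda$ is the same starting point as the paper, and your identification of the main obstacle --- that $W_\C$ has infinite-dimensional graded pieces, so the double dual $\bigl({}^\sharp BW_\C\bigr)^{\sharp_\Lambda}$ is not literally $BW_\Lambda$ --- is exactly right. But the resolution you propose does not close that gap. Local finiteness of the deconcatenation coproduct in tensor length only guarantees that the evaluation map $\hat{\Omega}\,BW_\Lambda \to \bigl(B\,{}^\sharp BW_\C\bigr)^{\sharp_\Lambda}$ is well-defined; it does not make that map an isomorphism ``degree by degree and length-filtration piece by piece.'' In any fixed tensor length and fixed internal degree the map is the evaluation $\bar W_\C^{\otimes i_1}\boxtimes\cdots\boxtimes \bar W_\C^{\otimes i_k}\otimes\Lambda \to {}^{\sharp_\Lambda}\!\bigl({}^\sharp(\bar W_\C^{\otimes i_1})\boxtimes\cdots\bigr)$ into a genuine double dual, which is a \emph{strict} inclusion whenever the graded pieces of $W_\C$ are infinite-dimensional (as they are here: all the $\widetilde{Z_i}$ sit in degree $0$). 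The length-completion $\widetilde{T}$ completes in the $\boxtimes$-direction only and cannot absorb this within-degree discrepancy.

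The missing ingredient is a finiteness statement at the level of \emph{cohomology}: because the Yoneda map identifies ${}^\sharp(BW_\C)$ with $V_\C=\hom_{\mathcal{W}^\C}(L,L)$, which is finitely generated, $H^\ast(BW_\C)$ is finite-dimensional. The paper's proof uses this as follows. It truncates both sides by the outer tensor length --- $\mathcal{F}^l\mathbb{W}\subset \mathbb{W}=B\,{}^\sharp(BW_\C)$ on one side and the quotient $I_l(\underline{\Omega}BW_\Lambda)=\underline{\Omega}BW_\Lambda/\mathcal{F}_l$ on the other --- and runs the spectral sequences of these now \emph{bounded} filtrations. On the $E_1$-pages one finds tensor products of copies of $H^\ast(BW_\C)$ and of its dual, and finite-dimensionality of $H^\ast(BW_\C)$ is precisely what lets the dual of a tensor product be identified with the tensor product of duals there, so that the evaluation chain map $f_l$ induces an isomorphism from $E_1$ onward. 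Only after this does one pass to the inverse limit over $l$. So your chain-level claim should be replaced by: $f_l$ is a quasi-isomorphism for each $l$, proved by the spectral sequence comparison together with $\dim_\C H^\ast(BW_\C)<\infty$; without invoking that finiteness (which you never do) the argument does not go through.
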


\begin{proof}
We show that the left hand side of \eqref{eqn:formallambda} is quasi-isomorphic to $\hat{\Omega} B W_{\Lambda}$.
For simplicity, let us write $\mathbb{W}:=B \left. ^\sharp \! \! \right.  (BW_\C) $ in the rest of the proof. 
$\mathbb{W}$ admits a filtration induced by the number of tensor powers from the latter bar construction.
%whose associated tensor is denoted by $\boxtimes$. 
Namely, we have
\begin{equation}\label{eqn:filtnonmathfrakw}
\mathcal{F}^0 \mathbb{W} \subset \mathcal{F}^1 \mathbb{W} \subset \mathcal{F}^2 \mathbb{W} \subset \cdots
\end{equation}
where $\mathcal{F}^l \mathbb{W}:= \oplus^{l}_{i=0}\left( s  \left. ^\sharp \! \! \right.  (BW_\C) \right)^{\boxtimes i}$, and each $\mathcal{F}^l \mathbb{W}$ is preserved by the differential as it decreases the length. Notice that we use $\boxtimes$ for the tensor product appearing in the latter bar construction $B$ in order to avoid confusion.
The filtration satisfies $\cup_{i=1}^{\infty}  \mathcal{F}^i \mathbb{W} = \mathbb{W}$, although it is not bounded. 

$\mathbb{W}$ can also be understood as a double complex described in the diagram below (in the diagram, $W$ denotes $W_\C$ for simplicity), where the filtration $\mathcal{F}^l$ is simply taking its first $l$ rows.
 
\begin{equation}\label{eqn:doublecpxmathfrakw}
\xymatrix{  \left. ^\sharp \! \! \right. \bar{W}_0 \ar[r]&  \left. ^\sharp \! \! \right. \bar{W}_{-1} \oplus  \left. ^\sharp \! \! \right. (\bar{W}_0^{\otimes 2}) \ar[r]  &  *\txt{$ \left. ^\sharp \! \! \right. \bar{W}_{-2} \oplus  \left. ^\sharp \! \! \right. (\bar{W}_{-1} \otimes   \bar{W}_0 )$ \\ $\oplus \left. ^\sharp \! \! \right.  ( \bar{W}_0 \otimes   \bar{W}_{-1} ) \oplus  \left. ^\sharp \! \! \right.( \bar{W}_0^{ \otimes 3})$}  \ar[r]   & \cdots\\
 & \left. ^\sharp \! \! \right. \bar{W}_0 \boxtimes  \left. ^\sharp \! \! \right. \bar{W}_0 \ar[r] \ar[u]& *\txt{$  \left. ^\sharp \! \! \right.  \bar{W}_{-1} \boxtimes   \left. ^\sharp \! \! \right.   \bar{W}_0   \oplus  \left. ^\sharp \! \! \right.\bar{W}_0 \boxtimes \left. ^\sharp \! \! \right.   \bar{W}_{-1}$\\$  \oplus \left. ^\sharp \! \! \right.    \bar{W}_0 \boxtimes \left. ^\sharp \! \! \right.  (  \bar{W}_0^{\otimes 2}   ) \oplus  \left. ^\sharp \! \! \right.  (\bar{W}_0^{\otimes 2} ) \boxtimes \left. ^\sharp \! \! \right.   \bar{W}_0 $} \ar[r] \ar[u] 
  & \cdots \\
 & & \left. ^\sharp \! \! \right. \bar{W}_0 \boxtimes  \left. ^\sharp \! \! \right. \bar{W}_0 \boxtimes  \left. ^\sharp \! \! \right. \bar{W}_0 \ar[r]  \ar[u]& \cdots  \\
}
\end{equation}
The horizontal differential is induced by that on $BW_\C$ (extended to the tensor product $\boxtimes$ by the Leibnitz rule), and the vertical differential is dual to $a \otimes b \mapsto a \boxtimes b$.

Now, consider the following graded dual $\mathbb{W}^{\sharp_{\Lambda}}$ of $\mathbb{W}$ in $\Lambda$-coefficients:
\begin{equation*}
\left( \mathbb{W}^{\sharp_{\Lambda}}  \right)_d= \hom_{\C} ( \mathbb{W}_d ,\Lambda) 
\end{equation*}
By the universal coefficient theorem, $ \mathbb{W}^{\sharp_{\Lambda}}$ is quasi-isomorphic to the graded dual $(BV_\C)^{\sharp_{\Lambda}}$ of $BV_\C$ in $\Lambda$-coefficient. 
%Applying $(- \otimes_{\Lambda} \Lambda)$ to $(BV_\C)^{\sharp_{\Lambda}}$, we recover the graded topological dual of $BV$ by Lemma \ref{lem:topdualtensorform}, which is the Maurer-Cartan dga $\mathcal{A}_L$  of $L$. Thus 
We then find what the cohomology of $\mathbb{W}^{\sharp_{\Lambda}}$ computes.
Observe that $\mathbb{W}^{\sharp_{\Lambda}}$ can be obtained as the inverse limit (completion) of $ \left\{ \left( \mathcal{F}^l \mathbb{W} \right)^{\sharp_{\Lambda}} \right\}_{l \geq 1}$  with respect to the natural restriction map
$$ \iota_{l' l}: \left( \mathcal{F}^{l'}
 \mathbb{W} \right)^{\sharp_{\Lambda}} \to \left( \mathcal{F}^l \mathbb{W} \right)^{\sharp_{\Lambda}}$$ 
 for $l' >l$,
where  $\left( \mathcal{F}^l \mathbb{W} \right)^{\sharp_{\Lambda}}$ is the graded dual of $\mathcal{F}^l \mathbb{W}$ in $\Lambda$-coefficient. It is not difficult to see that the completion is isomorphic $\mathbb{W}^{\sharp_\Lambda}$ on the chain level.

On the other hand, recall $\underline{\Omega} B W_{\Lambda} = \underline{\Omega} B W_{\C} 
\otimes \Lambda $ (see Remark \ref{rmk:threecobar}) also admits a filtration from the tensor powers associated with the cobar construction, given as
$$  \mathcal{F}_l \underline{\Omega} B W_{\Lambda} := \oplus_{i=l+1}^{\infty} \left( s^{-1} BW_{\Lambda} \right)^{\boxtimes i}$$
so that 
$$ \mathcal{F}_0 \underline{\Omega} B W_{\Lambda} \supset \mathcal{F}_1 \underline{\Omega} B W_{\Lambda} \supset \mathcal{F}_2 \underline{\Omega} B W_{\Lambda} \supset   \cdots.$$
Here we use $\boxtimes$ to denote the tensor product associated with $\underline{\Omega}$.
It can also be written as a double complex below, where the filtration $\mathcal{F}_l$ takes rows above the $l$-th:
\begin{equation}\label{eqn:doubleomegab}
\xymatrix{ \cdots \ar[r]&\bar{W}_0 \boxtimes \bar{W}_0 \boxtimes \bar{W}_0& &	  \\
\cdots \ar[r]& *\txt{$ (   \bar{W}_{-1} \boxtimes   \bar{W}_0 ) \oplus (  \bar{W}_0 \boxtimes   \bar{W}_{-1} )$\\$  \oplus (  \bar{W}_0 \boxtimes  \bar{W}_0^{\otimes 2}   ) \oplus (  \bar{W}_0^{\otimes 2} \boxtimes   \bar{W}_0 ) $} \ar[r] \ar[u]&\bar{W}_0 \boxtimes \bar{W}_0&  \\
\cdots \ar[r]&*\txt{$  \bar{W}_{-2} \oplus  ( \bar{W}_{-1} \otimes   \bar{W}_0 )$ \\ $\oplus (  \bar{W}_0 \otimes    \bar{W}_{-1} ) \oplus  \bar{W}_0^{ \otimes 3}$} \ar[r] \ar[u]& \bar{W}_{-1} \oplus \bar{W}_0^{\otimes 2} \ar[r] \ar[u]&  \bar{W}_0 \\}.
\end{equation}
As in Remark \ref{rmk:threecobar}, $\hat{\Omega} B W_{\Lambda}$ is the inverse system formed by $ I_l (\underline{\Omega} B W_{\Lambda}) : =  \dfrac{\underline{\Omega} B W_{\Lambda} }{ \mathcal{F}_l \underline{\Omega} B W_{\Lambda} }$.

Observe that there exists a natural map 
\begin{equation}\label{eqn:flflflfl}
f_l :  I_l (\underline{\Omega} B W_{\Lambda})  \to \left( \mathcal{F}^l \mathbb{W} \right)^{\sharp_{\Lambda}}
\end{equation}
which is compatible with the restriction maps $\iota_{l' l }$. This is induced by the evaluation of a tensor product of maps in 
$ \left. ^\sharp \! \! \right. \left( \otimes_{i=1}^{a_1}  (\bar{W}_\C)_{d_{i,1}} \right) \boxtimes \left. ^\sharp \! \! \right. \left( \otimes_{i=1}^{a_2}  (\bar{W}_\C)_{d_{i,2}} \right) \boxtimes \cdots \boxtimes  \left. ^\sharp \! \! \right. \left( \otimes_{i=1}^{a_l} (\bar{W}_\C)_{d_{i,l}} \right)$
(belonging to $\mathbb{W}_d$ for $d=\sum_{i,j} (1-d_{i,j}) -k$)
at an element in $ \left( \otimes_{i=1}^{a_1}  (\bar{W}_\C)_{d_{i,1}} \right) \boxtimes \left( \otimes_{i=1}^{a_2}  (\bar{W}_\C)_{d_{i,2}} \right) \boxtimes \cdots \boxtimes \left( \otimes_{i=1}^{a_l}   (\bar{W}_\C)_{d_{i,l}} \right) \subset \underline{\Omega} B W_\C$ extended to $\underline{\Omega} B W_{\Lambda}$ by linearity.
This is well-defined since the evaluation of a map in $\mathcal{F}^l \mathbb{W}$ at any element of $\mathcal{F}_{l} \underline{\Omega} B W_{\Lambda}$ vanishes by definition. 

It is straightforward to check that $f_l$ is a chain map.
We claim that $f_l$ \eqref{eqn:flflflfl} is a quasi-isomorphism for each $l$. To see this, recall that $\!\left. ^\sharp \! \! \right.  (BW_\C)$ is quasi-isomorphic to $V_\C$, and hence it has a finite dimensional cohomology. By the universal coefficient theorem, $BW_\C$ should also have a finite dimensional cohomology.
Let us first look into the cohomology of $ \mathcal{F}^l \mathbb{W}$ itself. 

$ \mathcal{F}^l \mathbb{W}$ admits a bounded decreasing filtration which restricts the original one on $\mathbb{W}$ \eqref{eqn:filtnonmathfrakw}
$$  \mathcal{F}^l \mathbb{W} \supset \mathcal{F}^{l-1} \mathbb{W} \supset \cdots \supset \mathcal{F}^0 \mathbb{W}$$
%(after reindexing to be more precise).
The $E_1$-page of the associated spectral sequence is nothing but the horizontal cohomology of the double complex \eqref{eqn:doublecpxmathfrakw}. Therefore, $E_1$-page consists of the tensor products 
$$ \left. ^\sharp \! \! \right. H^{i_1} (BW_\C) \boxtimes  \left. ^\sharp \! \! \right. H^{i_2} (BW_\C) \boxtimes \cdots \boxtimes  \left. ^\sharp \! \! \right. H^{i_k} (BW_\C) = \left. ^\sharp \! \! \right. \left( H^{i_1} (BW_\C) \boxtimes   H^{i_2} (BW_\C) \boxtimes \cdots \boxtimes  H^{i_k} (BW_\C) \right)$$ 
with $k \leq l$ where the equality holds since each $H^{\ast} (BW_\C)$ is finite dimensional. 
%where the degree is given by $k-(i_1 + i_2 + \cdots + i_k) \pm 1$
 Similarly, $ I_l (\underline{\Omega} B W_{\Lambda}) $ can be thought of as the one obtained by erasing rows above $l$-th row in \eqref{eqn:doubleomegab} (or replacing them by $0$). It also admits a bounded filtration whose associated spectral sequence computes the horizontal cohomology at $E_1$-page. Thus $E_1$ consists of 
\begin{equation*}
\begin{array}{ll}
 H^{i_1} (BW_{\Lambda}) \boxtimes   H^{i_2} (BW_\Lambda) \boxtimes \cdots \boxtimes  H^{i_k} (BW_{\Lambda}) \\
 = \hom_\C ( \left. ^\sharp \! \! \right. \left( H^{i_1} (BW_\C) \boxtimes   H^{i_2} (BW_\C) \boxtimes \cdots \boxtimes  H^{i_k} (BW_\C) \right),\Lambda ),
 \end{array}
 \end{equation*}
where the equality holds since $H^\ast (BW_\C)$ is finite dimensional. 
%and hence its double dual gets back to itself. 

We conclude that the filtrations on $\mathcal{F}^l \mathbb{W}$ and $I_l (\underline{\Omega} B W_{\Lambda})$ both admit spectral sequences which become finite dimensional from $E_1$-pages. Moreover, they are dual to each other (in the sense of taking ($\hom_\C(- ,\Lambda)$) in each page. Since both of filtrations are bounded, their associated spectral sequence converges, and hence $H^\ast I_l (\underline{\Omega} B W_{\Lambda})$ is isomorphic to $\hom_\C( H^\ast (\mathcal{F}^l \mathbb{W}), \Lambda)$. The claim follows from this by the universal coefficient theorem.

\end{proof}

%We remark that the conditions $\hom^{<0}_{\mathcal{W}Fuk(M)} (L,L) =0$ and $\hom^{i >0}_{\mathcal{W}Fuk(M)} (G,G)=0$ are not actually needed for Proposition \eqref{prop:unfiltereddercompl}.
 When $W$ is a dga with $H^\ast(BW)$ locally finite, \cite[Proposition 4.1.9, Remark 4.1.11]{Booth} proves that the double Koszul dual of $W$ is the completed cobar construction $\hat{\Omega} B W$ which was understood as the ``derived" completion of $W$.
% which is understood as the derived completion of $W$.
For an augmented algebra, it is shown in \cite{Seg} that the double Koszul dual gives rise to the completion of the algebra with respect to the augmentation kernel.
 
\subsection{Formal neighborhood of $L$}\label{subsec:formalnbdhat}
We are interested in what \eqref{eqn:formallambda} induces on the $0$-th cohomology.
$H^0 (BV_{\Lambda}^{\sharp_{\Lambda}})$ can be understood as a formal noncommutative power series analogue of $A_L$ in Definition \ref{def:mcalgell}. Using the notation in \ref{subsec:MCeqns}, $H^0 (BV_{\Lambda}^{\sharp_{\Lambda}})$ consists of series 
$\sum_{l=0}^\infty \sum_{I \in (\Z_{>0})^l} \lambda_I  x^I$ modulo elements of the form
$\sum_{k=1}^{N'} \sum_{l=0}^{\infty} \sum_{|I | + |J| = l} \lambda_{k, I, J} \,\, x^{I} \, f_k \, x^{J}$ 
where there are no restrictions for the sets of coefficients $\{ \Lambda_I \}$ and $\{ \lambda_{k, I , J}   \}$. We will write $\hat{A}_L:=H^0 (BV_{\Lambda}^{\sharp_{\Lambda}})$. It is not difficult to see the natural map $A_L (=H^0 (BV_{\Lambda}^\vee)) \to \hat{A}_L$ induced from $BV_{\Lambda}^\vee \to BV_{\Lambda}^\sharp$ is an inclusion, and
$A_L$ can be intuitively interpreted as the subspace of $\hat{A}_L$ formed by \emph{convergent} power series.

%, which we denote by $\mathfrak{m}_L$.

\begin{prop}\label{cor:formalversionk}
In the setting as above, $\hat{A}_L$ is isomorphic to the completion $\widehat{H^0 (W_\Lambda)}$ of $H^0 (W_\Lambda) $ with respect to the maximal ideal $H^0 (\bar{W}_\Lambda)$ for $W_\Lambda = \hom_{\mathcal{W}^{\Lambda}} (G,G)$ and $\bar{W}_\Lambda = \ker  \epsilon_L$.
%$\Bbbk_\Lambda \oplus \varprojlim_{l} H^0 (\bar{W}_\Lambda) / \underbrace{H^0 (\bar{W}_\Lambda) \cdots H^0 (\bar{W}_\Lambda)}_{l}$.
\end{prop}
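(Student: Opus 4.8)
The plan is to substitute Proposition \ref{prop:unfiltereddercompl} into the definition of $\hat A_L$ and then compute the degree-$0$ cohomology of the resulting completed cobar construction directly. By definition $\hat A_L = H^0((BV_\Lambda)^{\sharp_\Lambda})$, and Proposition \ref{prop:unfiltereddercompl} provides a quasi-isomorphism $(BV_\Lambda)^{\sharp_\Lambda}\simeq \hat\Omega BW_\Lambda$, so $\hat A_L\cong H^0(\hat\Omega BW_\Lambda)$. Since $\hom^{>0}_{\mathcal{W}Fuk(M)}(G,G)=0$ the complex $W_\Lambda$ lives in nonpositive degrees, and in the situation at hand $HW(G,G;\Lambda)$ is concentrated in degree $0$; its minimal model is therefore the ordinary augmented associative $\Bbbk_\Lambda$-algebra $A:=H^0(W_\Lambda)$ with augmentation $\epsilon_L$ and augmentation ideal $\mathfrak{m}:=H^0(\bar W_\Lambda)$, all higher products vanishing for degree reasons. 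As in the proof of Proposition \ref{prop:unfiltereddercompl}, the bounded length spectral sequence of $\hat\Omega BW_\Lambda$ is built from $H^\ast(BW_\C)$ and hence depends only on the quasi-isomorphism type of $W_\Lambda$; so I may replace $W_\Lambda$ by $A$ and reduce the proposition to the purely algebraic identification $H^0(\hat\Omega BA)\cong \widehat{A}=\varprojlim_n A/\mathfrak{m}^n$.

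First I would make the low-degree part of $\hat\Omega BA$ explicit. With the convention $|sx|=|x|-1$, a cobar word $s^{-1}w_1\otimes\cdots\otimes s^{-1}w_p$ with $w_i\in(s\mathfrak{m})^{\otimes k_i}$ lies in degree $p-\sum_i k_i$; hence the degree-$0$ part forces all $k_i=1$ and gives $(\hat\Omega BA)^0\cong\widetilde{T}\mathfrak{m}$, the length-completed tensor algebra on $\mathfrak{m}$, whereas a degree-$(-1)$ element has exactly one factor of bar-length $2$. Evaluating the cobar differential \eqref{eqn:cobardiff} on such a degree-$(-1)$ element, the $\Delta_1$-term (the bar differential, i.e. the product of $A$) produces $\cdots\otimes(a\cdot a')\otimes\cdots$ while the $\Delta_2$-term (deconcatenation) produces $\cdots\otimes a\otimes a'\otimes\cdots$. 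Thus the image of $\delta$ in degree $0$ is the length-closed two-sided ideal generated by $a\otimes a'-a\cdot a'$, and I would introduce the multiplication map $\Phi:\widetilde{T}\mathfrak{m}\to\widehat{A}$, $a_1\otimes\cdots\otimes a_p\mapsto a_1\cdots a_p\in\mathfrak{m}^p$, which is well defined because a length-$p$ word dies in $A/\mathfrak{m}^n$ once $p\ge n$. By construction $\Phi\circ\delta=0$, so $\Phi$ descends to $H^0(\hat\Omega BA)$.

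The heart of the argument is that $\Phi$ is an isomorphism on $H^0$, and the key point is that $\Phi$ sends the cobar-length filtration $\mathcal{F}_l$ to the $\mathfrak{m}$-adic filtration: a degree-$0$ word of cobar length $\ge l+1$ maps into $\mathfrak{m}^{l+1}$. I would first establish the finite-stage statement $H^0(\underline\Omega BA/\mathcal{F}_l)\cong A/\mathfrak{m}^{l+1}$. This is transparent for small $l$ (for $l=1$ the length-$\le 1$ truncation has degree-$0$ part $\Bbbk_\Lambda\oplus\mathfrak{m}$ and the surviving differential has image exactly $\mathfrak{m}^2$, giving $A/\mathfrak{m}^2$), and in general follows from the bounded length spectral sequence of Proposition \ref{prop:unfiltereddercompl} together with the five lemma applied to the surjections $\underline\Omega BA/\mathcal{F}_{l+1}\twoheadrightarrow\underline\Omega BA/\mathcal{F}_l$. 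Since $\hat\Omega BA=\varprojlim_l\underline\Omega BA/\mathcal{F}_l$, passing to the inverse limit then gives $H^0(\hat\Omega BA)\cong\varprojlim_l A/\mathfrak{m}^{l+1}=\widehat{A}$, as desired.

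The main obstacle I anticipate is this last interchange of $H^0$ with the inverse limit: it calls for a Mittag-Leffler / $\varprojlim^1$-vanishing argument for the tower $\{\underline\Omega BA/\mathcal{F}_l\}$, whose transition maps are surjective, to ensure that no $\varprojlim^1$ correction from the neighboring-degree systems leaks into degree $0$. The second delicate point is the bookkeeping of closures: I must verify that $\ker\Phi$ is exactly the length-completed ideal generated by $a\otimes a'-a\cdot a'$, not merely a dense subspace, and it is precisely the finite-stage isomorphisms $H^0(\underline\Omega BA/\mathcal{F}_l)\cong A/\mathfrak{m}^{l+1}$ that pin this down level by level. All signs follow the Koszul conventions of \eqref{eqn:cobardiff} and Remark \ref{rmk:signdiffdg} and are routine to track.
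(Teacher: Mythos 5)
Your overall strategy coincides with the paper's: reduce to the finite truncations $I_l(\underline{\Omega} B W_\Lambda)$ of the completed cobar construction, identify the $0$-th cohomology of each truncation with a finite quotient $A/\mathfrak{m}^{\bullet}$, and pass to the inverse limit. Where you differ is in the mechanism at each finite stage. The paper keeps the chain-level $W_\Lambda$ and runs the spectral sequence of the double complex \eqref{eqn:doubleomegab} starting with the \emph{vertical} differential, which is contracted by an explicit homotopy $r$ away from the first and $l$-th rows; the degree-$0$ piece of the $E_2$-page is then forced to be $H^0(\bar W_\Lambda)$ by the grading assumption, with the incoming differential having image the $l$-fold products. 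You instead pass to a minimal model and compute the degree-$0$ part of the cobar complex directly as $\widetilde{T}\mathfrak{m}$ with the differential's image the length-closed ideal generated by $a\otimes a'-a\cdot a'$, which is a more hands-on and arguably more transparent identification of the same answer. Your explicit attention to the $\varprojlim^1$ issue for the tower $\{\underline{\Omega} B A/\mathcal{F}_l\}$ is a point the paper leaves implicit, and is welcome.

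The one genuine gap is the assertion that ``in the situation at hand $HW(G,G;\Lambda)$ is concentrated in degree $0$.'' Assumption \ref{assume:kospairinw} only gives $\hom^{>0}_{\mathcal{W}Fuk(M)}(G,G)=0$ at the chain level; nothing in the stated setting rules out cohomology in negative degrees, and the paper's proof is written precisely so as not to need such a hypothesis (it only observes that negative-degree classes cannot contribute to the degree-$0$ piece of the $E_2$-page). As written, your reduction to an ordinary associative algebra $A=H^0(W_\Lambda)$ therefore proves the proposition only under an extra hypothesis. The gap is reparable within your own framework: the degree count $p-\sum_i k_i=0$ together with $|w_i|\le 0$ already forces every tensor factor of a degree-$0$ cobar word to be a bar-length-one factor lying in $\bar A^0=\mathfrak{m}$, and the only degree-$(-1)$ words whose differential lands in degree $0$ are those with a single bar-length-two factor of degree-$0$ entries; so your identification $(\hat\Omega BA)^0\cong\widetilde{T}\mathfrak{m}$ and the description of $\operatorname{im}\delta$ survive verbatim with $A$ the full minimal model, provided you also note that higher products $m_k:(A^0)^{\otimes k}\to A^{2-k}$ never output into degree $0$ for $k\ge 3$. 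With that repair, and with the Mittag--Leffler argument you already flag, the proof is complete.
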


Note that the augmentation $\epsilon_L$ induces an algebra homomorphism $H^0 (W_\Lambda) \to \Bbbk_\Lambda$, and its kernel is given by $H^0 (\bar{W}_\Lambda)$. In particular, $H^0 (\bar{W}_\Lambda)$ is a maximal ideal of $H^0 (W_\Lambda)$.

\begin{proof}
From the discussion so far, the question boils down to identify the $0$-th cohomology of the completion of $\left\{  I_l (\underline{\Omega} B W_{\Lambda}) \right\}_{l \geq 1}$.
For this purpose, let us look into the cohomology of $I_l (\underline{\Omega} B W_{\Lambda})$. Recall that it can be understood as the double complex described in \eqref{eqn:doubleomegab}. Let us consider the spectral sequence for $I_l (\underline{\Omega} B W_{\Lambda})$ viewed as the double complex given in \eqref{eqn:doubleomegab}, but we begin with the vertical differential this time which is simply switching $\otimes$ to $\boxtimes$. Indeed, this is exact except on  the first low, and on the $l$-th row since we cut-off the $l+1$-th row and above. To see this, one can use a homotopy $r$ on $\underline{\Omega} B W_{\Lambda}$
$$r:  \bar{W}_{\Lambda}^{\otimes i_1} \boxtimes  \bar{W}_{\Lambda}^{\otimes i_2} \boxtimes \cdots \otimes  \bar{W}_{\Lambda}^{\otimes i_k} \to   \bar{W}_{\Lambda}^{\otimes i_1 + i_2} \boxtimes \cdots \boxtimes  \bar{W}_{\Lambda}^{\otimes i_k}$$
which is defined to be zero for $i_1 \neq 1$ and equal to the natural isomorphism
$$ \bar{W}_{\Lambda}  \boxtimes  \bar{W}_{\Lambda}^{\otimes i_2} \boxtimes \cdots \boxtimes  \bar{W}_{\Lambda}^{\otimes i_k} \to   \bar{W}_{\Lambda}^{\otimes 1 + i_2} \boxtimes \cdots \boxtimes   \bar{W}_{\Lambda}^{\otimes i_k}.$$ 

Due to our assumption on the grading on $W$, the only possible degree $0$ piece on the $E_2$-page is $H^0 (\bar{W}_{\Lambda})$, where horizontal differential takes the ($0$-th) cohomology of $\bar{W}_\Lambda$ in this degree. 
The image of the differential in $H^0 (\bar{W}_{\Lambda})$ on this page consists of elements of the form $a_1 \cdot a_2 \cdots a_l$ where ``$\,\, \cdot \,\, $" denotes the \emph{associative} product on $H^0 (\bar{W}_{\Lambda})$ induced by $m_2$ where $a_i \in H^0 (\bar{W}_{\Lambda})$. Therefore, it stabilizes to the quotient $H^0 (\bar{W}_{\Lambda}) / \prod_{i=1}^l H^0 (\bar{W}_{\Lambda})$ afterwards. The diagram below shows the computation of the differential on the $E_2$-page when $l=3$.

{\footnotesize
\begin{equation*}
\xymatrix{\cdots \ar[r]& 
*\txt{$  \left( H^0 (\bar{W}_\Lambda) \otimes   H^0 (\bar{W}_\Lambda) \boxtimes   H^0 (\bar{W}_\Lambda) \boxtimes H^0 (\bar{W}_\Lambda)\right) \oplus$ \\ $ \left( H^0 (\bar{W}_\Lambda) \boxtimes   H^0 (\bar{W}_\Lambda) \otimes   H^0 (\bar{W}_\Lambda) \boxtimes H^0 (\bar{W}_\Lambda)\right) \oplus $ \\  $\left( H^0 (\bar{W}_\Lambda) \boxtimes   H^0 (\bar{W}_\Lambda) \boxtimes   H^0 (\bar{W}_\Lambda) \otimes H^0 (\bar{W}_\Lambda)\right) $} \ar[r]. 
& 0 & &	  \\ \cdots \ar[r]&
0\ar[r] \ar[u]&  0 \ar[r] \ar[u]& 0 &  \\ \cdots \ar[r]& H^{-3} (\bar{W}_\Lambda)  \ar[r] \ar[u]& H^{-2} (\bar{W}_\Lambda)  \ar[r] \ar[u]& H^{-1} (\bar{W}_\Lambda)  \ar[r] \ar[u]&  H^{0} (\bar{W}_\Lambda)  \\}.
\end{equation*}
}

 \begin{center}
 \begin{tikzpicture}
% the matrix entries
\matrix (mat) [table]
{
 $0$ & $0$  & $0$ & $0$  &  $0$    \\
$a_1 \boxtimes  a_2 \otimes a_3 \boxtimes a_4$ & $a_1 \boxtimes  a_2 \cdot a_3 \boxtimes a_4$  &  $0$ & $0$ &  $0$ \\
  & $a_1 \otimes a_2 \cdot a_3 \boxtimes a_4 $ &  $a_1 \cdot a_2 \cdot a_3 \boxtimes a_4$ & $0$ & $0$  \\
  & & $a_1 \cdot a_2 \cdot a_3 \otimes a_4$ & $a_1 \cdot a_2 \cdot a_3 \cdot a_4 $ & $0$    \\
};
% the matrix rules
\foreach \x in {1,...,4}
{
  \draw 
    ([xshift=-.5\pgflinewidth]mat-\x-1.south west) --   
    ([xshift=-.5\pgflinewidth]mat-\x-5.south east);
  }
\foreach \x in {1,...,4}
{
  \draw 
    ([yshift=-.5\pgflinewidth]mat-1-\x.north east) -- 
    ([yshift=-.5\pgflinewidth]mat-4-\x.south east);
}   
 
% the arrows
\begin{scope}
\draw[<-][shorten >= -2pt, shorten <= -3pt]  (mat-2-1.east) -- (mat-2-2.west);
\draw[->][shorten >= 6pt, shorten <= 6pt]  (mat-3-2.center) -- (mat-2-2.center);
\draw[<-][shorten >= -3pt, shorten <= -5pt]  (mat-3-2.east) -- (mat-3-3.west);
\draw[->][shorten >= 6pt, shorten <= 6pt]    (mat-4-3.center) -- (mat-3-3.center);
\draw[<-][shorten >= -3pt, shorten <= -5pt]  (mat-4-3.east) -- (mat-4-4.west);
%\draw[->]  (mat-5-4.center) -- (mat-4-4.center);
%\draw[->]  (mat-5-4.center) -- (mat-5-5.center);
\end{scope}
\end{tikzpicture}
\end{center}

\end{proof}

Proposition \ref{cor:formalversionk} admits the following interpretation in view of mirror symmetry. When $G$ is a generator of $\mathcal{W}^\Lambda$, then $H^0 (W_\Lambda) = H^0 (\hom_{\mathcal{W}^\Lambda} (G,G))$ can be understood as the ring of (global) functions of the mirror space. In this perspective, the completion $\widehat{H^0 (W_\Lambda)} (\cong \hat{A}_L)$ describes the formal neighborhood of the point that is mirror to $L$, as its corresponding maximal ideal is 
the kernel of the augmentation $\varepsilon_L : H^0 (W_{\Lambda} ) \to \Bbbk_\Lambda$ associated with $L$.

\subsection{Analytic neighborhood of $L$}
The isomorphism $\widehat{H^0 (W_\Lambda)} \to \hat{A}_L=H^0 (BV_{\Lambda}^{\sharp_{\Lambda}})$ in Proposition \ref{cor:formalversionk} can be described as follows. Recall that we have an algebra homomorphism 
$$ \kappa : H^0 (W_\Lambda) \to A_L$$
where we abuse notation by writing the same symbol $\kappa$ for its induced map on the $0$-th cohomology (see Proposition \ref{prop:algebraickappa1}). After composing $A_L \to \hat{A}_L$, we obtain
$$ \tilde{\kappa} : H^0 (W_\Lambda) \to A_L \to \hat{A}_L .$$
Let $\mathfrak{m}$  denote $ \mathrm{ker} ( \varepsilon_L : H^0 (W_{\Lambda} ) \to \Bbbk_\Lambda) = H^0 (\bar{W}_{\Lambda})$ for simplicity.
Consider an element $(w_k)_{k \geq 1}$ in the $\mathfrak{m}$-adic completion $\widehat{H^0 (W_\Lambda)}$, that is, $w_k \in H^0 (W_\Lambda) / \mathfrak{m}^k$ such that $w_k \equiv w_i \mod \mathfrak{m}^i$ for $i \leq k$. Since $\tilde{\kappa}$ is a ring homomorphism, it maps an element in $\mathfrak{m}^k$ to an element of $A_L$ whose word-length is at least $k$. In other words, its image under $\tilde{\kappa}$, viewed as a linear map $BV_{\Lambda} \to \Lambda$, vanishes on $\mathcal{F}^{k-1} BV_{\Lambda} = \oplus_{l=0}^{k-1} V_{\Lambda}^{\otimes l}$.  Thus, if $\mathbf{X}  \in B_l V_{\Lambda}$, then $\kappa  (w_k) (\mathbf{X})$ stabilizes after $k = l+1$. Therefore $\tilde{\kappa}$ extends to a map $\hat{\kappa} :\widehat{H^0 (W_\Lambda)} \to \hat{A}_L$.

From the discussion above, $\hat{\kappa} :\widehat{H^0 (W_\Lambda)} \to \hat{A}_L$ is simply an extension of $\kappa$  which was defined implicitly by 
\begin{equation}\label{eqn:kappacformula1}
\kappa: H^0(W_\Lambda) \to A_L \qquad \sum_{i \geq 0} m_{i+2}^{\mathcal{W}^\Lambda} (Z,P, \overbrace{b,b,\cdots, b}^{i}) =  P \,  \kappa  (Z) 
\end{equation}
where $b= \sum x_i X_i$ is a formal linear combination of degree 1 generators $\widetilde{X}_i$ of $V =\hom_\mathcal{W^\Lambda} (L,L)$ coupled with their dual coordinate functions $x_i$. More specifically, $ \kappa (Z)$ is obtained by expanding the right hand side by linearity using the rule \eqref{eqn:expruleback}, and $\hat{\kappa}$ allows a series in the $\mathfrak{m}$-adic completion in the place of $Z$.
Note that $\sum x_i X_i = \sum \tilde{x_i} \widetilde{X}_i$ since the valuations of $x_i$ and $X_i$ compensate each other, and one can rewrite \eqref{eqn:kappacformula1} in terms of exact generators in order to have the analogue of $\hat{\kappa}$ in $\C$-coefficient setting. In this case, \eqref{eqn:kappacformula1} should be expanded in terms of $\tilde{x_i}$'s, as otherwise the expansion would involve some nontrivial powers of $T$'s.

Through an obvious (proper) inclusion $A_L \subset \hat{A}_L$, $A_L$ can be viewed as a subalgebra of the formal completion of $H^0 (\hom_{\mathcal{W}^\Lambda } (G,G)) (\cong \hat{A}_L)$. In view of interpretation made at the end of \ref{subsec:formalnbdhat}, $A_L$ corresponds to a neighborhood of the point mirror to $L$ bigger than its formal neighborhood. 
It will give us some benefit in studying the local structure of the mirror space in Section \ref{sec:locsyzapp1}. The following summarizes our discussion in this section.

 \begin{thm}\label{thm:mainkd}
If two exact Lagrangian branes $L=\oplus_{i}^r L_i $ and $G=\oplus_{i}^r G_i$ in $\mathcal{W}Fuk (M)$ satisfy Assumption \ref{assume:kospairinw},
then the Maurer-Cartan algebra $A_L$ is properly embedded in the $\mathfrak{m}$-adic completion $\widehat{HW^0 (G,G;\Lambda)}$ of $HW^0 (G,G;\Lambda)$ for $\mathfrak{m} = \ker \varepsilon_L$.

The embedding of $A_L$ is the restriction of the inverse of $\hat{\kappa}: \widehat{HW^0 (G,G;\Lambda)} \stackrel{\cong}{\to} \hat{A}_L$ where $\hat{\kappa}$ is given as the natural extension of
\begin{equation}\label{eqn:formulakoszul11}
\sum m_k (Z,P,b,\cdots, b) = P \cdot \kappa(Z) 
\end{equation}
where $b= \sum x_i X_i$ is a formal linear combination of degree 1 generators.
\end{thm}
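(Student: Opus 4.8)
The plan is to assemble the constructions and comparisons established so far, the only genuinely new content being the identification of the abstract isomorphism of Proposition~\ref{cor:formalversionk} with the explicit map $\hat\kappa$, together with the recognition of $A_L$ as the \emph{convergent} subalgebra sitting properly inside. First I would record the bookkeeping: $HW^0(G,G;\Lambda)=H^0(W_\Lambda)$ by definition; the augmentation $\varepsilon_L$ of Lemma~\ref{lem:augstrict} induces on $H^0$ an algebra map to $\Bbbk_\Lambda$ whose kernel is the maximal ideal $\mathfrak m=H^0(\bar W_\Lambda)$, so that the $\mathfrak m$-adic completion $\widehat{HW^0(G,G;\Lambda)}$ is defined; and by Proposition~\ref{prop:algebraickappa1} the Koszul map descends on cohomology to a ring homomorphism $\kappa:H^0(W_\Lambda)\to A_L$. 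Composing with the inclusion $A_L\hookrightarrow\hat A_L$ and extending $\mathfrak m$-adically as in the discussion preceding the theorem yields $\hat\kappa:\widehat{H^0(W_\Lambda)}\to\hat A_L$, defined implicitly by \eqref{eqn:formulakoszul11}.

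The main step is to show $\hat\kappa$ is an isomorphism, and I would argue this filtration-by-filtration rather than by inverting the defining formula directly. Give the source its $\mathfrak m$-adic filtration and the target $\hat A_L=H^0((BV_\Lambda)^{\sharp_\Lambda})$ the word-length (tensor-length) filtration. The stabilization observation already made shows that $\hat\kappa$ sends $\mathfrak m^{k}$ into the space of series of word-length $\ge k$, so $\hat\kappa$ is filtered; moreover both sides are complete and separated for these filtrations. It therefore suffices to check that $\hat\kappa$ is an isomorphism on associated graded pieces, and this should follow from the double-complex comparison of Proposition~\ref{prop:unfiltereddercompl} and the $E_2$-page computation in Proposition~\ref{cor:formalversionk}: the $l$-th graded piece of the source is $\mathfrak m^{l}/\mathfrak m^{l+1}$, a quotient of tensor powers of $H^0(\bar W_\Lambda)$, and under \eqref{eqn:formulakoszul11} it maps isomorphically onto the length-$l$ part of $\hat A_L$. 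A standard completeness argument then upgrades the isomorphism on associated graded to an isomorphism of completions.

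Granting that $\hat\kappa$ is an isomorphism, the embedding of the Maurer--Cartan algebra is immediate: the composite $A_L\hookrightarrow\hat A_L\xrightarrow{\hat\kappa^{-1}}\widehat{HW^0(G,G;\Lambda)}$ is an injective ring homomorphism, visibly the restriction of $\hat\kappa^{-1}$ to the subalgebra $A_L\subset\hat A_L$. It is \emph{proper} precisely because $A_L\subsetneq\hat A_L$ is proper: by the description in \ref{subsec:formalnbdhat}, $\hat A_L$ consists of all formal noncommutative series $\sum_I\lambda_I x^I$ with no constraint on the coefficients, whereas $A_L=H^0((BV_\Lambda)^\vee)$ is cut out by the boundedness condition that $\{\mathrm{val}(\lambda_I)\}$ be bounded below, and series violating this condition plainly exist.

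I expect the main obstacle to be the middle step: verifying that the geometrically defined $\hat\kappa$ genuinely realizes the \emph{specific} isomorphism of Proposition~\ref{cor:formalversionk}, and not merely that it is filtered. Concretely, one must match the operations $m_{l+k+1}(Z_1,\dots,Z_l,P,\tilde b,\dots,\tilde b)$ defining $\kappa$ against the horizontal differentials of the double complex \eqref{eqn:doubleomegab} whose stable cohomology is $H^0(\bar W_\Lambda)/\prod_{i=1}^{l}H^0(\bar W_\Lambda)$, so as to see that the induced associated-graded map is exactly the iterated multiplication map. This is a sign- and bookkeeping-intensive verification, but it is of the same character as the computation already carried out in the proof of Proposition~\ref{prop:algebraickappa1}.
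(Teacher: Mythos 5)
Your proposal is correct and follows essentially the same route as the paper: Theorem \ref{thm:mainkd} is presented there as a summary of Propositions \ref{prop:unfiltereddercompl} and \ref{cor:formalversionk}, the $\mathfrak m$-adic extension of $\kappa$ to $\hat\kappa$, and the proper inclusion $A_L\subset\hat A_L$ coming from the boundedness condition on coefficients. The one step you flag as the remaining obstacle --- verifying that the geometrically defined $\hat\kappa$ realizes the abstract isomorphism obtained from the Yoneda quasi-isomorphism and the double-complex computations --- is precisely the point the paper itself treats only in outline, so your associated-graded packaging is a faithful (if slightly reorganized) version of the published argument.
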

 
Identified as a subalgebra of $\widehat{HW^0 (G,G;\Lambda)}$, a general element $F$ of $A_L$ can be described as follows. Recall that $\widehat{HW^0 (G,G;\Lambda)} / \mathfrak{m}$ is generated by finitely many elements, say $Z_1, \cdots, Z_N$, as $\Bbbk_\Lambda$-module. By scaling, we may assume that $\kappa(Z_i)$ has valuation $0$ for each $i$. Then $F$ can be identified as  a (noncommutative) power series in $Z_1, \cdots, Z_N$, valuations of whose coefficients are bounded below. More precisely, $F = (w_k)_{k \geq 1}$ with $w_k \in HW^0 (G,G;\Lambda) / \mathfrak{m}^k$ givens as the sum of $\deg \leq (k-1)$ terms in this series. Note that any element in $\widehat{HW^0 (G,G;\Lambda)}$ can also be written as such a series, but with no requirement on  valuations of the coefficients.

\begin{remark}
Recall that the map \eqref{eqn:formulakoszul11} is actually defined on the chain level. 
Taking the formal linear combination $b$ including generators of all degrees as in Lemma \ref{lem:donbvvee}, we have an $\AI$-homomorphism 
\begin{equation}\label{eqn:kappachain1}
\kappa:CW(G,G;\Lambda) \to \mathcal{A}_L.
\end{equation}
 The relationship between algebraic structures on $CW(G,G)$ and the Maurer-Cartan algebra $\mathcal{A}_L$ can be heuristically seen by a usual cobordism argument in Floer theory. For example, Figure \ref{fig:cobord} (a) describes the cobordism that implies compatibility of the chain-level Koszul map \eqref{eqn:kappachain1} with the differential on $CW(G,G)$ and $\mathcal{A}_L$. 
 %Note that the differential on $\mathcal{A}_L$ is essentially the Maurer-Cartan equation $m(e^b)=0$ itself. 
 Likewise, the compatibility of $\kappa$ with products can be intuitively seen as in Figure \ref{fig:cobord} (b).
\end{remark}

\begin{figure}[h]
	\begin{center}
		\includegraphics[scale=0.35]{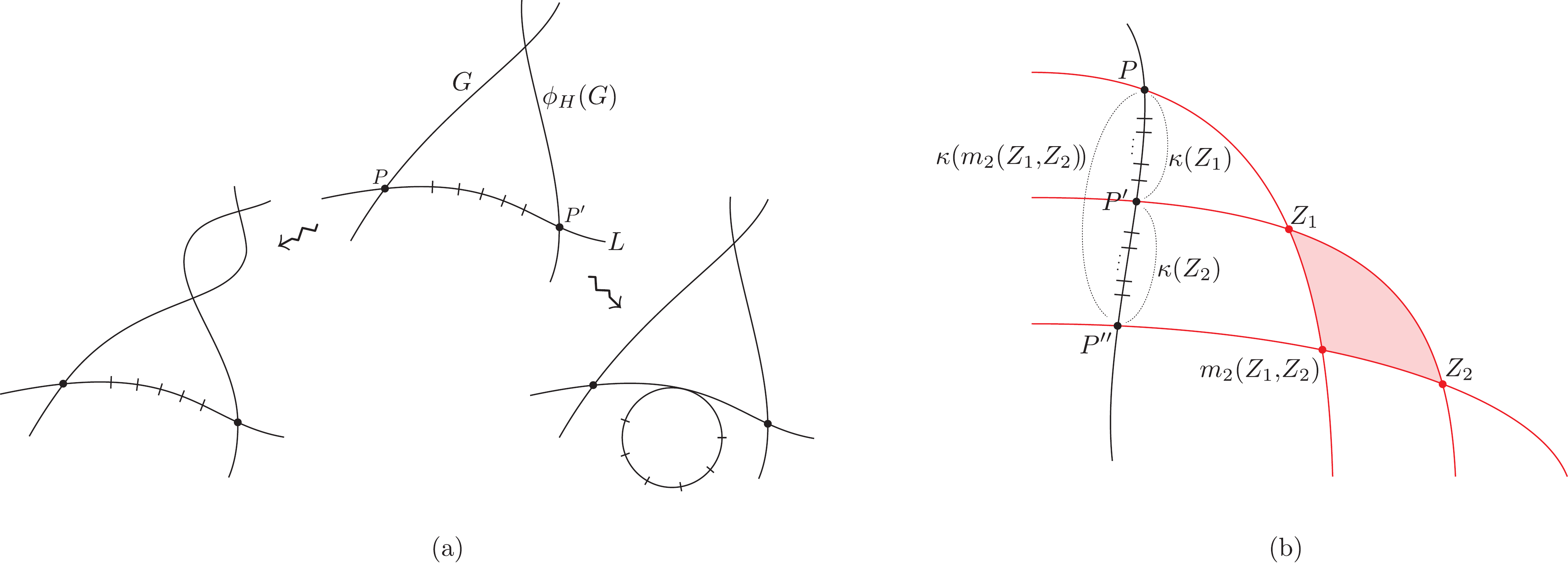}
		\caption{Compatibility between (a) differentials and (b) products on $CW(G,G)$ and $\mathcal{A}_L$}
		\label{fig:cobord}
	\end{center}
\end{figure}

%\subsection{Completion of $W$}

\subsection{(local) SYZ fibration: a generating section and fibers}\label{subsec:locsyztfibers}
We next consider the following geometric setting for a local SYZ fibration. Let $(M,\omega = d \Theta)$ be a Liouville manifold and, suppose it admits a Lagrangian torus fibration $\pi : M \to B$. We will deal with fibers $\pi^{-1} (b)$ sitting near an exact torus fiber $L_0$. They have minimal Maslov number $0$, but are not exact in general.
By Theorem \ref{thm:mainkd}, the Maurer-Cartan algebra of $L_0$ can be obtained as a subspace of the $\mathfrak{m}$-adic completion of $\hom_{\mathcal{W}^\Lambda} (G,G)$ for $\mathfrak{m} = \ker (\varepsilon_{L_0} )$. We are interested in the behavior of nearby torus fibers $L$ and their associated Koszul maps $\kappa: \hom_{\mathcal{W}^\Lambda} (G,G) \to A_L $. 

%Suppose $L$ is sufficiently close to $L_0$ (in the sense to be clear later). 
We claim that there exists an $1$-form $\Theta$' satisfying $d \Theta' = \omega$ and $\Theta'|L = d f$ for some function $f: L \to \R$, if $L$ is sufficiently close to $L_0$. Let $u_1,\cdots u_n$ and $\rho_1,\cdots \rho_n$ be action-angle coordinates such that $(u_1,\cdots,u_n)=(0,\cdots,0)$ for points in $L_0$. If we set $a_i:= \int_{\beta_i} \Theta|_L $ for $\beta_i$ a loop along the direction of $\rho_i$, then $(u_1,\cdots,u_n)= (a_1,\cdots,a_n)$ gives the action coordinates for $L$ by Stokes' theorem since $d \Theta = \omega$.

Let us take contractible open sets $U_1, U_2 (\subset B)$ with $U_1 \subset U_2$, both of which contain $(0,\cdots,0)$ and $(a_1,\cdots,a_n)$ where we use $u_i$ as coordinates on the base $B$ of $\pi$. Then we choose compactly supported functions $h_i (u_1,\cdots, u_n)$ for $1 \leq i \leq n$ such that $h_i|_{U_1} \equiv a_i$ and $h_i|_{B\setminus U_2} \equiv 0$. They give rise to an 1-form $\widetilde{\Theta}' := \Theta - \sum h_i d \rho_i$ on $M$.
When $L$ is sufficiently close to $L_0$, we can make $|d h_i|$ small enough so that  $d \widetilde{\Theta}'$
%$ = \omega - \sum_{i,j} \partial_{u_j} h_i du_j  d\rho_i$ 
is still symplectic.

Notice that $d \widetilde{\Theta}'$ and $\omega$ differ from each other by a compactly supported exact $2$-form. Hence Moser's trick applies to the situation to produce a (compactly supported) diffeomorphism $\psi$ such that $\psi^\ast (d \widetilde{\Theta}' ) =  \omega$. As $\omega = d \tilde{\Theta}'$ on $U$ that contains $L$, $\psi$ preserves $L$. In particular, if we set $\Theta':=\psi^\ast \widetilde{\Theta}'$, then $d \Theta'=\omega$ and $\Theta'|_L = \widetilde{\Theta}'|_L$ is exact since its integration over any loops in $L$ vanishes by construction. By construction, the close form $\Theta'|_G - \Theta|_G$ vanishes away from a contractible subset of $G$, and hence is exact. Thus $G$ is an exact Lagrangian with respect to $\Theta'$, and the pair $(G,L)$ of $\Theta'$-exact Lagrangians fits into the setting of Theorem \ref{thm:mainkd}. Therefore the Maurer-Cartan algebra of $L$ can be obtained as a suitable completion of $\hom_{\mathcal{W}^\Lambda} (G,G)$. 

We can make the above discussion more concrete by relating Floer theory of $(G,L)$ with that of $(G,L_0)$ as follows.
The closed $1$-form $\alpha:=\Theta' - \Theta$ induces a symplectomorphism $\phi$ by integrating the symplectic vector field $X_\alpha$ given by $\iota_{X_\alpha} \omega = \alpha$. Since $\alpha = - \sum a_i d\rho_i $ on $\pi^{-1} (U_1)$, the symplectomorphism $\phi$ simply translates $L$ back to $L_0$ via $ (u_1, \cdots, u_n) \mapsto (u_1- a_1 , \cdots, u_n - a_n )$. 
On the other hand, $\pi^{-1} (U_2) \cap G$ lies in some contractible open set since $G$ intersects each torus fiber at one point, and the $1$-form $\alpha$ admits a well-defined primitive, say $\widetilde{F}$, over this set. We extend $\widetilde{F}$ outside $\pi^{-1}(V)$ using suitable bump functions to get $F: M \to \R$.  Thus, restricting ourselves to $G$, we have $\phi(G) = \phi_F (G)$ since $dF$ and $\alpha$ agrees with each other on a region near $G$.

From the above discussion, we conclude that Floer theory of the pair $(G,L)$ can be identified with that of $(\phi  (G), \phi  (L)) = (\phi_{F} (G), L_0)$, and hence that of $(G,L_0)$ as $F$ is a compactly supported Hamiltonian. Suppose that we make the wrapping for both $G$ and $\phi_F (G)$ happen away from the support of $F$ so that $\hom_{\mathcal{W}^\Lambda} (G,G)$ and $\hom_{\mathcal{W}^\Lambda} (\phi_F (G), \phi_F (G))$ have precisely the same set of geometric generators. However, these set-theoretically identical generators have different valuations on the two $\Lambda$-vector space since $\phi_F$ does \emph{not} preserve $\Theta$ in general. In practice, 
$$ \hom_{\mathcal{W}^\Lambda} (G,G) \stackrel{\cong}{\to} \hom_{\mathcal{W}^\Lambda} (\phi_F (G), \phi_F (G))$$
simply scales these geometric intersection points by suitable power of $T$. In this case, the Koszul map $\hom_{\mathcal{W}^\Lambda} (G,G) \to A_L$ can be easily obtained from that for $(G,L_0)$ after such scaling generators of $\hom_{\mathcal{W}^\Lambda} (G,G)$.

\subsection{Example of $T^\ast S^1$}\label{subsec:ts1exfirst}

Let us first look into the simplest case of $M = T^\ast S^1$ as a warm-up example. 
We use coordinate $(t,s) \in T^\ast S^1=S^1 \times \mathbb{R}$ where $t \in [0,2\pi]/0\sim 2\pi$ as in Example \ref{ex:cotcirclenegt}, so we have $\omega=dsdt$ with $\Theta=sdt$. 
The zero section $L:= S^1$ is exact with respect to $\Theta$, and clearly,
%we can compute in $\C$-coefficient and take tensor product with $\Lambda$ in the final stage. Clearly, 
$L$ and the cotangent fiber $G$ satisfy Assumption \ref{assume:kospairinw}.
As before, we take the basis of $H^\ast \hom_{\mathcal{W}^\Lambda} (G,G) (= HW(G,G;\Lambda))$ formed by $\widetilde{Z_i}$ for $i \in \mathbb{Z}$ with the multiplication $ m_2 ( \widetilde{Z_i}, \widetilde{Z_j}) = \widetilde{Z_{i+j}}$ 
being the only nontrivial $\AI$-operation. 
On the chain level, $\widetilde{Z}_i$ can be represented by $T^{-C\left(\frac{i^2}{w} + \frac{w}{(1-\epsilon)^2}\right)} Z_i^{(w)} \in CF(G,G;wH)$ (using notations in Example \ref{ex:cotcirclenegt}) for $w$ large enough. 
Notice that $H^\ast \hom_{\mathcal{W}^\Lambda} (G,G) \cong \Lambda [ \mathbf{Z}, \mathbf{Z}^{-1} ]$ is precisely the function ring of $\Lambda^\times$, where $\mathbf{Z}:=\widetilde{Z_1}$ serves as the standard coordinate, and $\widetilde{Z_0}$ is identified with $1 \in \Lambda [ \mathbf{Z}, \mathbf{Z}^{-1} ]$ %(i.e., the function on $\Lambda^\times$ whose value is constantly $1$).

We write $P$ for the unique intersection point between $G$ and $L$, and at the same time, $P$ denotes the corresponding geometric generator of $\hom_{\mathcal{W}^\Lambda} (G,L)$. In this example, we have $\widetilde{P}=P$ 
%of $\hom_{\mathcal{W}^\Lambda} (G,L)$ agrees with $P$ 
since the primitives of $\Theta|_L$ and $\Theta|_G$ are both zero. 
From direct counting (see Figure \ref{fig:ts1} (a)), 
$m_2 (\widetilde{Z_1},\widetilde{P}) = \widetilde{P}$, where the Novikov parameter $T$ does not appear since the computation involves exact generators that has zero actions. Hence, the augmentation $\varepsilon_L : H^\ast \hom_{\mathcal{W}^\Lambda} (G,G) \to \Lambda$ associated with $L$ sends $\widetilde{Z_1}$ to $1$ which implies $\varepsilon_L (\widetilde{Z_i}) = 1$ for all $i$. 
Consequently, $\varepsilon$ has the kernel
$$ \ker \varepsilon_L = \{ \widetilde{Z_i} - \widetilde{Z_0} : i \in \Z \} $$
%= \{ T^{-\frac{1}{2}i^2} Z_i - Z_0 : i \in \Z \},$$
and in particular, the corresponding point in $\Lambda^\times$ is $\mathbf{Z}=1$.

On the other hand, the Maurer-Cartan algebra $A_L$ is a ring $\Lambda \{ x \}$ consisting of  bounded power series, where $x$ is linear dual to $d t \in H^1 (S^1)$. Setting $b=x d t$, the Koszul map $\kappa$ computes
$$\widetilde{P} \, \kappa(\widetilde{Z_1})= \sum_{k \geq 0} m_{k+2} (\widetilde{Z_1}, \widetilde{P}, b, \cdots, b) = \displaystyle\sum_{k=0}^{\infty} \frac{1}{k!} x^k m_2 (\widetilde{Z_1},\widetilde{P}) = \widetilde{P} \, e^{x}, $$
(again no $T$ appears since everything involved has zero actions)
and hence $\kappa\left(\mathbf{Z}\right) = \kappa (\widetilde{Z_1})  =e^x$. Here, $m_{k+2} (\widetilde{Z_1},\widetilde{P},dt,\cdots, dt) = \frac{1}{k!} m_2 (\widetilde{Z_1},\widetilde{P})$ holds, since the left hand side is contributed by geometrically the same disk as $m_2 (\widetilde{Z_1},\widetilde{P})$, but the actual moduli space is (the compactification of) the configuration space $\{ (t_1,\cdots,t_k) : 0<t_1 < \cdots < t_k<1\}$ due to locations of boundary markings for $b$'s, on which the integration of $ev_3^\ast dt \wedge \cdots \wedge ev_{k+2}^\ast dt =  dt_1 \cdots dt_k$ gives the extra factor $\frac{1}{k!}$.
\begin{figure}[h]
	\begin{center}
		\includegraphics[scale=0.5]{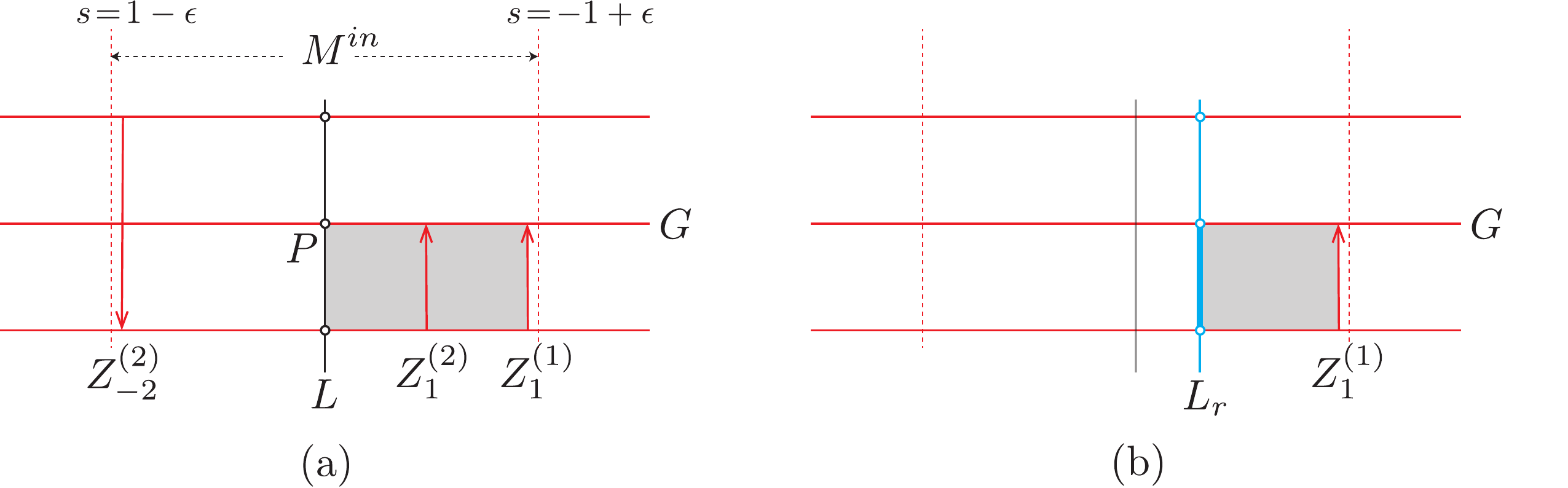}
		\caption{The zero section $L$ and a cotangent fiber $G$ in $T^\ast S^1$}
		\label{fig:ts1}
	\end{center}
\end{figure}

\begin{remark}
We can deform $CF(L,L)$ by $\Lambda_U$-flat connections $\nabla^z$ whose holonomy is parametrized by $z \in \Lambda_U$. The associated $\kappa$ can be computed as 
$$\widetilde{P} \, \kappa(\widetilde{Z_1})=   m_{2}^{\nabla^z} (\widetilde{Z_1}, \widetilde{P}) = \widetilde{P} \, z.$$
Thus $\kappa (\mathbf{Z}) = z$, which is consistent with the earlier computation if we set $z=e^x$.
\end{remark}

%Furthermore, 
By Theorem \ref{thm:mainkd}, the Maurer-Cartan space of $L$ can be obtained as a suitable subspace of the $\mathfrak{m}$-adic completion of $\Lambda [ \mathbf{Z}, \mathbf{Z}^{-1} ]$ where $\mathfrak{m} = \langle \mathbf{Z}-1 \rangle$. Recall that this subspace consists of series
\begin{equation}\label{eqn:b11series}
 c_0 + c_1 (\mathbf{Z}-1) + c_2 (\mathbf{Z}-1)^2 + \cdots
\end{equation}
for $c_i \in \Lambda$, as long as $-\infty < \inf \{val(c_i) \colon i=0,1,2,\cdots\} $. In particular, $x$ can be identified with
$$ (\mathbf{Z}-1) - \frac{1}{2} (\mathbf{Z}-1)^2 + \frac{1}{3} (\mathbf{Z}-1)^3 - \cdots = \log \left((\mathbf{Z}-1) +1 \right),$$
where ``$\log$" on the right hand side means a priori a formal expansion.
In order for such a series \eqref{eqn:b11series} to be well-defined as a genuine  function on $\Lambda^\times$, $\mathbf{Z}$ must be lie in $1 + \Lambda_+$. Notice that $\mathbf{Z}=1 \in \Lambda^\times$ is precisely the point corresponding to $L$. We conclude that the Maurer-Cartan space of $L$ sits in $\Lambda^\times$ as  $B(1,1)$, the open ball centered at $1$ with radius $1$ (which is merely a different way of describing the same set $1+\Lambda_+$). 

One can further deform $L$ by equipping it with a fixed flat connection $\nabla^\alpha$ with holonomy $\alpha \in \Lambda_U$. Repeating the same computation, we see that the object $(L,\nabla^\alpha)$ sits at a point $Z=\alpha$ in $\Lambda^\times$, and the corresponding Maurer-Cartan space is the open ball $B(\alpha,1)$ centered at this point with radius $1$.

We next consider a general torus fiber $L_r:=\{(t,s) \in S^1 \times \mathbb{R} \colon s=r\}$. Since $L_r$ is no longer exact, we cannot directly reduce the computation to $\C$-coefficient case as we did for $L_0$. Let $P_r$
% and $P_r'$ 
 be the unique intersection points in $G \cap L_r$.
However, it is clear from the picture that the only difference is the energy of the contributing disks. More specifically, one needs to additionally take into account the integration of $\Theta$ along the boundary of the disk that lies on $L_r$. For instance, the shaded disk in Figure \ref{fig:ts1} (b) contributes to
$$m_2(\widetilde{Z_1}, P_r) 
%= T^{\frac{1}{2} (1-r)^2} P_r' 
= T^{ -r} P_r,$$
since $\Theta$ restricts to $r dt$ on $L_r$, and its integration over the corresponding boundary segment of the disk is $-r$.
The associated augmentation is given by
%$\varepsilon_{L_r} (Z_1) = T^{\frac{1}{2} - r}$, or 
$\varepsilon_{L_r} (\mathbf{Z}) = \varepsilon_{L_r} ( \widetilde{Z_1}) = T^{ - r}$. 
%In terms of the exact generator $\mathbf{Z} = \widetilde{Z_1}$. 
Therefore the maximal ideal $\ker \varepsilon_r$ indicates the point $\mathbf{Z}= T^{-r}$ in $\Lambda^\times$.
Similarly, $\kappa( \mathbf{Z} )$ has an additional factor $T^{-r}$ compared with  the case of $L_0$, that is, $\kappa( \mathbf{Z} ) = T^{-r} e^x$, and this completely determines $\kappa$ since it is a ring homomorphism.

Setting $\mathbf{Z}_r:= T^{r} \mathbf{Z}$, one can go back to the same situation as in $L=L_0$, except that $\mathbf{Z}_r$ has a nontrivial valuation. To recover $x$, one can take the formal expansion
$$ (\mathbf{Z}_r-1) - \frac{1}{2} (\mathbf{Z}_r-1)^2 + \frac{1}{3} (\mathbf{Z}_r-1)^3 - \cdots = \log \left((\mathbf{Z}_r-1) +1 \right).$$
%which is valid in a different completion of $\hom_{\mathcal{W}^\Lambda} (G,G)$ associated with $\ker \varepsilon_L$. 
Notice that this is consistent with the discussion in \ref{subsec:locsyztfibers}, in that the Koszul map for $L_r$ can be obtained from that of $L_0$ after scaling $\mathbf{Z}$ to $\mathbf{Z}_r=T^{r} \mathbf{Z}$. On the level of algebra, the Maurer-Cartan algebra of $L_r$ is isomorphic to the the completion of $\Lambda [ \mathbf{Z}_r,\mathbf{Z}_r^{-1}]$ at the ideal $\langle \mathbf{Z}_r - 1\rangle$.
% as in \eqref{eqn:compllambda}.
Let us describe the corresponding region in $\Lambda_+$. In the original coordinate $\mathbf{Z}$, one has
$$ T^r (\mathbf{Z}-T^{-r}) - \frac{1}{2} T^{2r} (\mathbf{Z}-T^{-r})^2 + \frac{1}{3} T^{3r} (\mathbf{Z}- T^{-r})^3 - \cdots = \log \left(T^r (\mathbf{Z}-T^{-r}) +1 \right),$$
which is valid as a function $\mathbf{Z}$ for $\mathbf{Z} \in T^{-r} + T^{-r} \Lambda_{+}$. Therefore we conclude that the Maurer-Cartan deformation space of $L_r$ sits in the global mirror $\Lambda_+$ as the open ball $B(T^{-r}, e^r)$.

We remark that the balls obtained by varying $r$ are mutually disjoint. This is consistent with the fact that different torus fibers do not intersect each other, and hence Floer theory between them is trivial.

\subsection{$\Lambda_U$-flact connections on $L$}
As one can see in the previous example, in Maurer-Cartan deformation, $\Lambda_U$-connections serve identically as degree one cocycles in $L$ paired with $\C^\ast$-flat connections after exponentiating the variable, as long as the divisor-type axiom holds. 
Therefore, in practice, one may take $(\Lambda_U)^{b_1}$ parametrizing $\Lambda_U$-flat connections on $L$ for related computations, instead of the coordinate functions on $H^1(L)$. Here, $b_1$ denotes the first Betti number of $L$) 

When $\vec{z}$ varies over $(\Lambda_U)^{b_1}$, the associated Koszul map is defined by
\begin{equation}\label{eqn:kosmapconn1}
P \, \kappa_l (Z_1,\cdots,Z_l)=  m_{l+1}^{\nabla^{\vec{z}}} (Z_1,\cdots,Z_l, P),
\end{equation}
and it is essentially determined by the augmentation $\epsilon_L$ (once we know the boundary classes of contributing disks). By our assumption on the degrees, \eqref{eqn:kosmapconn1} is nontrivial only when $l=1$, so it reduces to an algebra homomorphism
\begin{equation}\label{eqn:kosmapconn2}
 H^0 \hom_{\mathcal{W}^\Lambda} (G,G) \to \Lambda \{ z_1^{\pm},\cdots, z_{b_1}^{\pm} \}
\end{equation}
%$$\dfrac{ \Lambda\{ z_i \colon i \in \mathbb{Z} \}}{ \langle z_i z_j = z_{i+j} : i,j \in \mathbb{Z} \rangle}$$
where the right hand side consists of infinite Laurent series $ \sum_{i=1}^{\infty} a_i z^{v_i}$ with $\lim_{i \to \infty} val(a_i) = \infty$. (See Example \ref{ex:expvar11} and the paragraph above it for the related discussions.)

In the applications in Section \ref{sec:locsyzapp1}, we will proceed mainly with connections rather than dealing with divisors on $L$. Readers are warned that one should switch back to the original variable $x_i$ (dual to $X_i \in H^1 (L)$) to apply Theorem \ref{thm:mainkd}.

\section{Examples: local models for SYZ fibrations}\label{sec:locsyzapp1}

We examine the main idea of Theorem \ref{thm:mainkd} using simple, but  illustrative geometric examples, which are typical  local models for SYZ fibrations in low dimensions. When there is no singular fiber, then the fibration is locally the same as $(\mathbb{C}^\times)^n = T^\ast T^n$, which is nothing but the product of  Example \ref{subsec:ts1exfirst} that we have already analyzed in detail. Like before, the Maurer-Cartan space of each torus fiber in $(\mathbb{C}^\times)^n$ forms a small chart in $(\Lambda^\times)^n$. Notice that $(\Lambda^\times)^n$ is $\mathrm{Spec}$ of the endomorphism algebra of the Lagrangian $G \cong \R^n$ in $\mathcal{W}Fuk^\Lambda ((\mathbb{C}^\times)^n)$. These charts are indeed disjoint small balls in $(\Lambda^\times)^n$. More precisely, the Maurer-Cartan space of the  Lagrangian fiber $$L_{\vec{r}} :=\{(z_1,\cdots, z_n) \colon \log  |z_i| = r_i\} $$
equipped with the holonomy $ \nabla^{\vec{c}} = (c_1,\cdots, c_n) \in (\Lambda_{\mathrm{U}})^n$ sits in the global space $(\Lambda^\times)^n$ as a polydisk $B(T^{-r_1} c_1, e^{r_1}) \times \cdots \times B(T^{-r_n} c_n, e^{r_n})$  about the point $ (T^{-r_1} c_1, \cdots, T^{-r_n} c_n )$. 

Before we proceed to our main examples, let us make a short remark on the cotangent bundle of a simply connected manifold and related works.

\subsubsection*{Cotangent bundles}
The wrapped Floer cohomology of a cotangent fiber in $T^\ast L$ has been shown to be quasi-isomorphic to $C_{-\ast} (\Omega_0 L)$ by Abouzaid \cite{abouzaid2012} where $\Omega_0 L$ is the based loop space. On the other hand, the Maurer-Cartan dga $\mathcal{A}_L$ of $L$ can be expressed as $\Omega H_{-\ast} (L;\Lambda)$, where we identify the continuous dual of $H^\ast (L;\Lambda)$ with $H_{-\ast} (L;\Lambda)$ (which is possible since everything is finite dimensional). 

Suppose $L$ is simply connected. Then the $A_\infty$-coalgebra structure on $H_{-\ast} (L;\Lambda)$ is finite by degree reason, i.e, $\Delta_i (c)=0$ except for finitely many $i$'s. Thus one can avoid completion procedure in \ref{subsubsec:cobarconst} for the cobar construction of $H_{-\ast} (L;\Lambda)$, and the resulting dga is quasi-isomorphic to $C_{-\ast} (\Omega_0 L)$ by Adams \cite{Ad}. Combined with the result of Abouzaid, the Maurer-Cartan dga of $L$ can be identified with a suitable completion of the wrapped Floer cohomology of a cotangent fiber. (This point has been already discussed implicitly in \cite{EL-duality}.)
Based on the above observation, we speculate that Theorem \ref{thm:mainkd} generalizes to the nonzero degree components under some milder assumptions on the grading of $G$ and $L$.

\vspace{0.3cm}

We now begin to explore more intriguing examples of local SYZ fibrations that comes with typical type of singularities. We will be interested in the Maurer-Cartan spaces of singular fibers as well .

\subsection{Local model for $I_1$-singular fiber}\label{subsec:i1ex}
Let us consider the following exact symplectic manifold
$$ M= \C^2 \setminus \{z_1 z_2 = \epsilon \},$$
which is a toy model for the $2$-dimensional SYZ fibration that has a unique nodal singular fiber. The fibration is given by 
\begin{equation}\label{eqn:i1syzf}
(z_1,z_2) \mapsto ( |z_1|^2 - |z_2|^2, |z_1 z_2 - \epsilon|)
\end{equation}
and its Floer theory and mirror symmetry have been studied and fully understood through many literatures for e.g., \cite{auroux07}. The singular fiber denoted by $L$ occurs when $|z_1 z_2 - \epsilon|= \epsilon$ and $|z_1|^2 - |z_2|^2 =0$, and $L$ is an immersed sphere with a transversal double point at $z_1=z_2=0$.
It is easy to visualize the Lagrangians in our interest using the conic fibration $w: M \to \C^\ast$, $w(z_1,z_2) = z_1 z_2 - \epsilon$. The projection of torus fibers   draws a concentric circle about the origin, as does that of $L$ in particular (see Figure \ref{fig:A1}). 

The Maurer-Cartan algebra $A_L$ is the ring of bounded power series ring on two variables $u$ and $v$,
$$A_L = \Lambda \{\!\{u,v \}\!\}/ uv = vu,$$
 associated with the immersed generator $U$ and $V$ supported at the nodal point. See for e.g., \cite{Sei_Lec} or \cite{HKL} for more details. In fact, $A_L$ can be computed from the wrapped Floer theory of the dual Lagrangian and the Koszul map $\kappa$.
 
The geometric generators $U$ and $V$ have nontrivial \emph{actions} $\mathcal{A} (U)$ and $\mathcal{A} (V)$ (see \eqref{eqn:actionforimm}) depending on the size of the base circle in $w$-plane that $L$ projects to. Since $U$ and $V$ are complementary to each other supported at the same self-intersection, we have $\mathcal{A} (U) + \mathcal{A} (V) =0$. As before, we denote by $\widetilde{U}$ and $\widetilde{V}$ the associated exact generators, i.e., $\widetilde{U} = T^{\mathcal{A} (U)} U$ and $\widetilde{V} = T^{\mathcal{A} (V)} V$.
  
Let us next consider $G$, a Lagrangian section of \eqref{eqn:i1syzf} that projects to the positive real axis in $w$-plane. Over each point in the positive real axis lies $\C^\ast \cong T^\ast S^1$ given by $z_1 z_2 = const.$ and $G$ is isomorphic to a cotangent fiber of $T^\ast S^1$ along this direction. 
$G$ generates the wrapped Fukaya category of $M$, and hence Theorem \ref{thm:mainkd} applies to the situation in this example.\footnote{One technical subtlety here is that unless we allow immersed Lagrangians as objects of $\mathcal{W}^\C$, we may need to identify $L$ with a  twisted complex $\tilde{L} \stackrel{\delta}{\to} \tilde{L}$ built out of the thimble $\tilde{L}$ emanating from $(z_1,z_2)=(0,0)$.
 %while the immersed Floer theory of $L$ itself has been established by \cite{AJ} (or \cite[Section 4, 6]{CRGG} is closer to our setting). 
 As computations with $L$ or with $\tilde{L} \stackrel{\delta}{\to} \tilde{L}$ are not much different from each other, we proceed with $L$ itself, here.}
	The wrapped Floer cohomology of $G$ can be computed by counting holomorphic sections of the fibration $w$ following the idea of \cite[Proposition 4.5]{Pas}. We remark that \cite{CU} computed the wrapped Floer cohomology of analogous Lagrangians in more general situation of $A_n$. Our computation below uses a Hamiltonian similar to ``$H_2$" appearing in \cite[Section 6]{CPU}.
\begin{figure}[h]
\begin{center}
\includegraphics[height=1.8in]{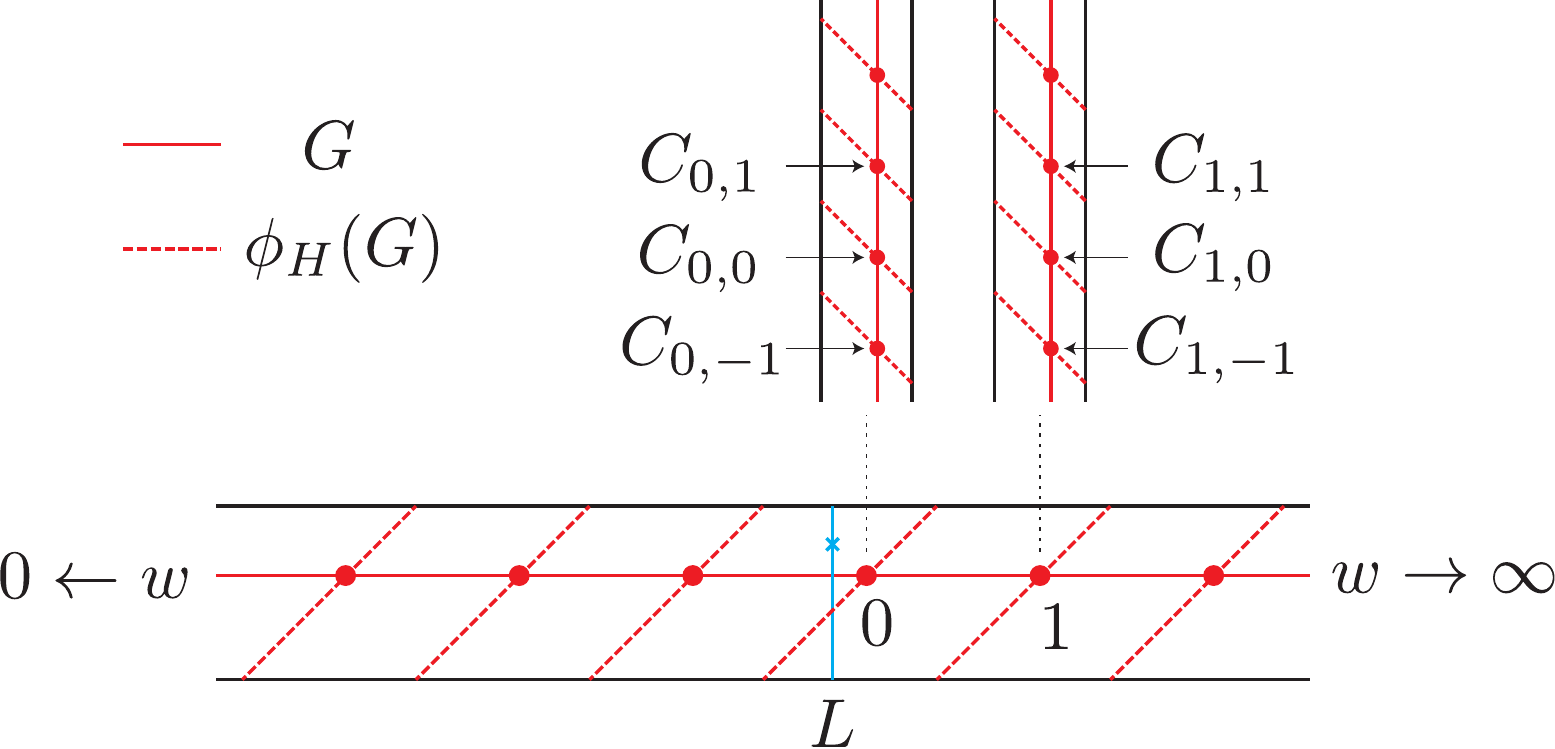}
\caption{Generators of $\hom_{\mathcal{W}^\Lambda} (G,G)$ in the picture of the conic fibration $w=z_1 z_2 -\epsilon$ }\label{fig:pertc2}
\end{center}
\end{figure}

Hamiltonian chords for a multiple of linear Hamiltonian $wH$ can be parametrized by $C_{a,b}$ for lattice points with $|a| \leq a_w$ and $|b| \leq b_w$ where $a_w,b_w \to \infty$ as $w \to \infty$. In Figure \ref{fig:pertc2}, the intersection points in $G \cap \phi_{wH} (G)$ corresponding to these generators are plotted (we will mostly describe disks with these intersection points below, as they are easier to visualize).
Each of them produces a degree $0$ exact generator denoted as $\widetilde{C}_{a,b}$ in the $m_1$-cohomology, and in particular, $H^\ast (\hom_{\mathcal{W}^\Lambda} (G,G))$ is simply an algebra without higher operations, concentrated at degree $0$. The product $m_2$ is given by
\begin{equation}\label{eqn:m2fora1}
m_2 (\widetilde{C}_{a_1,b_1}, \widetilde{C}_{a_2,b_2}) = \sum^{k}_{i=0} {k\choose i} \widetilde{C}_{a_1 + a_2, b_1 + b_2 + i} 
\end{equation}
where $k$ is given by
\begin{equation*}
k= \left\{ 
\begin{array}{cl}
\min \{|a_1|,|a_2|\}  & \textnormal{if $a_1$ and $a_2$ have different signs}  \\
0 & \textnormal{otherwise.}
\end{array}
\right.
\end{equation*}
Hence it is commutative.
It is elementary to deduce from \eqref{eqn:m2fora1} that two elements $\widetilde{C}_{\pm1,0}$ generate $H^\ast (\hom_{\mathcal{W}^\Lambda} (G,G))$ as an algebra, or it will be more clear after we compare this with $A_L$ via the Koszul map $\kappa$.

\begin{figure}[h]
\begin{center}
\includegraphics[height=3in]{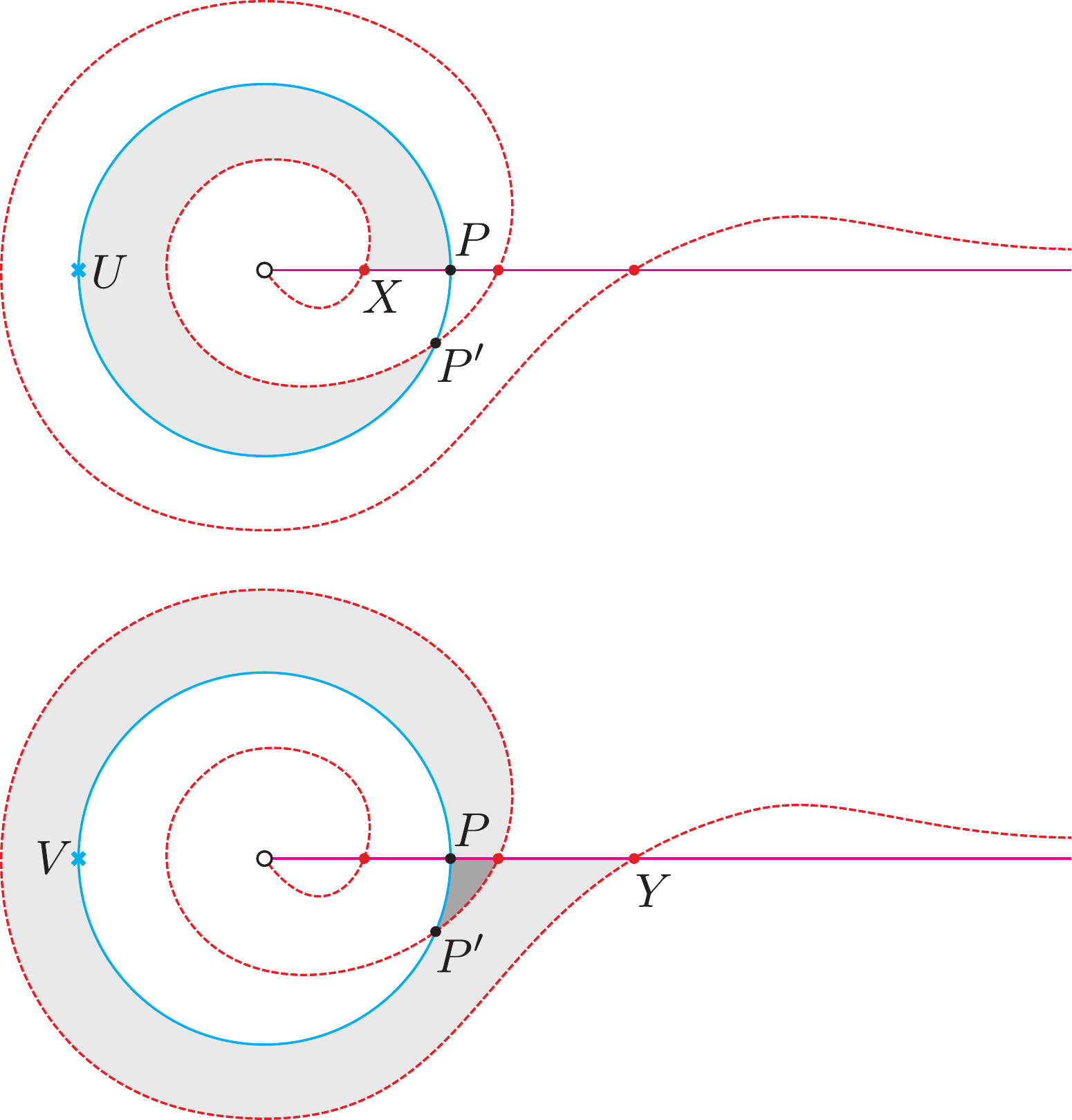}
\caption{Contributing polygons to $\kappa (\widetilde{C}_{\pm1,0})$}\label{fig:A1}
\end{center}
\end{figure}

Let us now consider $\kappa : H^0 (\hom_{\mathcal{W}^\Lambda} (G,G)) \to A_L$.
Shaded regions in Figure \ref{fig:A1} show holomorphic sections of the conic fibration that contribute to $\kappa(\widetilde{C}_{\pm1,0})$. (In the figure, $G$ is  wrapped minimal number of times to show the generators $C_{\pm 1,0}$ as intersection points.) This implies $\kappa(\widetilde{C}_{-1,0}) = \widetilde{u}$ and $\kappa(\widetilde{C}_{1,0})=\widetilde{v}.$ The outputs do not involve any nontrivial powers of $T$'s, since we work with exact generators and variables. The fact that $A_L$ is commutative can be deduced from this without actually solving the Maurer-Cartan equation for $L$.

Since $\kappa$ is a ring homomorphism, we can compute $\kappa(\widetilde{C}_{a,b})$ for other $a,b \in \Z$ using \eqref{eqn:m2fora1}. For example, one has
$$ \kappa(\widetilde{C}_{0,1}) = \tilde{u} \tilde{v} -1$$
from $m_2 (\widetilde{C}_{-1,0}, \widetilde{C}_{1,0} ) = \widetilde{C}_{0,0} + \widetilde{C}_{0,1}$.
The general formula is given by
\begin{equation}\label{eqn:kappacab}
\kappa(\widetilde{C}_{a,b}) = \left\{
\begin{array}{cl}
(\widetilde{u} \widetilde{v}-1)^{b} \widetilde{u}^{|a|} & a <0 \\ 
(\widetilde{u} \widetilde{v}-1)^{b} \widetilde{v}^{|a|} & a \geq 0
\end{array}
\right.
\end{equation}

Inspired by the formula \eqref{eqn:kappacab}, we set $\mathbf{U} = \tilde{C}_{-1,0}$ and $\mathbf{V} = \tilde{C}_{1,0}$, and it gives rise to a simpler and more familiar presentation $H^\ast(\hom_{\mathcal{W}^\Lambda} (G,G)) = \Lambda [ \mathbf{U},\mathbf{V}, \mathbf{U} \mathbf{V}- 1]$ (the same expression also appears in \cite{Pas_CY}).  Namely, it corresponds to the function ring of $\check{M}:=\Lambda^2 \setminus \{\mathbf{U} \mathbf{V}=1\}$. Most importantly, $\mathbf{U}$ and $\mathbf{V}$ can be thought of as  \emph{global} coordinates on $\check{M}$. The crucial reason behind it is the finiteness of the $A_\infty$-operations on wrapped Floer theory.

To the contrary, recall that the Maurer-Cartan algebra of $L$ is given as $A_L = \frac{\Lambda \{\!\{u,v \}\!\} }{ uv = vu}$ which allows an infinite sum of monomials in $u,v$ as long as the valuation is bounded below. Thus $A_L$ is the function ring of $\left(\Lambda_+ \right)^2$. (The maximal subset of $\Lambda^2$ on which every elements in $A_L$ can be evaluated is $\left(\Lambda_+ \right)^2$.) In other words, the Maurer-Cartan space $\mathcal{MC} (L)$ (dual to $A_L$)  is isomorphic to $\left(\Lambda_+ \right)^2$ with coordinates $u$ and $v$. This is along the same line as the coefficients for the bounding cochains in \cite{FOOO}. 

Notice that $\mathcal{MC} (L)$ is bigger than a formal neighborhood which is already assured by Theorem \ref{thm:mainkd}. 
Let us examine how the local chart $\mathcal{MC} (L)$ sits in the global mirror $\check{M}=\Lambda^2 \setminus \{\mathbf{U}\mathbf{V}=1\}$. First of all, the chart is around the origin $(\mathbf{U},\mathbf{V})=(0,0)$ since their augmentation values are zero.
Recall that the exact variables $\tilde{u} = T^{-\mathcal{A} (U)} u$ and  $\tilde{v} = T^{ -\mathcal{A} (V)} v$ precisely match with the exact generators $\mathbf{U}$ and $\mathbf{V}$ in $H^0 ( \hom_{\mathcal{W}^\Lambda} (G,G))$ under $\kappa$. The constraint that $val(u) >0, val(v)>0$ (i.e., $(u,v) \in \left(\Lambda_+ \right)^2$) translated into the condition $val(\mathbf{U}) > -\mathcal{A} (U)$, $val(\mathbf{V}) >  -\mathcal{A} (V)$ where we view $(\mathbf{U}, \mathbf{V})$ as global coordinates on $\check{M}=\Lambda^2 \setminus \{\mathbf{U}\mathbf{V}=1\}$. We see that $\mathcal{MC} (L)$ embeds into $\check{M}$ as the polydisk 
$$\mathcal{U}_L:=\{ (\mathbf{U}, \mathbf{V}) : ||\mathbf{U}|| < e^{ \mathcal{A} (U)}, ||\mathbf{V}|| < e^{ \mathcal{A} (V)} \}.$$ 

\begin{figure}[h]
\begin{center}
\includegraphics[height=2.4in]{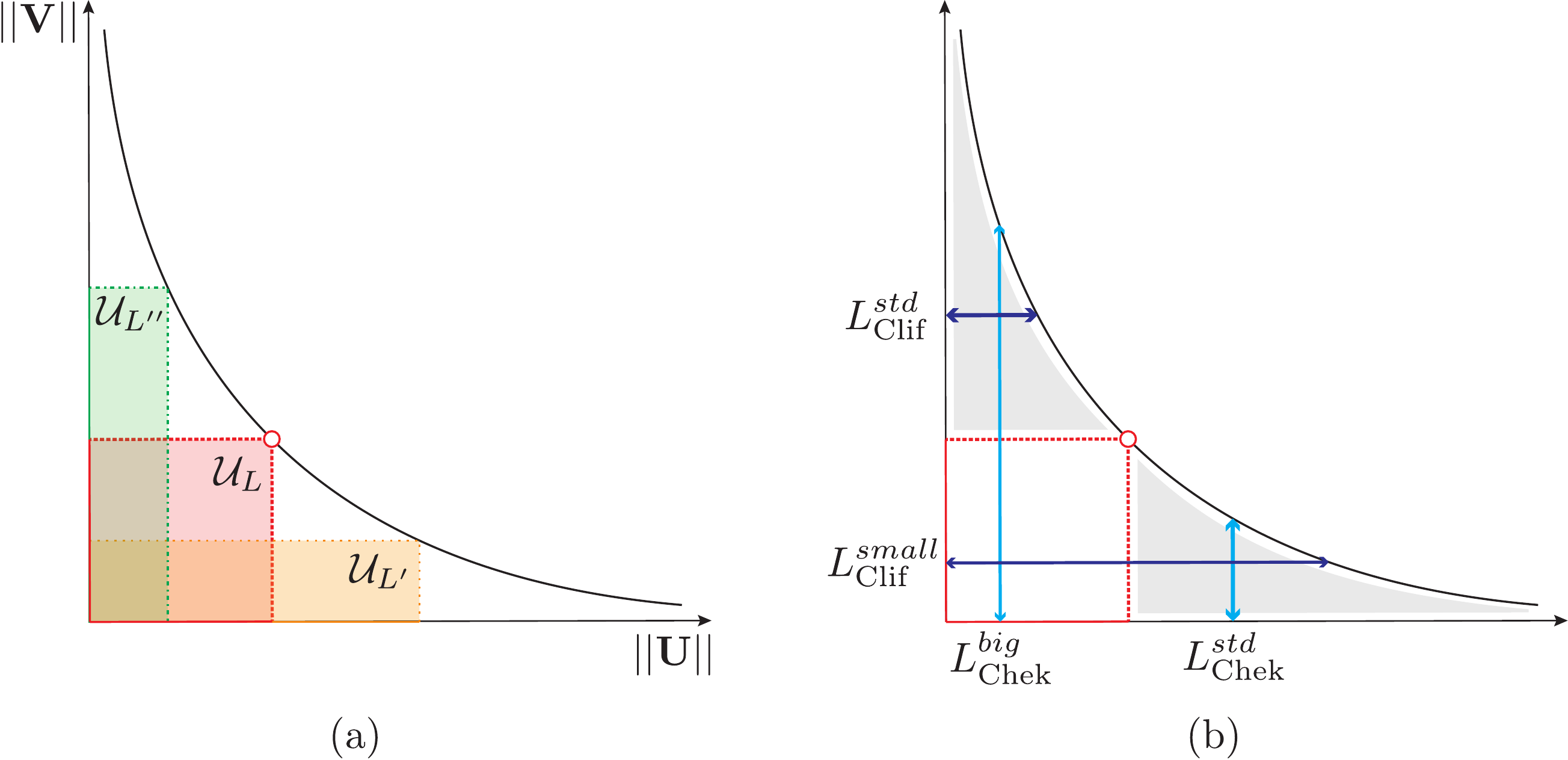}
\caption{Regions in the mirror space occupied by (a) immersed Lagrangian spheres and (b) Lagrangian tori}\label{fig:region_s2}
\end{center}
\end{figure}

One can vary $L$ by choosing non-concentric circle about the origin, while still enclosing the origin and passing through $-\epsilon$. Such a deformation is certainly not a Hamiltonian isotopy, and hence produce non-isomorphic objects in the Fukaya category. We investigate the change of $\mathcal{MC} (L)$ in this case.
We may assume $\mathcal{A} (U) = \mathcal{A} (V)=0$ for the concentric $L$, which can be achieved by adjusting $\epsilon$ if necessary. $\mathcal{U}_L$ is then given as the polydisk with each factor having radius $1$. 

Take an immersed sphere $L'$ obtained by taking a circle smaller than $|w| = \epsilon$, and write $U'$ and $V'$ for the corresponding immersed generator for $L'$. Stokes formula implies that
$\mathcal{A} (U') > 0$ and $\mathcal{A} (V') < 0$. Therefore $\mathcal{MC} (L')$ is embedded in $\Lambda^2 \setminus \{ \mathbf{U} \mathbf{V} = 1\}$ as a polydisk $\mathcal{U}_{L'}$ with radius of $\mathbf{U}$-factor bigger than $e^{-0} =1$, and that of $\mathbf{V}$-factor less than $1$.
On the other hand, an immersed sphere $L''$ with sitting over a circle in $w$-plane with a bigger enclosed area satisfy $\mathcal{A} (U'') < 0$ and $\mathcal{A} (V'') > 0$. In Figure \ref{fig:region_s2} (a), we depict three different charts $\mathcal{U}_{L}$, $\mathcal{U}_{L'}$ and $\mathcal{U}_{L''}$ by plotting the norm $(x,y)=(||\mathbf{U}||, ||\mathbf{V}||)=(e^{-val (\mathbf{U})},e^{-val (\mathbf{V})} )$. One corner of each chart moves along the curve $ab=1$ since the valuations of the two immersed generators add up to zero in any case.

\begin{figure}[h]
\begin{center}
\includegraphics[height=2.5in]{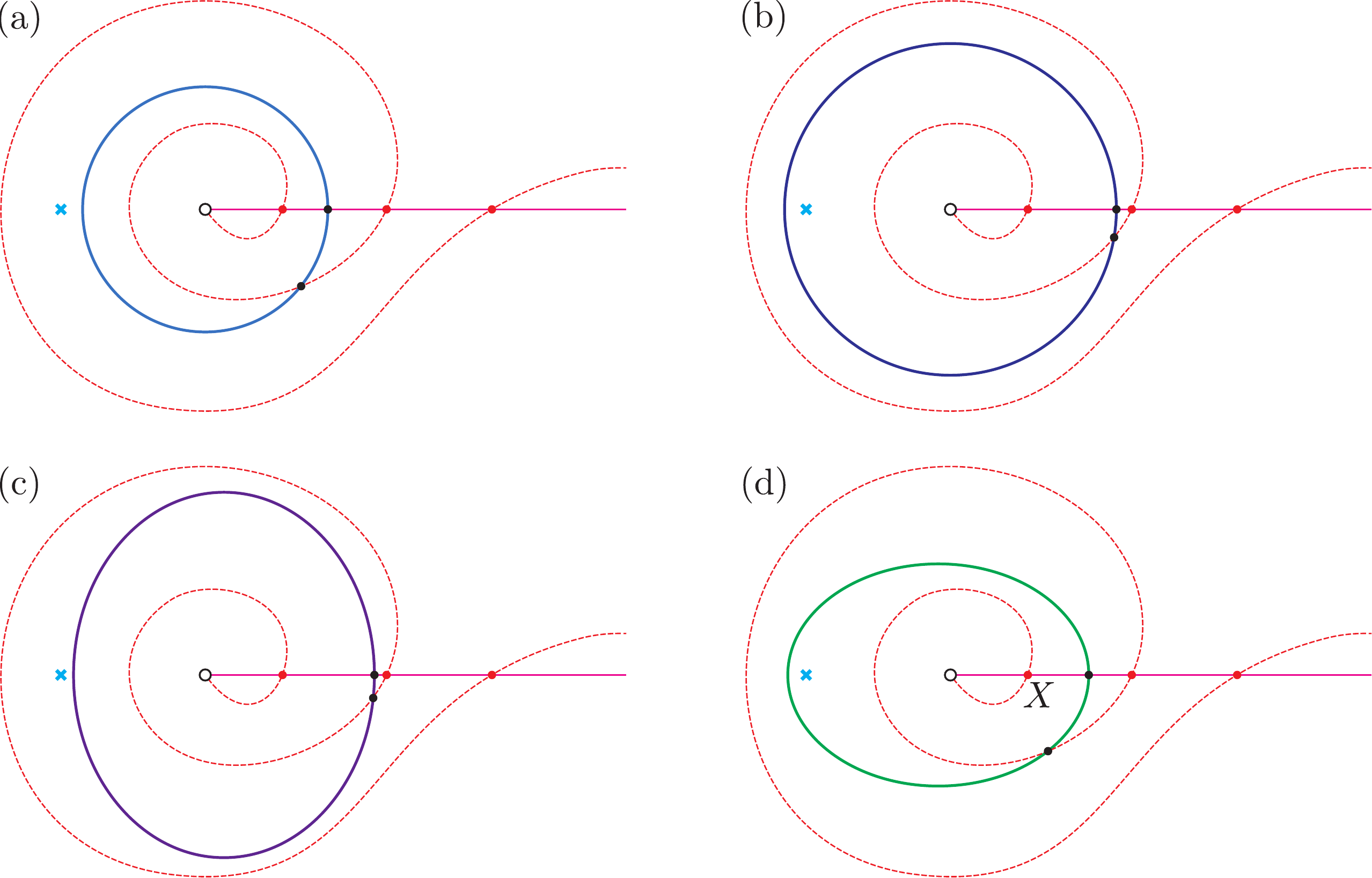}
\caption{Four different types of tori in $M$ distinguished by wall-crossing and valuations: (a) $L_{Chek}^{std}$, (b) $L_{Clif}^{std}$, (c) $L_{Chek}^{big}$ and (d) $L_{Clif}^{small}$}\label{fig:fourtori}
\end{center}
\end{figure}

We next turn our attention to various Lagrangian tori in $M$. We will only consider the tori sitting over circles in $w$-plane with height $|z_1|^2- |z_2|^2=0$.
 These can be divided into four crucially different families of Lagrangian tori, which can be distinguished from each other by their locations relative to wall (or that of $L$) and ``sizes" of their projections relative to that of $L$. The original fibers of the fibration \eqref{eqn:i1syzf} already has Chekanov and Clifford tori, denoted by $L_{Chek}^{std}$ and $L_{Clif}^{std}$ in different sides of the wall. In addition, one can consider a Chekanov torus $L_{Chek}^{big}$ which lies above a circle in $w$-plane bigger than $|w|=\epsilon$, or a Clifford torus $L_{Clif}^{small}$ sitting over a circle smaller than $|w|=\epsilon$.
Notice that the latter two can be Hamiltonian isotopic to none of fibers of \eqref{eqn:i1syzf}. Figure \ref{fig:fourtori} shows typical members of these four different families. 

Let us equip the tori with $\Lambda_{\mathrm{U}}$-connections parametrized by their corresponding holonomies $(z_1,z_2)$. Here, $z_1$ is the holonomy along    a circle in the conic fiber, and $z_2$ is that of a circle in $|w|$-plane. (See \cite{HKL} for more details.)
One can compute $\kappa$ for these tori by directly counting holomorphic sections with appropriate boundary conditions. Alternatively, we may use the fact that $\kappa$ is invariant under the quasi-isomorphism in the Fukaya category, since the construction of $\kappa$ is purely categorical. In either way, one has, for $L_{Chek}$,
\begin{equation*}
\left\{
\begin{array}{l}
z_1 =  \widetilde{C}_{0,1} = \mathbf{U}\mathbf{V} -1 \\
z_2 = \widetilde{C}_{-1,0} = T^{a}\mathbf{U}
\end{array}
\right.
\Rightarrow  
%L_{Chek}^{big}:
%\left\{
%\begin{array}{l}
%z_1 =\widetilde{C}_{0,1} = \mathbf{U}\mathbf{V} -1\\
%z_2 = \widetilde{C}_{-1,0} = T^{-a'} \mathbf{U}
%\end{array}
%\right.
\mathbf{U} = T^{-a} z_2 \in T^{- a}  \Lambda_{\mathrm{U}},
\,\, \mathbf{V} = T^{a} (z_1 +1) \in
\left\{
\begin{array}{ll}
T^{a} \Lambda_+ & z_1 \in -1 + \Lambda_+  \\
T^{a} \Lambda_{\mathrm{U}} & \textnormal{otherwise}
\end{array}
\right.
%\left\{
%\begin{array}{l}
%\mathbf{U} = T^{a} z_2 \in T^{a}  \Lambda_{\mathrm{U}} \\
%\mathbf{V} = T^{-a} (z_1 +1)
%\end{array}
%\right.
\end{equation*}
for some $a$ such that $a  < 0$ for $L_{Chek}^{std}$  and $a>0$ for $L_{Chek}^{big}$. Therefore, $e^{-val (\mathbf{U})} = e^a$ and  $0< e^{- val (\mathbf{V} ) }\leq e^{-a}$, which draws vertical lines below the graph of $xy=1$ in Figure \ref{fig:region_s2} (b). The vertical line moves closer to the $y$-axis as the size of the $w$-projection of the Chekanov torus becomes bigger. Likewise, plotting Clifford tori in $(x,y)$-planes give horizontal lines left to $xy=1$ in Figure \ref{fig:region_s2} (b). Lagrangian tori with $|z_1|^2 - |z_2|^2 \neq 0$ are located in the region above the graph of $xy=1$.

\subsection{A pair-of-pants}\label{subsec:popex1}

Let $M$ be a pair-of-pants. $M$ admits a torus fibration with the ``Y"-shaped singular fiber (the Lagrangian skeleton depicted in the left of Figure \ref{fig:pop}), whose Floer theory cannot be defined. Thus, instead of considering the SYZ mirror, we look into the wrapped Floer theory of three noncompact Lagrangian $G_1$, $G_2$ and $G_3$ shown in Figure \ref{fig:pop}. Any two of them already generate the wrapped Fukaya category. The endomorphism algebra of $\oplus G_i$ is built upon the path algebra of a quiver with more than one vertex, and hence is noncommutative. It also has nonzero degree components, so it is hard to extract the conventional mirror out of it. 

We take the following alternative approach. let us first explain the wrapped cohomology for each of $G_i$'s. Wrapping $G_1$ around two punctures it asymptotes, we obtain a similar picture as what we have seen for $T^\ast S^1$, except that the third puncture appears in the middle of the cylinder. 
As the slope $w$ grows, we obtain generators 
%If we denote the intersection points between $G_1$ and its perturbation $\phi_{wH} (G_1)$ by 
$U_0, U_1,U_2,\cdots$ and $V_0, V_1,V_2,\cdots$ (with $U_0=V_0$ being the unit) of $HW(G_1,G_1)$ whose corresponding intersection points (in $G_1 \cap \phi_{wH} (G_1)$)  indicated in Figure \ref{fig:pop}. Exact generators of $HW(G_1,G_1)$ induced from $U_i$ and $V_i$ have degree $0$, and satisfy
$$ m_2(\tilde{U}_i, \tilde{U}_j ) = \tilde{U}_{i+j},\quad  m_2(\tilde{V}_i, \tilde{V}_j ) = \tilde{V}_{i+j}, \quad m_2(\tilde{U}_i, \tilde{V}_j ) =0$$
for $i,j \geq 1$ (see \cite{AAEKO} for more details). Setting $\mathbf{U}:=\tilde{U}_1$ and $\mathbf{V}:=\tilde{V}_1$, we see that the wrapped Floer cohomology of $G_1$ can be identified with
\begin{equation}\label{eqn:hompopg1}
HW (G_1,G_1;\Bbbk) = \dfrac{\Bbbk [\mathbf{U},\mathbf{V}] }{ \langle \mathbf{U} \mathbf{V} \rangle }
\end{equation}
where $\Bbbk$ could be $\C$ or $\Lambda$ depending on which wrapped Fukaya category we are dealing with. Thus the corresponding space is the union of two coordinate axes in $\Bbbk^2$, or $\{ (\mathbf{U},\mathbf{V}) \in \Bbbk^2 \mid \mathbf{U} \mathbf{V} = 0\}$. 

\begin{figure}[h]
\begin{center}
\includegraphics[height=2.1in]{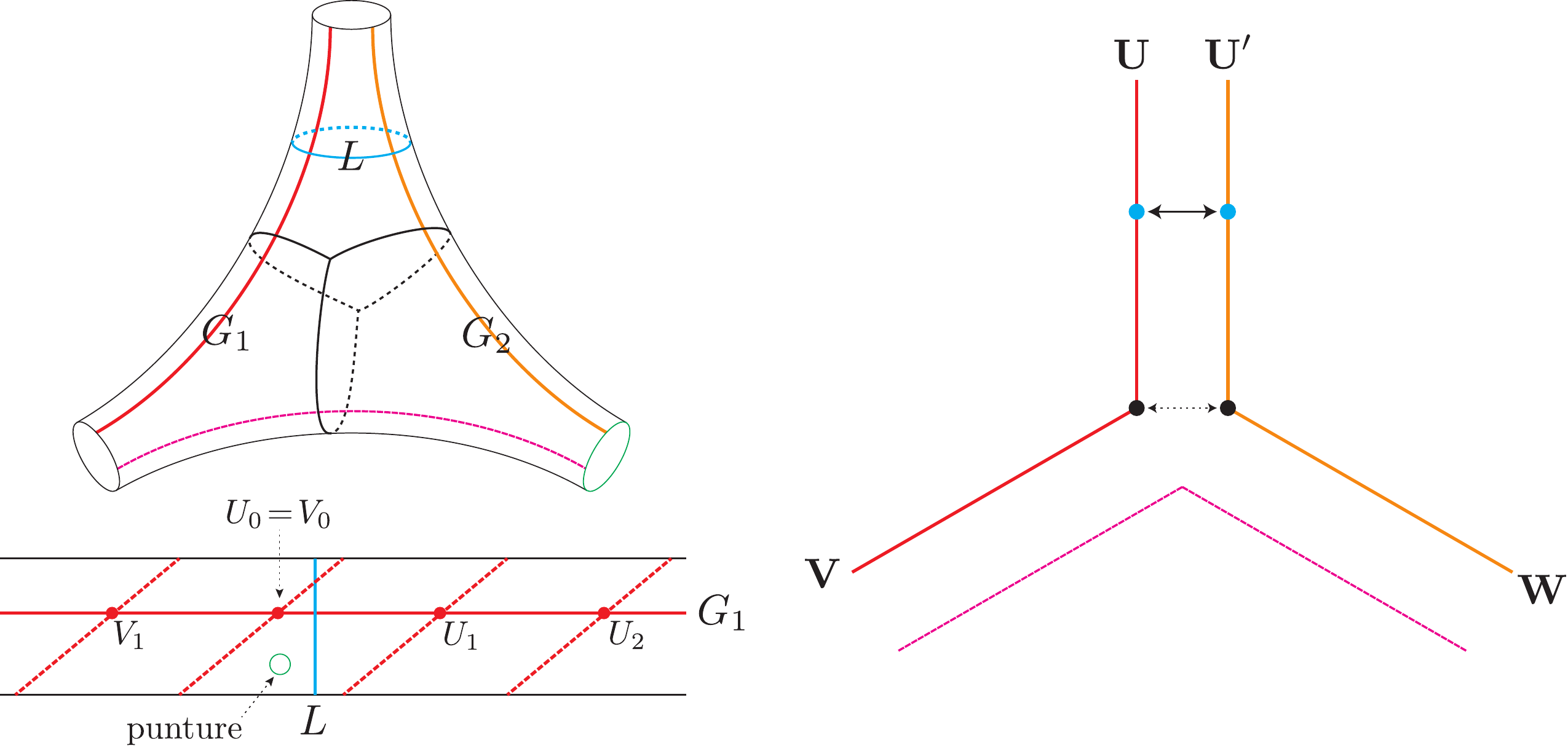}
\caption{Noncompact Lagrangians $G_1$ and $G_2$ and their endomorphism algebras interacting with the Maurer-Cartan algebra of the circle $L$}\label{fig:pop}
\end{center}
\end{figure}

We then consider the circle $L$ in Figure \ref{fig:pop} and its deformation by $x d\theta$. Let us assume for simplicity that $L$ is exact so as not to have any nontrivial powers of $T$'s in the formula below. By direct counting, the kernel $\mathfrak{m}$ of the resulting augmentation  on \eqref{eqn:hompopg1} is the ideal generated by $\mathbf{U}-1$ and $\mathbf{V}$ (or the point corresponding to $L$ is $(\mathbf{U},\mathbf{V}) = (1,0)$). In particular, $\mathbf{V}$ does not survive in the $\mathfrak{m}$-adic completion of \ref{eqn:hompopg1} with respect to this ideal, since $\mathbf{V} = (-1)^n (\mathbf{U}-1)^n \mathbf{V}$ for any $n$. 

Given the locations of $L$ and $G_1$ in $M$, one can show by the same calculation as in Subsection \ref{subsec:ts1exfirst} that $\kappa_{G_1,L} ( \mathbf{U}) = e^x$. Also, we have $\kappa_{G_1,L} ( \mathbf{V}) = 0$ since there are no disks to count. 
This does not violate Theorem \ref{thm:mainkd} (note that $G_1$ generates $L$) %Although $\mathbf{V}$ lies in the kernel of $\kappa$, 
as $\mathbf{V}$ becomes zero in the completion.
To summarize the Maurer-Cartan space of $L$ lies in \eqref{eqn:hompopg1} via 
$$\kappa_{G_1,L} ( \mathbf{U}) = e^x  \quad \textnormal{and} \quad \kappa_{G_1,L} ( \mathbf{V}) = 0.$$ 
%Although $\mathbf{V}$ lies in the kernel of $\kappa$, it becomes zero in the completion. Thus it does not violate Theorem \ref{thm:mainkd}. (Note that $G_1$ generates $L$.) 
Namely, $A_L = \Lambda\{x\}$ is isomorphic to the subspace of the $\mathfrak{m}$-adic completion of \eqref{eqn:hompopg1} consisting of series with bounded coefficients. $\mathcal{MC} (L) \cong \Lambda_+$ sits in the $\mathbf{U}$-axis ($\subset \{ (\mathbf{U},\mathbf{V}) \in \Bbbk^2 \mid \mathbf{U} \mathbf{V} = 0\}$) as the unit open disk around $(\mathbf{U},\mathbf{V}) = (1,0)$.

On the other hand, exactly the same procedure applying to the pair $(G_2,L)$ identifies the Maurer-Cartan space of $L$ as a subset of the coordinate axis $\mathbf{W}=0$ via 
$$\kappa_{G_2,L} (\mathbf{U}') = e^x \quad \textnormal{and} \quad \kappa_{G_2,L} (\mathbf{W}) = 0,$$ 
where we take the presentation $ \Bbbk [\mathbf{U}',\mathbf{W}] / \langle \mathbf{U}' \mathbf{W} \rangle$ of the wrapped Floer cohomology of $G_2$ as before. (Here, $\mathbf{U}'$ is the exact generator appearing closer to the same puncture as $\mathbf{U}$ for $G_1$.)
Repeating the same computation for other circles in the same leg of $M$, we conclude that in view of Lagrangian moduli, the two coordinate axes $\{ (\mathbf{U},0) \mid \mathbf{U} \in \Bbbk^\ast \}$ (from $G_1$) and $\{ (\mathbf{U}',0) \mid \mathbf{U} \in \Bbbk^\ast \}$ (from $G_2$) should be identified by letting $(\mathbf{U},0) = (\mathbf{U}',0)$, as indicated in Figure \ref{fig:pop}. 

\begin{figure}[h]
\begin{center}
\includegraphics[height=1.9in]{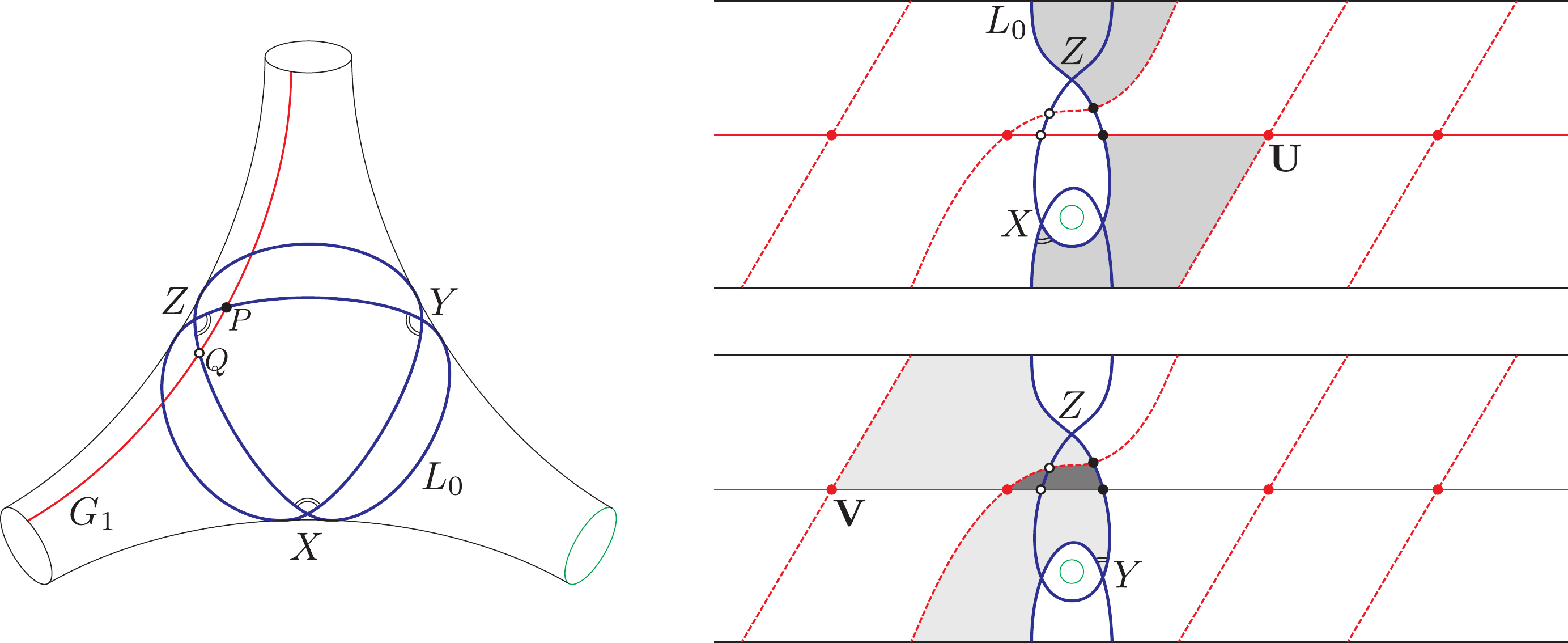}
\caption{The Maurer-Cartan algebra of the Seidel Lagrangian and the endomorphism algebra of $G_1$}\label{fig:seilag}
\end{center}
\end{figure}

Observe that the origin $\mathbf{U}= \mathbf{V}=0$ cannot be covered by any of such circles $L$, since the exponential $e^x$ is always nonzero. We next seek for the object in the Fukaya category that should correspond to the origin. Let us consider the immersed circle $L_0$ with three self-intersection points $X,Y$ and $Z$ described in Figure \ref{fig:seilag}. This immersed Lagrangian has been used to study homological mirror symmetry of surfaces in many literatures, since first introduced by Seidel \cite{Seidel-g2}. 

Let $X,Y,Z$ and $X',Y',Z'$ denote degree $0$ and $1$ generators, respectively.
Observe that for $\omega = d \Theta$, 
\begin{equation}\label{eqn:frontbackcancel}
\int_{L_0} \Theta = \int_{ \partial \Delta_F} \Theta - \int_{\partial \Delta_B} \Theta = \int_{\Delta_F} \omega - \int_{\Delta_B} \omega
\end{equation}
where $\Delta_F$ and $\Delta_B$ are two triangles bounded by $L_0$ with corners $X,Y$ and $Z$ and their boundaries are positively oriented.
By imposing the reflection symmetry on $L_0$ so that \eqref{eqn:frontbackcancel} vanishes,  $L_0$ can be made exact, or $\iota^\ast \Theta = df$ for some function $f$ on the domain $S^1$ of the immersion $\iota : S^1 \looparrowright L_0$. Thus each immersed generator naturally comes with its action $\mathcal{A}$, and one can assign exact generators to $X,Y,Z,X',Y',Z'$ by multiplying $T^{\mathcal{A} ( -)}$ (see \eqref{eqn:actionforimm}). The Maurer-Cartan equation for $L_0$ in terms of these exact generators is given as
\begin{equation}\label{eqn:mcforpop}
m_0(e^b) = (\tilde{y}\tilde{z}-\tilde{z} \tilde{y}) \widetilde{X}' +  (\tilde{z}\tilde{x}-\tilde{x} \tilde{z}) \widetilde{Y}'+  (\tilde{x}\tilde{y}-\tilde{y} \tilde{x}) \widetilde{Z}'  + \tilde{x}\tilde{y}\tilde{z} \one_{L_0} 
\end{equation}
where $b= \tilde{x} \WT{X} + \tilde{y} \WT{Y} + \tilde{z} \WT{Z}(=xX + yY+zZ)$. (For instance, $\WT{X} = T^{\mathcal{A} (X)} X$ and $\tilde{x} = T^{-\mathcal{A} (X)} x$.) Notice that there is no nontrivial power of $T$ appearing in \eqref{eqn:mcforpop}, thanks to the usage of exact generators. 

\begin{remark}\label{rmk:lgmirrorpop}
Strictly speaking, $L_0$ does not completely fit into our setting in Section \ref{sec:koszulpairmain} since it is only $\Z_2$-graded, and a lower (de Rham) degree term such as $\tilde{x}\tilde{y} \tilde{z} \one_{L_0}$ appears in the Maurer-Cartan equation \eqref{eqn:mcforpop}. To keep the $\Z$-graded setting, one could introduce an extra variable ``$e$" with $\deg e=2$ encoding Maslov indices, which turns the last term  in \eqref{eqn:mcforpop} to $\tilde{x}\tilde{y}\tilde{z} e \one_{L_0}$.

On the other hand, the coefficient $W:=\tilde{x}\tilde{y} \tilde{z} = T^{-\mathcal{A}(X) -\mathcal{A}(Y) -\mathcal{A} (Z) } xyz$  is called the potential function defined on the weak Maurer-Cartan space of $L_0$, which give a local Landau-Ginzburg mirror. To deal with Lagrangians more systematically, one may need a generalization of $\kappa$ into weak Maurer-Cartan deformations, which we leave for future research. 
\end{remark}

Therefore the relations are generated by commutators among variables together with $xyz$, and hence the (strict) Maurer-Cartan algebra in $\Lambda$-coefficient can be represented as
$$ A_{L_0} = \dfrac{\Lambda\{\!\{ x,y,z \}\!\} }{ \langle \langle [x,y], [y,z], [z,x], xyz \rangle \rangle }.$$
The associated space $\mathcal{MC} (L_0)$ is given by $\{xyz=0\} \subset (\Lambda_+)^3$.
One can also use $\C$-coefficient, in which case the corresponding Maurer-Cartan deformation of $L_0$ gives a formal neighborhood the origin in the union of three coordinate planes in $\C^3$.

%Strictly speaking $L_0$ does not fit into our setting in Section \ref{sec:koszulpairmain} in two aspects: (i) it is only $\Z_2$-graded, and (ii) 

Another feature of $L_0$ slightly off the setting in \ref{sec:koszulpairmain} is that
$G_i$ and $L_0$ intersect at two points, say $P$ of degree 0 and $Q$ of degree $1$. 
Despite the existence of an additional intersection point $Q$, the map $\kappa_{G_i,L_0}$ defined by the precisely same formula \eqref{eqn:formulakoszul11} still produces an algebra homomorphism modulo the ideal $\langle r_{PQ} (x,y,z), r_{QP} (x,y,z) \rangle$ where $r_{PQ}$ and $r_{QP}$ are given by  $m_1^{0,b} (Q) = r_{QP} (x,y,z) P$ nad $m_1^{0,b} (P) = r_{PQ} (x,y,z) Q$. To see this, observe that the $A_\infty$-relation
\begin{equation*}
\begin{array}{l}
m ( \overbrace{m(Z_1,Z_2,P, e^b)}^{\textnormal{multiple of}\,\,Q},e^b) + m(m_2(Z_1,Z_2),P,e^b) \\
- m (Z_1, m(Z_2,P,e^b),e^b) + m(Z_1,Z_2, m_1^{0,b} (P), e^b) =0
\end{array}
\end{equation*}
for $Z_1,Z_2 \in \hom_{\mathcal{W}^\Lambda} (G_i,G_i)$ reduces to
$$\kappa(m_2(Z_1,Z_2)) = \kappa(Z_1) \kappa(Z_2) \mod  \langle r_{PQ}, r_{QP} \rangle$$
since the first and the last terms are multiple of $r_{QP}$ and $r_{PQ}$, respectively.
It is easy to see that $r_{PQ} (x,y,z)=z$ and $r_{PQ} (x,y,z) = xy$ up to scaling in this case. 

\begin{remark}
The pair $(r_{PQ} , r_{QP})$ gives the matrix factorization of $W$ mirror dual to $G_1$ in view of the Landau-Ginzburg mirror of $M$ mentioned in Remark \ref{rmk:lgmirrorpop}. See \cite{CHL_gl} for more details.
\end{remark}

As a result, one obtains an algebra homomorphism
\begin{equation}\label{eqn:kappapop1}
 \kappa_{G_1,L_0}: \dfrac{\Lambda [\mathbf{U},\mathbf{V}] }{\langle \mathbf{U} \mathbf{V}  \rangle} \to \dfrac{A_{L_0} }{ \langle \langle r_{PQ}, r_{QP} \rangle \rangle } \cong \dfrac{\Lambda\{\!\{ x,y\}\!\}}{ \langle \langle xy \rangle \rangle}.
\end{equation}
The homomorphism is determined by images of $\mathbf{U}$ and $\mathbf{V}$, and the shaded holomorphic disks drawn in Figure \ref{fig:seilag} shows
$$ \kappa_{G_1,L_0} (\mathbf{U}) = \tilde{x} = T^{ -\mathcal{A}(X)} x,  \quad \kappa_{G_1,L_0} (\mathbf{V}) = \tilde{y} = T^{ -\mathcal{A} (Y)} y.$$
In particular, $\kappa_{G_1,L_0}$ becomes an isomorphism after taking completion of the left hand side.
Setting $x=y=0$, \eqref{eqn:kappapop1} reduces to the augmentation $\mathbf{U},\mathbf{V} \mapsto 0$, from which one can deduce that $L_0$ (without boundary deformation) sits at the origin $(\mathbf{U},\mathbf{V})= (0,0)$. 

By the same argument, we see also that $L_0$ corresponds to the origin $(\mathbf{U'},\mathbf{W})=(0,0)$ in view of $G_2$. Accordingly, one has to identify the two ``origins". Combined with the previous identification $(\mathbf{U},0) = (\mathbf{U}',0)$ for $\mathbf{U}, \mathbf{U}' \neq 0$, we conclude that $\mathbf{U}$-axis in $\{\mathbf{U} \mathbf{V}=0\}$ from $G_1$ and $\mathbf{U}'$-axis in $\{\mathbf{U}'\mathbf{W}=0\}$ from $G_2$ should be glued together. 

The upshot is the union of three coordinate axes 
\begin{equation}\label{eqn:globalpop1}
\{(\mathbf{U},0,0) \} \cup \{(0,\mathbf{V},0) \}  \cup \{(0,0,\mathbf{W}) \}  
\end{equation}
in $\Lambda^3$, and this is obtained by keeping track of locations of point-like objects (in the Fukaya category) in the ``$\mathrm{Spec}$" of $HW(G_1,G_1)$ and $HW(G_2,G_2)$. (We could also include $HW(G_3,G_3)$, but it is redundant here.)

Putting together \eqref{eqn:kappapop1} and its analogues for $G_2,G_3$, we see that the subspace $\{(x,y,z)\in \Lambda_+^3 \mid xy=yz=zx=0\}$ of the Maurer-Cartan space $\mathcal{MC} (L_0)$ embeds into \eqref{eqn:globalpop1} via 
$$(\mathbf{U},\mathbf{V},\mathbf{W}) = (T^{-\mathcal{A}(X)} x, 0,0) \quad \textnormal{or} \quad\  (0, T^{-\mathcal{A} (Y)} y, 0) \quad \textnormal{or}\quad  (0,0, T^{-\mathcal{A} (Z)}z),$$
which describes a certain neighborhood $\mathcal{V}_{L_0}$ of the origin in \eqref{eqn:globalpop1}.
Note that $\mathcal{V}_{L_0}$ is the critical loci of the potential 
$$W:\mathcal{U}_{L_0} \to \Lambda \quad (x,y,z) \mapsto T^{-\mathcal{A}(X)-\mathcal{A} (Y) -\mathcal{A} (Z)} xyz,$$ 
and hence, it can be thought of as the set of nonzero objects $(L_0,b)$ for $b \in \mathcal{MC} (L_0)$.

\subsection{Smoothing of conifold} 
The last example is the (divisor complement of) deformed conifold discussed in Example \ref{ex:conimc}
$$M:= \{ (u_1,v_1,u_2,v_2,z) \in \C^4 \times \C^\ast \mid u_1 v_1 =z-a, u_2 v_2 =z-b\} \setminus \{ z=0\}.$$
The projection to $z$-plane defines a double conic fibration on $M$, whose fibers degenerate over $z=a$ and $z=b$. 
Near $a$ or $b$, $M$ is locally isomorphic to the product of the example in \ref{subsec:i1ex} with $T^\ast S^1$, and as before, it admits a torus fibration with fibers lying over concentric circles about the origin in $z$-plane, consisting of $T^2$-orbits for the Hamiltonian action $(u_1,v_1,u_2,v_2,z) \mapsto (e^{i \theta_1} u_1, e^{-i\theta_1} v_1, e^{i\theta_2} u_2, e^{-i\theta_2} v_2,z)$.

Take two Lagrangians $L_0$ and $L_1$ to be matching spheres lying over paths drawn in Figure \ref{fig:conifoldbase}. As in the picture, $G_i$ for $i=0,1$ is a Lagrangian section of the torus fibration which intersects $L_i$ exactly once. More details on Floer theory of these Lagrangians can be found in \cite{CPU}. We finally set $G= G_0 \oplus G_1$ and $L= L_0 \oplus L_1$.

\begin{figure}[h]
\begin{center}
\includegraphics[height=1.2in]{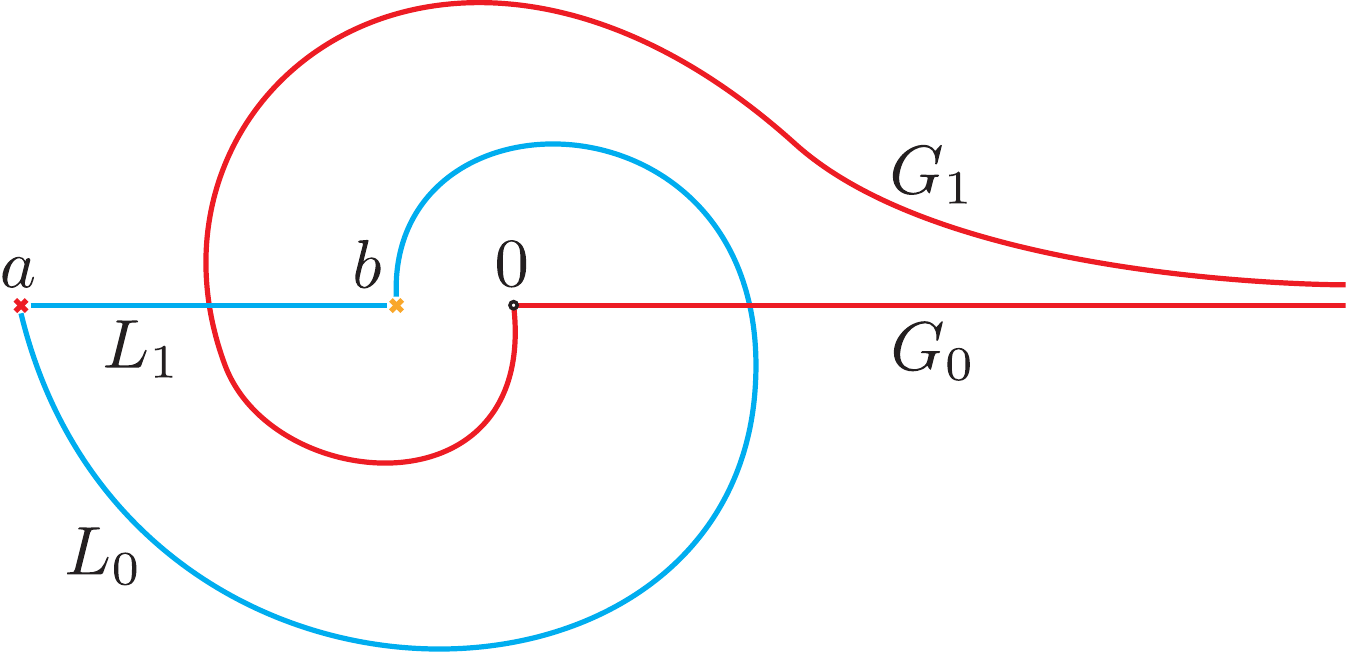}
\caption{two Lagrangian spheres $L_0$, $L_1$ and their duals}\label{fig:conifoldbase}
\end{center}
\end{figure}

The computation of $\kappa$ for $G$ and $L$ can be done in a similar manner to what we did in \ref{subsec:i1ex}, and we do not present much details. For a linear Hamiltonian $wH$ with $w \to \infty$, we obtain the family $\{P_{a,b,c}^i :a,b,c \in \Z\}$ of $wH$-hamiltonian chords from $G_i$ to itself for $i=0,1$ (equivalently, the intersection points in $G \cap \phi_H(G)$). Likewise, we have $\{Q_{a,b,c} : a \in \frac{1}{2} +\Z, b,c \in \Z \}$ spanning $\hom (G_1,G_0)$, and $\{R_{a,b,c} : a \in \frac{1}{2} +\Z, b,c \in \Z \}$ spanning $\hom (G_0,G_1)$. Thier degrees are all $0$. Analogous to the way of indexing in \ref{subsec:i1ex}, $a$ and $(b,c)$ in $P_{a,b,c}^i$ indicate the component of chords along the base and the double-conic fiber directions, respectively. $Q_{a,b,c}$ and $R_{a,b,c}$ are similarly defined, except that they lie between the two different Lagrangians. 
The locations of the intersection points corresponding to the first few generators are shown in Figure \ref{fig:kappaq10}. We denote by $\WT{P}^i_{a,b,c}$ (for $i=0,1$), $\WT{Q}_{a,b,c}$ and $\WT{R}_{a,b,c}$ the associated generators of $ H^\ast \hom(G,G)$. $\WT{P}^i_{0,0,0}$ is the unit in the corresponding component of $ H^\ast \hom(G,G)$.

In \cite{CPU}, $m_2$ between these generators has been explicitly computed. For instance,
$$ m_2 (\WT{P}^0_{a_1,b_1,c_1}, \WT{P}^0_{a_2,b_2,c_2})= \sum_{i=0}^{k_1} \sum_{j=0}^{k_2} {k_1 \choose i} {k_2 \choose j}  \WT{P}^0_{a_1+b_1,a_2+b_2+i,a_3+b_3+j}. $$
where $k= \min\{|a_1|,|a_2|\}$ if $a_1$ and $a_2$ have different signs, and $k=0$ otherwise. Also,
%$$ m_2 (\WT{R}_{a_1,b_1,c_1}, \WT{Q}_{a_2,b_2,c_2})=\sum_{i=0}^{k_1} \sum_{j=0}^{k_2} {k_1 \choose i} {k_2 \choose j} \WT{P}^0_{a_1+b_1,a_2+b_2+i,a_3+b_3+j}. $$
$$ m_2 (\WT{Q}_{a_1,b_1,c_1}, \WT{R}_{a_2,b_2,c_2})=\sum_{i=0}^{k_1} \sum_{j=0}^{k_2} {k_1 \choose i} {k_2 \choose j} \WT{P}^1_{a_1+b_1,a_2+b_2+i,a_3+b_3+j}. $$
where $k_1 = \min\{|a_1|-1/2,|a_2|-1/2\} +1$ and $k_2 = \{|a_1|-1/2,|a_2|-1/2\}$ if $a_2<0<a_1$, and $k_1 = \min\{|a_1|-1/2,|a_2|-1/2\} $ and $k_2 = \{|a_1|-1/2,|a_2|-1/2\} +1 $ if $a_1<0<a_2$ ($k_1=k_2=0$ if $a_1$ and $a_2$ have the same sign). 
Formulas for other products have similar patterns, and we omit. 
%(They can be recovered from the formula of $\kappa$, below.)

Recall from Example \ref{ex:conimc} that the Maurer-Cartan algebra $A_L$ is a quiver algebra generated by four arrows $x,y,z,w$, with relations $xyz = zyx$ and its permutations. These arrows are dual to immersed generators $X,Y,Z,W$ taken as in Figure \ref{fig:kappaq10}. ($Y$ and $W$ are complementary to $X$ and $Z$, respectively.)
\begin{figure}[h]
\begin{center}
\includegraphics[height=1.7in]{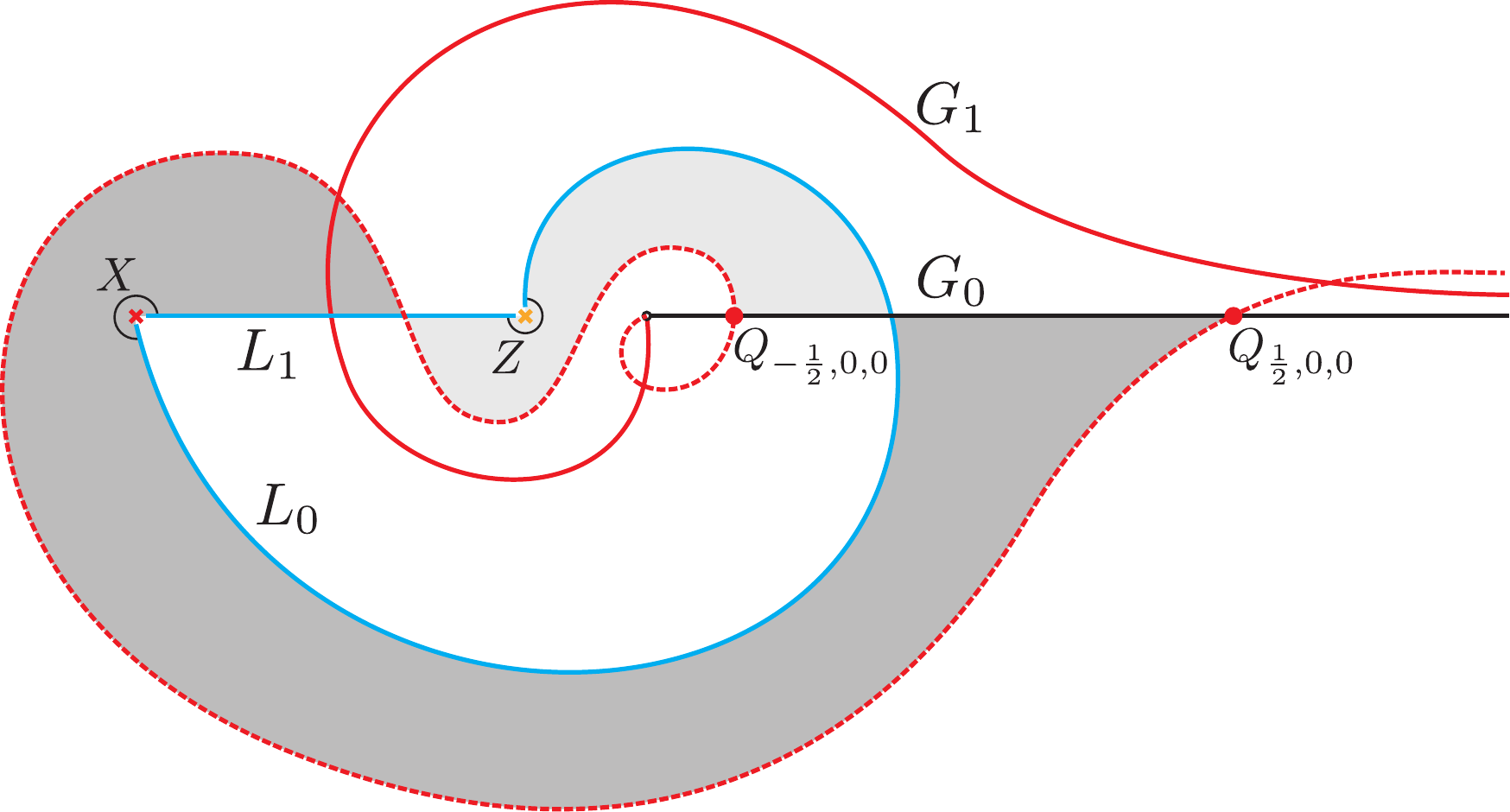}
\caption{The Koszul map $\kappa$ for generators of $\hom_{\mathcal{W}} (G,G)$}\label{fig:kappaq10}
\end{center}
\end{figure}

We can compute $\kappa : H^0 (\hom (G,G)) \to A_L$ using the same strategy as in \ref{subsec:i1ex}. Namely, we first find the images of generators of $H^0 (\hom (G,G))$ with small indices by directly counting holomorphic disks, and then apply the known formula of $m_2$ above. 

First, it is not difficult to check that $\WT{Q}_{\pm \frac{1}{2},0,0}$, $\WT{R}_{\pm \frac{1}{2},0,0}$ and $\WT{P}^i_{\pm 1,0,0}$ (for $i=0,1$) generate the algebra $H^0 (\hom(G,G))$, and hence it suffices to find their images under $\kappa$. Any contributing disks to $\kappa (\WT{Q}_{\pm \frac{1}{2},0,0})$ project to one of the shaded regions in Figure \ref{fig:kappaq10}) by the maximum principle, and each shaded region supports a unique disk since the Lagrangian boundaries lie on the same moment fiber and the fibration is trivial away from an arbitrarily small neighborhood of the double conic fiber containing $X$ or $Z$. Therefore $\kappa (\WT{Q}_{\frac{1}{2},0,0}) = \tilde{x} $ and $\kappa (\WT{Q}_{\frac{1}{2},0,0}) = \tilde{z}$. The other generators can be handled in the same manner.

The general formula for $\kappa$ is given as follows: for $0<a \in \Z$, 
$$ \kappa( \WT{P}^0_{a,b,c} ) = (\tilde{w}\tilde{x})^{a} (\tilde{y}\tilde{x}-1)^b (\tilde{w}\tilde{z}-1)^c,\quad  \kappa( \WT{P}^0_{-a,b,c} ) = (\tilde{y}\tilde{z})^{a} (\tilde{y}\tilde{x}-1)^b (\tilde{w}\tilde{z}-1)^c,$$
$$ \kappa( \WT{P}^1_{a,b,c} ) = (\tilde{x}\tilde{w})^{a} (\tilde{x}\tilde{y}-1)^b (\tilde{z}\tilde{w}-1)^c,\quad  \kappa( \WT{P}^1_{-a,b,c} ) = (\tilde{z}\tilde{y})^{a} (\tilde{x}\tilde{y}-1)^b (\tilde{z}\tilde{w}-1)^c,$$
and for $0< a \in \frac{1}{2} + \Z$,
$$ \kappa( \WT{Q}_{a,b,c} ) = \tilde{x} (\tilde{w}\tilde{x})^{a - \frac{1}{2} }  (\tilde{y}\tilde{x}-1)^b (\tilde{w}\tilde{z}-1)^c, \quad  \kappa( \WT{Q}_{-a,b,c} ) =\tilde{z} (\tilde{y}\tilde{z})^{-a +\frac{1}{2} }  (\tilde{y}\tilde{x}-1)^b (\tilde{w}\tilde{z}-1)^c,$$
$$ \kappa( \WT{R}_{a,b,c} ) = \tilde{w} (\tilde{x}\tilde{w})^{a- \frac{1}{2}} (\tilde{x}\tilde{y}-1)^b (\tilde{z}\tilde{w}-1)^c, \quad \kappa( \WT{R}_{-a,b,c} ) = \tilde{y} (\tilde{z}\tilde{y})^{-a+\frac{1}{2}} (\tilde{x}\tilde{y}-1)^b (\tilde{z}\tilde{w}-1)^c.$$
%The proof is the direct count of some simple disks from the picture followed by a tedious, but elementary algebraic computation. 
%and we leave it as an exercise for readers. 
Negative powers of a polynomial in the formula should be interpreted as series, for e.g., $(\tilde{x} \tilde{y}-1)^{-1} = - 1 - \tilde{x} \tilde{y} - \tilde{x} \tilde{y} \tilde{x} \tilde{y} - \cdots$.
Note that such infinite sums are legitimate in $A_L$. Also, the loops based at the same vertex commute in $A_L$ due to the relations, and hence the order of factors can be switched with some minor effect. For instance,
$$ \tilde{w} (\tilde{x}\tilde{w})^{a- \frac{1}{2}} (\tilde{x}\tilde{y}-1)^b (\tilde{z}\tilde{w}-1)^c = (\tilde{w}\tilde{x})^{a- \frac{1}{2}}  \tilde{w}   (\tilde{z}\tilde{w}-1)^c   (\tilde{x}\tilde{y}-1)^b$$
holds in $A_L$, and the orders of factors in the above formulas are arbitrarily chosen for convenience.
 
\bibliographystyle{amsalpha}
\bibliography{geometry}
\end{document}